\documentclass[10pt,reqno]{amsart}

\usepackage{amssymb}
\usepackage{amsfonts}
\usepackage{amsthm}
\usepackage{amsmath}
\usepackage{bbm}

\numberwithin{equation}{section}
\allowdisplaybreaks

\newtheorem{theorem}{Theorem}[section]
\newtheorem{proposition}[theorem]{Proposition}

\newtheorem{lemma}[theorem]{Lemma}
\newtheorem{corollary}[theorem]{Corollary}

\theoremstyle{definition}

\theoremstyle{remark}

\newcommand{\R}{\mathbb{R}}

\newcommand{\Z}{\mathbb{Z}}

\renewcommand{\hat}{\widehat}
\newcommand{\eps}{\varepsilon}
\newcommand{\nmin}{n_{\rm{min}}}
\newcommand{\nmax}{n_{\rm{max}}}
\newcommand{\Imin}{I_{\rm{min}}}
\newcommand{\Imed}{I_{\rm{med}}}
\newcommand{\Imax}{I_{\rm{max}}}
\newcommand{\Amin}{A_{\rm{min}}}
\newcommand{\Amed}{A_{\rm{med}}}
\newcommand{\Amax}{A_{\rm{max}}}

%script capitals

\newcommand{\scriptB}{\mathcal{B}}

\newcommand{\scriptE}{\mathcal{E}}
\newcommand{\scriptF}{\mathcal{F}}

\newcommand{\scriptQ}{\mathcal{Q}}
\newcommand{\scriptR}{\mathcal{R}}
\newcommand{\scriptS}{\mathcal{S}}
\newcommand{\scriptT}{\mathcal{T}}

\newcommand{\ctc}[1]{\,\,\text{#1}\,\,}

\DeclareMathOperator*{\supp}{supp}

\DeclareMathOperator*{\dist}{dist}

\begin{document}

\title[Restriction to rotationally symmetric hypersurfaces]{Linear and bilinear restriction to certain rotationally symmetric hypersurfaces\footnote{2010 MSC 42}}

\author{Betsy Stovall}
\address{Department of Mathematics, University of Wisconsin, Madison, WI 53706}
\email{stovall@math.wisc.edu}

\begin{abstract}  Conditional on Fourier restriction estimates for elliptic hypersurfaces, we prove optimal restriction estimates for polynomial hypersurfaces of revolution for which the defining polynomial has non-negative coefficients.  In particular, we obtain uniform--depending only on the dimension and polynomial degree--estimates for restriction with affine surface measure, slightly beyond the bilinear range. The main step in the proof of our linear result is an (unconditional) bilinear adjoint restriction estimate for pieces at different scales.
\end{abstract}

\maketitle

%%%%%%%%%%%%%%%%%%%%%%%%%%%%%%%%%%%%%%%%%%%%%%%
%%%%%%%%%%%%%%%%%%%%%%%%%%%%%%%%%%%%%%%%%%%%%%%
%%%%%%%%%%%%%%%%%%%%%%%%%%%%%%%%%%%%%%%%%%%%%%%
\section{Introduction} \label{S:intro}
%%%%%%%%%%%%%%%%%%%%%%%%%%%%%%%%%%%%%%%%%%%%%%%
%%%%%%%%%%%%%%%%%%%%%%%%%%%%%%%%%%%%%%%%%%%%%%%
%%%%%%%%%%%%%%%%%%%%%%%%%%%%%%%%%%%%%%%%%%%%%%%

Recently, there has been considerable interest (e.g.\ \cite{BakIJM94, Buschenhenke, BMV, CKZ1, CKZ2, IM, OberlinPAMS04, Sjolin}) in extending the restriction problem to degenerate hypersurfaces, that is, hypersurfaces for which one or more of the principal curvatures is allowed to vanish to some finite (or infinite) order.  It has been known for a number of years that if the hypersurface is equipped with Euclidean surface measure, the exponent pairs for which restriction phenomena are possible must depend on the `type,' or order of vanishing of the curvatures.  Affine surface measure, however, is conjectured to mitigate the effects of such degeneracies and allow for restriction theorems that are uniform over large classes of hypersurfaces.  We verify this conjecture for a class of rotationally symmetric hypersurfaces by proving that the elliptic restriction conjecture implies the restriction conjecture with affine surface measure.

Consider the hypersurface
$$
\Gamma = \{(G(\xi),\xi) : \xi \in U \subseteq \R^d\}.
$$
We say that $\Gamma$ (or $G$) is elliptic with parameters $A$, $N$, and $1 > \epsilon_0 > 0$ if $U$ is a subset of the unit ball $B$, $\|\nabla G\|_{C^N(B)} \leq A$, and the eigenvalues of $D^2G(x)$ lie in $(\epsilon_0,\epsilon_0^{-1})$ for all $x \in U$.  

The restriction conjecture for elliptic hypersurfaces is the statement that for all pairs $(p,q)$ satisfying the (restriction) admissibility condition
\begin{equation}\label{E:admissible}
\tfrac{2(d+1)}{d+2} < q \leq \infty, \qquad  q = \tfrac{dp'}{d+2},
\end{equation}
there exists $N=N_p$ such that for all parameters $A,\epsilon_0$, and all elliptic phases $\Phi$ with parameters $A,N,\epsilon_0$,
\begin{equation} \label{E:elliptic restriction} 
\bigl(\int_B |\hat f(\Phi(\xi),\xi)|^q\, d\xi\bigr)^\frac1q \lesssim \|f\|_{L^p_{t,x}(\R^{1+d})} \qquad f \in \scriptS(\R^{1+d}),
\end{equation}
where $\scriptS$ denotes the Schwartz class and the implicit constant is allowed to depend on $p,A,\epsilon_0$.  We let $\scriptR(p \to q)$ denote the statement that the restriction conjecture for elliptic hypersurfaces is valid for the exponents $p,q$.  We note that our definition of elliptic differs slightly from that in \cite{TVV}, but by a well-known argument (a partition of unity coupled with affine transformations), the corresponding restriction conjectures are easily seen to be equivalent.

In the notation above, affine surface measure on $\Gamma$ is the pushforward by $\xi \mapsto (G(\xi),\xi)$ of  
$$
\Lambda_G(\xi)\, d\xi := |\det D^2 G(\xi)|^{\frac1{d+2}}\, d\xi;
$$
more geometrically, for $\omega \in \Gamma$, it equals $|\kappa(\omega)|^{\frac1{d+2}}\, d\sigma(\omega)$, where $\kappa$ is the Gaussian curvature and $d\sigma$ is Lebesgue measure on $\Gamma$  \cite{OberlinHyp00}. Since this measure gives little weight to the `bad' flat regions of $\Gamma$, it is natural to ask whether it is possible to prove restriction estimates of the form
$$
\|\hat f (G(\xi),\xi) \|_{L^p_\xi(\R^{d};\Lambda_G)} \lesssim \|f\|_{L^q_{t,x}(\R^{1+d})}, \qquad f \in \scriptS,
$$
for $(p,q)$ satisfying the admissibility condition \eqref{E:admissible} and with the implicit constant uniform over $G$ in some reasonably large class.  Oscillation is a well-known enemy of restriction estimates--consider, for instance Sj\"olin's counter-example $(t,\sin(t^{-k})e^{-1/t})$ \cite{Sjolin}\footnote{Examples in higher dimensions may be found in \cite{CZ}.}--so it is natural to consider the affine restriction problem for $G$ a polynomial of bounded degree.  

Here we specialize somewhat more.  Let $P$ be an even polynomial on $\R$ with non-negative coefficients, and let 
$$
S_P = \{(P(|\xi|),\xi) : \xi \in \R^d\}.
$$
The following is our main result.  

\begin{theorem}\label{T:main}
Assume that the restriction conjecture $\mathcal R(p_0 \to q_0)$ holds for some admissible pair of exponents.  Then for every restriction admissible pair $(p,q)$ with $p < p_0$, if $P:\R \to \R$ is an even polynomial of degree $N$ with non-negative coefficients, the restriction estimate
\begin{equation} \label{E:restriction}
\bigl(\int |\hat f(P(|\xi|),\xi)|^q \, \Lambda_P(\xi)\, d\xi\bigr)^{\frac1q} \leq C \|f\|_{L^p_{t,x}(\R^{1+d})}
\end{equation}
holds for all $f \in L^p_{t,x}(\R^{1+d})$.  The constant $C$ depends only on $p$, $p_0$, $d$, the degree of $P$, and the constants in \eqref{E:elliptic restriction}.  
\end{theorem}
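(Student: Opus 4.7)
The plan is to pass to the dual extension formulation, dyadically decompose in $|\xi|$, rescale each annulus so that the hypothesis $\scriptR(p_0\to q_0)$ applies to the resulting uniformly elliptic pieces, and then glue the pieces together via an unconditional bilinear adjoint restriction estimate between two annuli at well--separated scales, fed into the Tao--Vargas--Vega bilinear-to-linear machinery.

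Concretely, by duality \eqref{E:restriction} is equivalent to an extension bound $\|E_P g\|_{L^{p'}} \lesssim \|g\|_{L^{q'}(\Lambda_P d\xi)}$ for the operator $E_P g(t,x) = \int g(\xi)e^{2\pi i(tP(|\xi|)+x\cdot\xi)}\Lambda_P(\xi)\,d\xi$. Writing $g = \sum_j g_j$ with $g_j$ supported in $A_j = \{|\xi|\sim 2^j\}$, we first handle each $E_j g_j := E_P g_j$ individually. Set $r_j = 2^j$, $\eta = \xi/r_j$, and apply the compatible anisotropic dilation in $(t,x)$; the rescaled phase is $\Phi_j(\eta) = P(r_j|\eta|)/P(r_j)$, an even polynomial in $|\eta|$ with non-negative coefficients satisfying $\Phi_j(\eta)\sim 1$ on $|\eta|\sim 1$. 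Because $P$ has non-negative coefficients, at the fixed scale $r_j$ a single monomial $a_{2k_j}r_j^{2k_j}$ dominates $P(r_j)$ up to a factor depending only on $N$; hence the radial and tangential eigenvalues of $D^2\Phi_j$ (namely $\Phi_j''$ and $\Phi_j'/|\eta|$, both comparable to $|\eta|^{2k_j-2}$) are bounded above and below on $|\eta|\sim 1$ by constants depending only on $d$ and $N$. Each rescaled piece is therefore elliptic with uniform parameters, and since $\Lambda_P$ is affine-invariant, $\scriptR(p_0\to q_0)$ gives the scale-invariant linear estimate
$$
\|E_j g_j\|_{L^{p_0'}(\R^{1+d})} \lesssim \|g_j\|_{L^{q_0'}(\Lambda_P d\xi)}.
$$

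The central new ingredient is an unconditional bilinear estimate: for every admissible $(p,q)$ with $p<p_0$ and every $j,k\in\Z$ with $|j-k|\geq C_0(d,N)$,
$$
\|(E_j g_j)(E_k g_k)\|_{L^{p'/2}(\R^{1+d})}^{1/2} \lesssim \|g_j\|_{L^{q'}(\Lambda_P d\xi)}^{1/2}\|g_k\|_{L^{q'}(\Lambda_P d\xi)}^{1/2}.
$$
Transversality of the normals on the two annuli is driven by their $t$-component, namely $P'(|\xi|)$; since $P'$ is strictly increasing on $[0,\infty)$ when $P$ has non-negative coefficients, two annuli at ratio $\gtrsim 2^{C_0}$ produce genuinely separated slopes and transverse tangent hyperplanes to $S_P$. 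We would establish this bilinear estimate by rescaling the larger annulus down to unit scale, partitioning the (now very thin) image of the smaller annulus into caps of comparable size, and invoking transverse bilinear adjoint restriction for elliptic pieces (Tao, Vargas, Vega, Wolff); the angular direction is controlled by the rotational symmetry.

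Combining the single-scale linear bound with the two-scale bilinear bound, the Tao--Vargas--Vega Whitney decomposition and bilinear-to-linear interpolation yield the linear extension estimate dual to \eqref{E:restriction}; the strict inequality $p<p_0$ absorbs the $\eps$-losses inherent in bilinear-to-linear reduction and in summing over the dyadic scales. The main obstacle is the bilinear step at separated scales: the transversality between the normal fields on $A_j$ and $A_k$ must be quantified uniformly in the absolute scales $2^j,2^k$ and in the coefficient pattern of $P$. Arbitrary non-negative coefficient configurations can cause the dominant monomial on $A_j$ to differ from that on $A_k$, so the local surface geometry varies significantly between the two pieces; the robustness required is precisely what convexity of $S_P$, guaranteed by the sign assumption on the coefficients, provides.
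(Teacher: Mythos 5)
Your overall skeleton (dualize, dyadically decompose in $|\xi|$, rescale each annulus to a uniformly elliptic piece and apply $\scriptR(p_0\to q_0)$, then control the interaction between different dyadic scales bilinearly and sum, absorbing losses with $p<p_0$) is the same as the paper's. But the step you treat as routine is precisely the crux, and your proposed justification of it does not work. You claim the two-scale bilinear estimate can be obtained ``by rescaling the larger annulus down to unit scale, partitioning the (now very thin) image of the smaller annulus into caps of comparable size, and invoking transverse bilinear adjoint restriction for elliptic pieces (Tao, Vargas, Vega, Wolff).'' Those theorems require \emph{both} transverse pieces to be elliptic at unit scale. After the affine renormalization that makes one annulus elliptic, the other annulus lives on a ball of radius $\sim 2^{-k_1}$ with Hessian $\sim 2^{-k_1(J-2)}$: its curvature is uniformly degenerate on its whole domain, so no sub-cap of it is unit-elliptic, and the parabolic rescaling that would fix this destroys the ellipticity of the first surface. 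Treating the thin piece as essentially flat is also not an option: as the paper points out, bilinear adjoint restriction for a transverse pair in which one surface is flat is false beyond trivial exponents when $d\geq 3$ (the elementary $L^2\times L^2\to L^2$ argument only closes the gap in $d=2$). This is why the paper must prove a new, unconditional bilinear theorem for transverse surfaces whose curvatures live at different scales (Theorem~\ref{T:bilinear}), with an explicit loss $2^{k_1(J-2)(\frac1q-\frac12+\delta)}$, via a wave packet decomposition adapted to the tall thin box $Q_R$, a Cotlar--Stein time-localization argument, and Wolff/Tao induction on scales with modified combinatorial estimates; that proof is the bulk of the article and cannot be replaced by a citation to the existing elliptic bilinear theory.

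Two further points in your write-up are tied to this gap. First, your displayed two-scale estimate at a general admissible pair $(p,q)$, $p<p_0$, cannot be unconditional: in the paper it is Corollary~\ref{C:bilin Str gen}, obtained by interpolating the unconditional $L^2\times L^2$ bilinear estimate (which only reaches $q>\tfrac{d+3}{d+1}$) with the conditional Cauchy--Schwarz consequence of $\scriptR^*(p_0\to q_0)$. Second, your estimate carries no gain in $|j-k|$, yet some decay $2^{-\delta|j-k|}$ is exactly what makes the scales summable; the paper gets it from the scale-separation gain in the bilinear Stein--Tomas-type estimate \eqref{E:bilin Str}, and then sums with a Littlewood--Paley square function (using $q\leq 4$ and $q>p$) rather than with the Tao--Vargas--Vega Whitney machinery, which is designed for combining caps of a single elliptic surface at a common scale and is not needed here. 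Without an argument producing both the degenerate-curvature bilinear input and the decay in the scale separation, the gluing step does not close.
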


In particular, the restriction estimate \eqref{E:restriction} holds in the bilinear range $p < \tfrac{2(d+3)}{d+5}$.  As pointed out in \cite[Section~5.2]{LeeRogersSeeger}, the recent Bourgain--Guth \cite{BourgainGuth} and Guth \cite{Guth} theorems and the bilinear-to-linear method of \cite{TVV} establish $\scriptR(p \to q)$ (and hence Theorem~\ref{T:main}) in a slightly better (but awkward-to-state) range.  (This argument gives us an even better range in the monomial case because the existence of an almost transitive group action allows the use of the Maurey--Nikishin--Pisier factorization theorem \cite{Pisier} as in \cite{BourgainBesicovitch}.)  For $d \geq 3$, analogues of Theorem~\ref{T:main} were previously known only in the Stein--Tomas range \cite{CKZ1, CKZ2} (these results cover somewhat more general hypersurfaces).  

We will primarily focus on restriction with affine surface measure along the scaling line $q=\tfrac{dp'}{d+2}$ because this gives essentially the strongest possible estimates for such hypersurfaces.  However, in the last section, we will show how to deduce local (i.e.\ for compact pieces of the hypersurface) estimates from results off the scaling line $q=\tfrac{dp'}{d+2}$ (such as the Bourgain--Guth theorem \cite{BourgainGuth}), as well as sharp unweighted estimates.  

It should be possible to relax the hypotheses on $P$ substantially.  Evenness guarantees smoothness of $S_P$ and the vanishing of the linear term.  Neither smoothness at zero nor rotational symmetry are essential for our proof, and variants will be discussed in the last section.  The positivity of the coefficients and vanishing of the linear term, however, reflect geometric considerations that do play an important role.  Most obviously, the hypothesis that the coefficients are nonnegative rules out negatively curved hypersurfaces, for which no sharp restriction estimates are known beyond the Stein--Tomas range \cite{LeeNeg, VargasNeg}.  More subtly, since the linear term vanishes and the coefficients are positive, we can rescale dyadic annuli in $S_P$ to uniformly elliptic hypersurfaces.  That being said, in the last section, we will give a global, but non-uniform result for polynomials $P$ with $P''(t) > 0$ for all $t > 0$.

\subsection*{Sketch of proof} 
By duality, $\scriptR(p \to q)$ is equivalent to the adjoint restriction conjecture, which we denote by $\scriptR^*(q' \to p')$.  The adjoint restriction operator is also known as the extension operator, and we will say that an exponent pair $(p,q)$ is (extension) admissible if $(q',p')$ is restriction admissible, i.e.\ if 
$$
\tfrac{2(d+1)}d < q \leq \infty, \qquad q=\tfrac{(d+2)p'}d.
$$
It will generally be clear from the context whether an `admissible' pair is restriction or extension admissible.  

Our goal is to prove that for any admissible $(p_0,q_0)$, $\scriptR^*(p_0 \to q_0)$ implies that the extension operator
$$
\scriptE_P f(t,x) = \int e^{i(tP(|\xi|) + x\xi)}f(\xi)\, d\xi
$$
satisfies 
$$
\|\Lambda_P(|\nabla|)^{1/p'} \scriptE_Pf\|_{L^q_{t,x}} \lesssim \|f\|_{L^p_\xi}, \qquad f \in \scriptS(\R^d),
$$
for all admissible $(p,q)$ with $p < p_0$, with implicit constants depending on $d$, $p$, and the degree of $P$.  

We will proceed along the following lines.  Given a polynomial $P(t) = a_1t^2+\cdots + a_{N}t^{2N}$ with the $a_i$ non-negative, we may decompose $\R$ as a union of intervals, $\R = \bigcup_{j=1}^{C_N} I_j$, such that on $I_j$, $P$ behaves like the monomial $a_{j}t^{2j}$, plus a controllable error.  By the triangle inequality, it suffices to prove a uniform restriction estimate for each annular hypersurface $\{(P(|\xi|,\xi)) : |\xi| \in I_j\}$.  By affine invariance of \eqref{E:restriction}, we may assume that $a_{j}=1$.  The essential difficulty is then encapsulated by the problem of proving restriction estimates for degenerate hypersurfaces of the form $\{(|\xi|^{2j},\xi)\}$, for $j > 1$.  

By rescaling, the restriction problem on $\{(|\xi|^{2j},\xi) : |\xi| \sim 2^k\}$ is equivalent to restriction to $\{(|\xi|^{2j},\xi) : |\xi| \sim 1\}$.  The latter is (after a partition of unity) elliptic, so we can apply our hypothesis $\mathcal R(p_0 \to q_0)$, which implies $\mathcal R(p \to q)$ by interpolation.  This leaves us to control the interaction between the dyadic annuli and then sum up the dyadic pieces.  The former we do by means of a bilinear restriction estimate for transverse hypersurfaces whose curvatures are at different scales; after that the summation is almost elementary.

\subsection*{Prior results}  
As mentioned earlier, the natural conjectural form of Theorem~\ref{T:main} is for arbitrary polynomial hypersurfaces.  This is known if $d=2$ \cite{Sjolin}.  In fact, a uniform restriction result is known for polynomial curves with affine arclength measure in all dimensions (\cite{BAJM} and the references therein).

For hypersurfaces of dimension two or more, matters seem significantly more complicated.   Carbery--Kenig--Ziesler have proved uniform restriction theorems with affine surface measure in $1+2$ dimensions for $q \leq 2$ for rotationally symmetric hypersurfaces satisfying rather weak conditions on their derivatives \cite{CKZ2} (cf.\ \cite{OberlinPAMS04}) and for arbitrary homogeneous polynomials \cite{CKZ1}.  Ikromov--M\"uller \cite{IM} have proved the sharp unweighted $L^2_\xi$ restriction estimates for hypersurfaces in $\R^3$ expressed in adapted coordinates. 

Beyond the Stein--Tomas range, very little is known about restriction to degenerate hypersurfaces.  Lee--Vargas \cite{LeeVargas} have obtained restriction estimates in the bilinear range for hypersurfaces with $k$-nonvanishing principal curvatures; this result is in a somewhat different vein because the order of vanishing in the other directions is not taken into account in \cite{LeeVargas}.  In very recent independent work of Buschenhenke--M\"uller--Vargas (which has appeared in the Ph.D.\ thesis of Buschenhenke, \cite{Buschenhenke}; a version will be submitted for publication as \cite{BMV}), the authors establish a Fourier restriction theorem for convex finite type surfaces in $\R^3$ of the form $\{(\phi_1(\xi_1)+\phi_2(\xi_2),\xi):|\xi| \leq C\}$.  Both the form of the result and the methods are different (though there are some coincidental similarities in the proofs of the bilinear results).  In particular, the authors use the measure $d\xi$ (rather than the affine surface measure) and directly prove the corresponding scaling critical estimates, which necessarily depend on the $\phi_j$, in the bilinear range, without the use of the square function.  

\subsection*{Notation}  For two nonnegative quantities $A$ and $B$, the notation $A \lesssim B$ will be used to mean $A \leq CB$ for some constant $C$ that depends only on the dimension, degree of $P$ (or on the ellipticity parameters for more general results), and exponents $p,q,p_0$, unless otherwise stated.  We will write $A \sim B$ to mean $A \lesssim B$ and $B \lesssim A$, and $A = O(B)$ to mean $|A| \lesssim |B|$.  We will define the notation $A \lessapprox B$ later on (at the beginning of the proof of Lemma~\ref{L:epsilon removal} and at the end of Section~\ref{S:induction}) since its meaning will change.  The spatial Fourier transform, which acts on functions on $\R^d$, will be denoted by $f \mapsto \hat f$ and its inverse by $g \mapsto \check g$.  The spacetime Fourier transform, which acts on functions on $\R^{1+d}$, is denoted by $\scriptF$.  To simplify exponents, we will consistently ignore the fact that $2\pi \neq 1$.  

\subsection*{Acknowledgements} This work was supported by NSF grant DMS-1266336.  The author would like to thank Shuanglin Shao, Keith Rogers, Stefan Buschenhenke, Detlef M\"uller, and Ana Vargas for enlightening conversations along the way.  She would also like to thank the anonymous referee for helpful comments on the exposition.  

%%%%%%%%%%%%%%%%%%%%%%%%%%%%%%%%%%%%%%%%%%%%%%%
%%%%%%%%%%%%%%%%%%%%%%%%%%%%%%%%%%%%%%%%%%%%%%%
%%%%%%%%%%%%%%%%%%%%%%%%%%%%%%%%%%%%%%%%%%%%%%%
\section{Bilinear restriction I:  Statement of result}
%%%%%%%%%%%%%%%%%%%%%%%%%%%%%%%%%%%%%%%%%%%%%%%
%%%%%%%%%%%%%%%%%%%%%%%%%%%%%%%%%%%%%%%%%%%%%%%
%%%%%%%%%%%%%%%%%%%%%%%%%%%%%%%%%%%%%%%%%%%%%%%

We state our bilinear restriction result in the $C^\infty$, rather than polynomial, setting.  Let $c_0 >0$ and let $N$ be sufficiently small and large, respectively, dimensional constants.  Let $1 > \epsilon_0 > 0$, let $A < \epsilon_0$, and let $g_1,g_2 \in C^\infty(B(0,c_0))$ be elliptic phases (as defined in the previous section), which also satisfy the transversality condition $|\nabla g_1(0)| \lesssim |\nabla g_2(0)| \sim 1$; thus $|\nabla g_1| \lesssim|\nabla g_2|$ throughout $B(0,c_0)$.  The reader may find it helpful to keep the model case $g_1(\xi) = |\xi|^2$, $g_2(\xi) = |\xi-e_1|^2$ in mind.

Fix $J > 2$ and a pair of integers $k_1 > k_2$.  Define phase functions
$$
h_j(\xi) := 2^{-Jk_j}g_j(2^{k_j}\xi), \qquad \xi \in B(0,c_02^{-k_j}),
$$
surfaces
$$
S_j := \{(h_j(\xi),\xi) : \xi \in B(0,c_0 2^{-k_j})\},
$$
and extension operators
$$
\scriptE_j f(t,x) := \int_{\{|\xi| < c_0 2^{-k_j}\}} e^{i(t,x) \cdot(h_j(\xi),\xi)}f(\xi) \, d\xi, \qquad j=1,2.
$$

For simplicity, we state our bilinear result when $k_2 = 0$; the general case may be obtained by scaling.
\begin{theorem}\label{T:bilinear}
For $C = C_{d,\epsilon_0,J}$ sufficiently large, and all $k_1 \geq C$, $k_2=0$, $\delta > 0$, and $2 \geq q > \frac{d+3}{d+1}$,
\begin{equation} \label{E:bilinear scale 1}
\|\mathcal E_1 f_1 \mathcal E_2 f_2\|_{L^q_{t,x}} \lesssim_{\delta,q} 2^{k_1(J-2)(\frac1q-\frac12+\delta)}\|f_1\|_{L^2_\xi}\|f_2\|_{L_\xi^2}, \qquad f_1,f_2 \in L_\xi^2.  
\end{equation}
The implicit constant is allowed to depend on $\delta,q$, as well as $d,A,\epsilon_0,J$, but not on the phases $g_1,g_2$.  
\end{theorem}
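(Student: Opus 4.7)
The plan is to prove Theorem~\ref{T:bilinear} by interpolating a trivial $L^2$ estimate (which holds with no $k_1$-dependent loss) with a bilinear estimate at an exponent $q_*$ slightly above $(d+3)/(d+1)$, where I reduce to Tao's bilinear restriction theorem for transverse elliptic surfaces via parabolic rescaling.

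For the $L^2$ endpoint, I would compute $\|\scriptE_1 f_1 \cdot \scriptE_2 f_2\|_{L^2}^2$ using Plancherel on the spacetime side. The spacetime Fourier transform of the product is supported on $\{(h_1(\xi_1) + h_2(\xi_2),\, \xi_1 + \xi_2)\}$, and the relevant change-of-variables Jacobian is controlled by $|\nabla h_1(\xi_1) - \nabla h_2(\xi_2)|$. The transversality hypothesis yields $|\nabla h_1| \lesssim 2^{(1-J)k_1} \ll 1 \sim |\nabla h_2|$, so this Jacobian is uniformly of size $\sim 1$, giving $\|\scriptE_1 f_1 \scriptE_2 f_2\|_{L^2} \lesssim \|f_1\|_2 \|f_2\|_2$ with constant independent of $k_1$, which is exactly the claim at $q = 2$ and $\delta = 0$.

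For the $L^{q_*}$ endpoint, I would first parabolically rescale $S_1$ to the unit scale via $\xi_1 = 2^{-k_1}\eta_1$ and $(t, x) = (2^{Jk_1}s,\, 2^{k_1}y)$, so that $\scriptE_1 f_1(2^{Jk_1} s, 2^{k_1} y) = 2^{-dk_1}\scriptE_{g_1} F_1(s, y)$ with $F_1(\eta_1) := f_1(2^{-k_1}\eta_1)$, a standard unit-scale elliptic extension. Under the same spacetime rescaling, $\scriptE_2 f_2(2^{Jk_1} s, 2^{k_1} y)$ equals $\scriptE_{\tilde h_2}\tilde F_2(s, y)$, the extension for the dilated surface $\tilde S_2 = \{(\tilde h_2(\eta), \eta) : \eta \in 2^{k_1}B(0, c_0)\}$ with $\tilde h_2(\eta) = 2^{Jk_1} g_2(2^{-k_1}\eta)$; this surface is uniformly elliptic, but has large aperture $\sim 2^{k_1}$ and large Hessian $\sim 2^{(J-2)k_1}$. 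I would decompose $\tilde S_2$ into caps of aperture $r = 2^{-(J-2)k_1/2}$, so that each cap, after its own parabolic rescaling, is a standard elliptic surface over $B(0,1)$ still transverse to $S_1$. Applying Tao's bilinear restriction theorem to each cap-pair and summing over caps via $L^2$-almost orthogonality combined with a bilinear-to-linear cap-summation in the spirit of \cite{TVV} then produces the $L^{q_*}$ bound; the Jacobians from the two rescalings and the cap count conspire to give the sharp prefactor $2^{k_1(J-2)(1/q_*-1/2)}$, and the $\delta$-loss arises from the standard induction-on-scales at the heart of Tao's theorem.

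Interpolating between the $L^2$ and $L^{q_*}$ estimates by bilinear complex interpolation yields the full claim for $q \in (q_*, 2]$; letting $q_* \downarrow (d+3)/(d+1)$ covers the range in the theorem. The main obstacle I expect is the cap-summation step: keeping the total loss equal to the sharp $2^{k_1(J-2)(1/q-1/2)}$ (up to the $\delta$), rather than letting it blow up as the number of caps grows, will require delicate use of orthogonality and of the transversality between $S_1$ and each cap of $\tilde S_2$.
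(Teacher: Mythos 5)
Your $L^2$ endpoint is fine and coincides with the paper's easy estimate \eqref{E:easy L2}. The genuine gap is at the heart of your $L^{q_*}$ step: ``applying Tao's bilinear restriction theorem to each cap-pair'' is not an available input. The obstruction is that the mismatch of curvature scales between the two pieces is affinely invariant: in any common coordinates the Hessian form of the (rescaled) $S_1$ piece is comparable to $2^{-(J-2)k_1}$ times that of the $S_2$ piece, so no common affine/parabolic renormalization makes both unit-scale elliptic at once. Concretely, if you normalize a cap of $\tilde S_2$ of aperture $2^{-(J-2)k_1/2}$ (translate, subtract its tangent plane, set $\eta=\eta_0+2^{-(J-2)k_1/2}\zeta$), then in the new variables the $g_1$-surface becomes a graph over a ball of radius $\sim 2^{(J-2)k_1/2}$ with Hessian $\sim 2^{-(J-2)k_1}$: you have reproduced the original two-scale problem with the roles of the surfaces exchanged, not reduced it to the elliptic bilinear theorem. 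Nor can the flat-looking piece be waved through as harmless: as remarked after Corollary~\ref{C:bilin Str gen}, the analogous estimate for flat but transverse hypersurfaces fails for $d\geq 3$, so the (tiny) curvature of $S_1$ must genuinely enter the argument; note also that $S_1$ is \emph{flatter} than a comparably sized cap of a unit elliptic surface (it deviates from its tangent plane by $2^{-Jk_1}\ll 2^{-2k_1}$), so the pair cannot be viewed as two transverse caps of one elliptic hypersurface. A secondary concern: even granting cap-pair estimates, your summation over the (large power of $2^{k_1}$ many) caps at an exponent below $2$ has no Whitney-type pairing to exploit, since every cap interacts with the single surface $S_1$, and it is far from clear it would yield the stated sharp prefactor.

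Because no renormalization aligns the two curvature scales, the paper does not invoke the elliptic bilinear theorem at all; it reproves a bilinear estimate for the mismatched pair from scratch. An epsilon-removal argument (Lemma~\ref{L:epsilon removal}) reduces \eqref{E:bilinear scale 1} to the local estimate of Proposition~\ref{P:local rest} on the elongated boxes $Q_R$ of dimensions $2^{k_1(J-2)}R\times R$, and that local estimate is proved by Wolff--Tao induction on scales with a wave packet decomposition adapted to the two scales: the packets for $\scriptE_2$ are built on time slabs of length $R$ and glued by the Cotlar--Stein/orthogonality Lemmas~\ref{L:f2j error} and~\ref{L:CotlarStein}, and the transversality/combinatorial Lemmas~\ref{L:geom combinat1} and~\ref{L:geom combinat2} are modified to the degenerate geometry. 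To salvage your outline you would need, as the key input for each cap-pair, essentially the full content of Proposition~\ref{P:local rest}; it does not follow from the unit-scale bilinear theorem by rescaling and cap decomposition.
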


\textit{Remarks:}  For $2 \leq q \leq \infty$, it is easy to prove this result without the exponential term; combining this with the theorem, we obtain the full range of estimates of the form $L^2 \times L^2 \to L^q$, excepting possibly the endpoint $q=\tfrac{d+3}{d+1}$.  We have not explored the optimal power of $2^{k_1}$ in \eqref{E:bilinear scale 1}.  In fact, the power given here is certainly not optimal since we do not use the small size of $S_1$.  On the other hand, our argument also works, with some modifications, when $h_1$ is replaced by $2^{-(J-2)k_1}g_1$.  

In 1+2 dimensions, bilinear adjoint restriction results have been proved in much greater generality by Buschenhenke--M\"uller--Vargas in \cite{BMV}.  It is also the author's understanding that they have independently obtained the above high-dimensional result using their methods (personal communication).  

The following scaling critical bilinear restriction result will be used in the proof of the linear restriction theorem.  

\begin{corollary}\label{C:bilin Str gen}
Assume that $\mathcal R^*(p_0 \to q_0)$ holds for some admissible pair $(p_0,q_0)$ with $p_0 > 2$, and assume that $N$ is large enough to satisfy both the hypotheses of the elliptic restriction theorem $\mathcal R^*(p_0 \to q_0)$ and of Theorem~\ref{T:bilinear}.  Then for all admissible pairs $(p,q)$ with $2 < p < p_0$, and any integers $k_1,k_2$, we have the bilinear extension estimate 
\begin{equation} \label{E:bilin Str gen}
\|(2^{-k_1 \frac{J-2}{q}} \mathcal E_1 f_1)( 2^{-k_2 \frac{J-2}{q}} \mathcal E_2 f_2)\|_{L^{\frac{q}2}_{t,x}} \lesssim  2^{-\delta_p |k_1-k_2|}\|f_1\|_{L^{p}_\xi}\|f_2\|_{L^{p}_\xi},
\end{equation}
for some $\delta_p>0$ depending only on $p,J$.  The implicit constant depends on $p,A,\eps_0,J$.  
\end{corollary}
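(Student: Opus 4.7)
The plan is to derive the corollary by bilinear Riesz--Thorin interpolation between two endpoint estimates: the unconditional bilinear bound of Theorem~\ref{T:bilinear} (which supplies the $k_1$-decay), and two copies of the linear hypothesis $\mathcal R^*(p_0 \to q_0)$ combined by H\"older (which supplies the $L^p$ scaling).

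First, by admissibility/scaling I reduce to the case $k_2=0$. Assuming $k_1 \geq k_2$, the change of variables $\xi \mapsto 2^{-k_2}\xi$, $t\mapsto 2^{Jk_2}t$, $x\mapsto 2^{k_2}x$ rewrites $\mathcal E_1, \mathcal E_2$ as the analogous operators at scales $K := k_1-k_2$ and $0$; a short check using the admissibility identity $q=(d+2)p'/d$ shows that the resulting factors of $2^{k_2}$ on the two sides of the asserted inequality cancel exactly. It thus suffices to prove, for $K\geq 0$,
$$\|\mathcal E_1 f_1\, \mathcal E_2 f_2\|_{L^{q/2}} \lesssim 2^{K((J-2)/q - \delta_p)}\|f_1\|_{L^p}\|f_2\|_{L^p}.$$

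Next, I record two endpoints on the admissibility line. At endpoint $A=(p_A, q_A)=(2,\,2(d+2)/d)$: since $q_A/2 = (d+2)/d > (d+3)/(d+1)$, Theorem~\ref{T:bilinear} applies and gives
$$\|\mathcal E_1 f_1\, \mathcal E_2 f_2\|_{L^{(d+2)/d}} \lesssim_\delta 2^{K(J-2)\left(\tfrac{d-2}{2(d+2)}+\delta\right)}\|f_1\|_{L^2}\|f_2\|_{L^2}.$$
At endpoint $B=(p_0,q_0)$: the substitution $\eta = 2^K\xi$ presents $\mathcal E_1$ as an affine pullback of the elliptic extension operator for the phase $g_1$ on $B(0,c_0)$, to which $\mathcal R^*(p_0 \to q_0)$ applies; unwinding the Jacobians and using the admissibility identity $d(1/q_0+1/p_0)=d-2/q_0$ collapses the exponent in $2^K$ to $(J-2)/q_0$, while the estimate for $\mathcal E_2$ is unrescaled. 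H\"older's inequality in $(t,x)$ then yields
$$\|\mathcal E_1 f_1\, \mathcal E_2 f_2\|_{L^{q_0/2}} \lesssim 2^{K(J-2)/q_0}\|f_1\|_{L^{p_0}}\|f_2\|_{L^{p_0}}.$$

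Bilinear complex interpolation at the parameter $\theta:=(\tfrac12-\tfrac1p)/(\tfrac12-\tfrac1{p_0}) \in (0,1)$ now produces a bilinear estimate at the intermediate admissible pair $(p,q)$ (with $1/p=(1-\theta)/2 + \theta/p_0$ and $1/q = (1-\theta)d/(2(d+2)) + \theta/q_0$) carrying $2^K$-exponent
$$\alpha = (1-\theta)(J-2)\bigl(\tfrac{d-2}{2(d+2)}+\delta\bigr) + \theta(J-2)/q_0.$$
Subtracting from the scaling exponent gives
$$\tfrac{J-2}{q} - \alpha = (J-2)(1-\theta)\bigl(\tfrac{1}{d+2} - \delta\bigr),$$
which is strictly positive once $\delta < 1/(d+2)$. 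Choosing e.g.\ $\delta = 1/(2(d+2))$ delivers the required decay with $\delta_p = (J-2)(1-\theta)/(2(d+2)) > 0$, strictly positive because $p<p_0$ forces $\theta<1$.

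The main obstacle is the bookkeeping: confirming that the scaling-critical exponent $q_A/2 = (d+2)/d$ lies in the range permitted by Theorem~\ref{T:bilinear}, and that the gain $1/(d+2)$ at this endpoint strictly beats the admissible loss $\delta$ after interpolation. Once those two checks are in place, the interpolation itself is a standard bilinear Riesz--Thorin argument.
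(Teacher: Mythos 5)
Your proposal is correct and follows essentially the same route as the paper: specialize Theorem~\ref{T:bilinear} to the Stein--Tomas exponent $q=\tfrac{d+2}{d}$ (giving the $L^2\times L^2$ endpoint with the $2^{-c|k_1-k_2|}$ gain after comparing with the scaling exponent), pair it with the H\"older product of two rescaled linear estimates from $\mathcal R^*(p_0\to q_0)$ at $(p_0,q_0)$, and interpolate bilinearly along the admissibility line. Your explicit bookkeeping (reduction to $k_2=0$, the cancellation of the $2^{k_2}$ factors via admissibility, and the computed gain $(J-2)(1-\theta)(\tfrac1{d+2}-\delta)$) matches the paper's argument, which merely states these steps more tersely.
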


\begin{proof}[Proof of Corollary~\ref{C:bilin Str gen}]
By considering the special case $q=\frac{d+2}{d}$ of the bilinear theorem and rescaling, we obtain the bilinear Stein--Tomas inequality
\begin{equation} \label{E:bilin Str}
2^{-\frac{(J-2)d}{2(d+2)}(k_1+k_2)}\|\mathcal E_1 f_1 \mathcal E_2 f_2\|_{L^{\frac{d+2}d}_{t,x}} \lesssim  2^{-c_d |k_1-k_2|} \|f_1\|_{L^2_\xi}\|f_2\|_{L^2_\xi},
\end{equation}
for some $c_d > 0$.  

Supposing that $\mathcal R^*(p_0 \to q_0)$ holds for some admissible pair, by rescaling we see that 
$$
\|2^{-k_j \frac{J-2}{q_0}} \mathcal E_j f\|_{L^{q_0}_{t,x}} \lesssim \|f\|_{L^{p_0}_\xi}, \qquad j=1,2.
$$
Thus by Cauchy--Schwarz,
$$
\|2^{-k_1 \frac{J-2}{q_0}} \mathcal E_1 f_1 2^{-k_2 \frac{J-2}{q_0}} \mathcal E_2 f_2\|_{L^{\frac{q_0}2}_{t,x}} \lesssim \|f_1\|_{L^{p_0}_\xi}\|f_2\|_{L^{p_0}_\xi},
$$
for any pair $k_1,k_2$.  By interpolation with \eqref{E:bilin Str}, we obtain the corollary.
\end{proof}

\textit{Remark:}  In any dimension, an $L^2_\xi \times L^2_\xi \to L^2_{t,x}$ estimate is easily proved by a well known argument using Plancherel, a change of variables, transversality (not curvature) of the hypersurfaces, and the support sizes.  In dimension 1+2, this yields  the improved bilinear Stein--Tomas estimate \eqref{E:bilin Str} directly, giving the corollary without the need for the bilinear machinery.  In higher dimensions, this does not quite work; we would want
$$
\begin{aligned}
&|\iint_{\{|\zeta_j| < c_0 2^{-k_j}\}} f_1(\zeta_1) f_2(\zeta_2) f_3(h_1(\zeta_1)+h_2(\zeta_2),\zeta_1+\zeta_2)\, d\zeta_1\, d\zeta_2| \\
&\qquad \qquad\qquad\qquad \qquad \lesssim 2^{(k_1+k_2)c_d(J-2)}\|f_1\|_{L^\frac{4d}{3d-2}_\xi}\|f_2\|_{L^\frac{4d}{3d-2}_\xi} \|f_3\|_{L^2_\xi},
\end{aligned}
$$
for $c_d > 0$ sufficiently small.  Unlike the $d=2$ case, however, the corresponding estimate for flat but transverse hypersurfaces is false, so curvature must play some role.  (The full range of exponents in the flat case is given in \cite{BCCTlong, BCCTshort}.)

\subsection*{Notation}  We use $\mathcal R^*(p \times p \to q)$ as shorthand for the statement that inequality \eqref{E:bilin Str gen} holds for extension operators $\mathcal E_1,\mathcal E_2$ as described in this section.

%%%%%%%%%%%%%%%%%%%%%%%%%%%%%%%%%%%%%%%%%%%%%%%
%%%%%%%%%%%%%%%%%%%%%%%%%%%%%%%%%%%%%%%%%%%%%%%
%%%%%%%%%%%%%%%%%%%%%%%%%%%%%%%%%%%%%%%%%%%%%%%
\section{Proof of the linear result} \label{S:linear}
%%%%%%%%%%%%%%%%%%%%%%%%%%%%%%%%%%%%%%%%%%%%%%%
%%%%%%%%%%%%%%%%%%%%%%%%%%%%%%%%%%%%%%%%%%%%%%%
%%%%%%%%%%%%%%%%%%%%%%%%%%%%%%%%%%%%%%%%%%%%%%%

This section will be devoted to a proof of Theorem~\ref{T:main}, using Corollary~\ref{C:bilin Str gen} from the previous section.  

For the remainder of the section, we assume that the adjoint restriction conjecture $\mathcal R^*(p_0 \to q_0)$ holds for some (extension) admissible pair $(p_0,q_0)$.  We may assume that $p_0 > 2$.  

Fix an admissible pair $(p,q)$ with $p < p_0$.  By interpolation with the trivial $L^1 \to L^\infty$ bound, $\mathcal R^*(p_0 \to q_0)$ implies that $\scriptR^*(p \to q)$ holds for all admissible pairs $(p,q)$ with $p \leq p_0$.  

Write $P(t) = a_0 + a_1 t^2 + \cdots + a_N t^{2N}$, with the $a_i$ nonnegative.  We may assume that $a_0=0$.  By duality, it suffices to prove that 
\begin{equation} \label{E:extension}
\|\Lambda_P(\nabla)^{\frac1{p'}} \mathcal E_P f\|_{L^q_{t,x}} \lesssim \|f\|_{L^p_\xi}
\end{equation}
for all $f \in L^p_\xi$ and admissible $(p,q)$ with $p < p_0$, where the implicit constant depends on $p,N$.  Here $\Lambda_P(\nabla)$ denotes the Fourier multiplication operator with symbol $\Lambda_P(\xi)$.  

\subsection{Initial decomposition}

We begin by decomposing $(0,\infty)$ as a union of intervals on which $P$ is essentially monomial-like.  

Define 
$$
J_j := \{t \in (0,\infty) : a_j t^{2j} = \max_{1 \leq i \leq N} a_i t^{2i}\}.
$$
Then the $J_j$ are consecutive intervals, intersecting only at their boundaries, and $(0,\infty) = \bigcup_{j=1}^N J_j$.  By the triangle inequality, it suffices to prove \eqref{E:extension} for $f$ supported on a single annulus $\{|\xi| \in J_j\}$.  The low frequency case is easy.

\begin{lemma} \label{L:low freq}
Let $B_1 = \{0\} \cup \{|\xi| \in J_1\}$.  We have the estimate
\begin{equation} \label{E:low freq}
\|\Lambda_P(\nabla)^{\frac1{p'}} \mathcal E_P \chi_{B_1} f\|_{L^q_{t,x}} \lesssim \|f\|_{L^p_\xi},
\end{equation}
with uniform implicit constants.  
\end{lemma}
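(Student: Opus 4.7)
The plan is to exploit three features of the low-frequency region: (i) on $B_1$ the phase $P(|\xi|)$ is essentially a rescaling of the pure paraboloid $|\eta|^2$, (ii) the weight $\Lambda_P$ is essentially constant on $B_1$, and (iii) the restriction estimate with affine surface measure is invariant under parabolic rescaling on the scaling line $q = (d+2)p'/d$. Together these will reduce the lemma to the elliptic adjoint restriction hypothesis $\scriptR^*(p \to q)$, which is available since $p \leq p_0$.

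First I would write $J_1 = (0,r]$ with $r := \sup J_1$ (if $a_1 = 0$ there is nothing to prove). The defining property of $J_1$ gives $a_j r^{2j-2} \leq a_1$ for $2 \leq j \leq N$, which forces $P(t) \sim a_1 t^2$, $P'(t) \sim a_1 t$, and $P''(t) \sim a_1$ uniformly on $[0,r]$, with implicit constants depending only on $N$. Since the Hessian of $\xi \mapsto P(|\xi|)$ has eigenvalues $P''(|\xi|)$ (once) and $P'(|\xi|)/|\xi|$ (with multiplicity $d-1$), this gives $\Lambda_P(\xi) \sim a_1^{d/(d+2)}$ uniformly on $B_1$. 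A direct Fourier-side computation also gives the pointwise identity
\[
\Lambda_P(\nabla)^{1/p'} \scriptE_P h = \scriptE_P[\Lambda_P^{1/p'} h], \qquad h \in \scriptS(\R^d),
\]
so absorbing the weight into the density and using the near-constancy of $\Lambda_P$ on $B_1$ reduces \eqref{E:low freq} to proving
\[
\|\scriptE_P g\|_{L^q_{t,x}} \lesssim a_1^{-d/((d+2)p')} \|g\|_{L^p_\xi}, \qquad \supp g \subseteq B_1.
\]

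Next I would rescale to unit scale by setting $\xi = r\eta$, $s = a_1 r^2 t$, $y = rx$, and introducing the polynomial
\[
\tilde P(u) := a_1^{-1} r^{-2} P(r u) = u^2 + \sum_{j=2}^N \bigl(a_j a_1^{-1} r^{2j-2}\bigr) u^{2j},
\]
whose coefficients all lie in $[0,1]$, with the leading one equal to $1$; consequently $\tilde P(|\eta|)$ is an elliptic phase on $B(0,1)$ whose ellipticity parameters and $C^N$-norm depend only on the degree $N$. A change of variables gives $\scriptE_P g(t,x) = r^d \scriptE_{\tilde P}[g(r\,\cdot)]\bigl(a_1 r^2 t,\, r x\bigr)$; tracking the Jacobians together with the extension admissibility $q = (d+2)p'/d$ makes all powers of $r$ and $a_1$ cancel, reducing the displayed inequality to
\[
\|\scriptE_{\tilde P} \tilde g\|_{L^q} \lesssim \|\tilde g\|_{L^p}, \qquad \supp \tilde g \subseteq B(0,1).
\]
This last inequality is exactly $\scriptR^*(p \to q)$ applied to the elliptic phase $\tilde P(|\eta|)$, with an implicit constant depending only on $p$, $p_0$, and $N$.

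The only delicate point is the bookkeeping in the second step: one must check that the three rescalings---absorbing the weight, rescaling $\xi$, and rescaling $(t,x)$---combine so that the powers of $a_1$ and $r$ cancel exactly. This cancellation is no accident: it is precisely the feature that makes affine surface measure the right weight for uniform estimates, and once it is observed the lemma is essentially immediate from the hypothesis $\scriptR^*(p_0 \to q_0)$.
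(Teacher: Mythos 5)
Your proof is correct and follows essentially the same route as the paper: normalize by an affine rescaling (the paper phrases this as ``assume $a_1=1$ and $B_1=B$'', you track the powers of $a_1$ and $r=\sup J_1$ explicitly and verify they cancel on the scaling line), observe via $a_jr^{2j-2}\le a_1$ that the rescaled phase is elliptic with parameters depending only on $N$ and that $\Lambda_P$ is essentially constant, and invoke $\scriptR^*(p\to q)$. Your write-up is simply a more detailed version of the paper's two-line argument, so no further comment is needed.
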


\begin{proof} By rescaling, we may assume that $B_1$ equals $B$, the unit ball.  By applying an affine transformation, we may assume that $a_1 = 1$.  Then by the definition of $J_1$, $a_j \leq 1$, $2 \leq j \leq N$, so $\Lambda_P \sim 1$ on $B$ and \eqref{E:low freq} just follows from our assumption that $\mathcal R^*(p_0 \to q_0)$ (and hence $\mathcal R^*(p \to q)$) holds.  
\end{proof}

\subsection{Dyadic decomposition}
Fix an integer $j \geq 2$.  By applying an affine transformation, we may assume that $a_j = 1$.  

Let $I_k := J_j \cap [2^{-k-1},2^{-k}]$.  Assume that $I_k \neq \emptyset$.  We will assume that $I_k = [2^{-k-1},2^{-k}]$.  (For simplicity we ignore intervals containing the endpoints of the $J_j$; they may be treated similarly, and there are only a bounded number of them anyway.)  Let $A_k:= \{\xi:|\xi| \in I_k\}$.  Consider the phase 
$$
g_k(\xi) := 2^{2jk}g(2^{-k}|\xi|) = |\xi|^{2j} + \sum_{i \neq j} a_i 2^{2(j-i)k}|\xi|^i, \qquad \xi \in A_0.
$$
Since $2^{-k}\in J_j$, we have $2^{2(j-i)k}a_i \leq 1$, $i \neq j$, so $g_k$ is elliptic (with the parameters $A,\eps$ depending only on the degree of $P$).  Thus by our hypothesis that $\mathcal R^*(p \to q)$ holds for admissible $(p,q)$ with $p \leq p_0$, and rescaling, we have the following.

\begin{lemma}\label{L:dyadic freq}
For any $k \in \Z$,
$$
\|\Lambda_P(\nabla)^{\frac1{p'}}\mathcal E_P \chi_{A_k} f\|_{L^q_{t,x}} \lesssim \|f\|_{L^p_\xi}.
$$
\end{lemma}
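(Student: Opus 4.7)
The plan is to rescale $A_k$ to the unit annulus and invoke the elliptic restriction hypothesis on the phase $g_k$. Two preliminary observations organize the reduction. First, since $\Lambda_P(\nabla)$ is a Fourier multiplier in the spatial variable,
$$\Lambda_P(\nabla)^{1/p'}\mathcal{E}_P\chi_{A_k}f = \mathcal{E}_P\bigl(\Lambda_P^{1/p'}\chi_{A_k}f\bigr),$$
so the affine weight can be absorbed into the input. Second, on $A_k$ we have $|\xi|\sim 2^{-k}$, and because $2^{-k}\in J_j$ with $a_j = 1$, the monomial $t^{2j}$ dominates $P(t)$ on $I_k$; the radial formula
$$\Lambda_P(\xi) = |P''(|\xi|)|^{1/(d+2)}\bigl|P'(|\xi|)/|\xi|\bigr|^{(d-1)/(d+2)}$$
then gives $\Lambda_P(\xi)\sim 2^{-(2j-2)dk/(d+2)}$ uniformly on $A_k$. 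So it suffices to prove the unweighted bound
$$\|\mathcal{E}_P\chi_{A_k}f\|_{L^q_{t,x}} \lesssim 2^{(2j-2)dk/((d+2)p')}\|f\|_{L^p_\xi}.$$

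Next I would carry out the parabolic change of variables $\xi = 2^{-k}\eta$, $s = 2^{-2jk}t$, $y = 2^{-k}x$. Setting $\tilde f(\eta) := f(2^{-k}\eta)\chi_{A_0}(\eta)$, a direct substitution gives
$$\mathcal{E}_P\chi_{A_k}f(t,x) = 2^{-kd}\,\mathcal{E}_{g_k}\tilde f(s,y),$$
where $g_k$ is the rescaled phase defined just before the lemma. The key geometric input, stated there, is that $g_k$ is elliptic on $A_0$ with parameters depending only on $d$ and the degree of $P$: the leading $|\eta|^{2j}$ term has uniformly positive Hessian on $|\eta|\sim 1$, while the other terms $a_i 2^{2(j-i)k}|\eta|^{2i}$ are controlled because $a_i 2^{2(j-i)k}\leq 1$ whenever $2^{-k}\in J_j$. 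The hypothesis $\mathcal{R}^*(p\to q)$ — holding for our $(p,q)$ with $p<p_0$ by interpolation of $\mathcal{R}^*(p_0\to q_0)$ with the trivial $L^1\to L^\infty$ bound — therefore applies to $g_k$ with constants uniform in $k$, yielding
$$\|\mathcal{E}_{g_k}\tilde f\|_{L^q_{s,y}} \lesssim \|\tilde f\|_{L^p_\eta} = 2^{kd/p}\|\chi_{A_k}f\|_{L^p_\xi}.$$

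The final step is to collect the powers of $2^k$. Reversing the $(s,y)\leftrightarrow(t,x)$ rescaling introduces $2^{(2j+d)k/q}$, the Jacobian from $\xi = 2^{-k}\eta$ gives $2^{-kd}$, the rescaling of $f$ contributes $2^{kd/p}$, and the affine weight contributes $2^{-(2j-2)dk/((d+2)p')}$. The total exponent
$$-d + \tfrac{2j+d}{q} + \tfrac{d}{p} - \tfrac{(2j-2)d}{(d+2)p'}$$
collapses to $0$ once one substitutes the admissibility relation $1/q = d/((d+2)p')$ and uses $1/p + 1/p' = 1$. The uniform bound follows. The only step of substance is the uniform ellipticity of $g_k$, already granted above; the rest is parabolic-scaling bookkeeping arranged precisely so that the admissibility exponent cancels.
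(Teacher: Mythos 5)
Your argument is correct and is essentially the paper's own proof, which is compressed into the remark preceding the lemma: the rescaled phase $g_k$ is uniformly elliptic because $a_i2^{2(j-i)k}\le 1$ for $2^{-k}\in J_j$, one applies $\mathcal R^*(p\to q)$ (obtained from $\mathcal R^*(p_0\to q_0)$ by interpolation), and parabolic rescaling plus the admissibility relation $q=\tfrac{(d+2)p'}{d}$ makes all powers of $2^k$ cancel. You have simply written out the weight computation $\Lambda_P\sim 2^{-(2j-2)dk/(d+2)}$ on $A_k$ and the scaling bookkeeping explicitly, which checks out.
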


\subsection{Almost orthogonality.}  The next lemma establishes a decay estimate for the interaction between annular pieces at different scales.  

\begin{lemma}  \label{L:dyadic bilin}
For any integers $k_1,k_2$ such that $I_{k_i} \cap J_j \neq \emptyset$ for $i=1,2$ and some $j \geq 2$,
\begin{equation} \label{E:dyadic bilin}
\bigl\|\bigl(\Lambda_P(\nabla)^{\frac1{p'}} \mathcal E_P \chi_{A_{k_1}} f_1\bigr)\bigl( \Lambda_P(\nabla)^{\frac1{p'}} \mathcal E_P \chi_{A_{k_2}} f_2\bigr)\bigr\|_{L^{\frac{q}2}_{t,x}} \lesssim 2^{-\delta|k_1-k_2|}\|f_1\|_{L^p_\xi}\|f_2\|_{L^p_\xi}.
\end{equation}
\end{lemma}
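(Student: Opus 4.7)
My plan is to reduce \eqref{E:dyadic bilin} to an application of Corollary~\ref{C:bilin Str gen}. Since that corollary already yields a bilinear extension bound with $2^{-\delta|k_1-k_2|}$ decay for rescaled elliptic phases, the task is to verify that two dyadic annular pieces of $\mathcal E_P$ fit that template after a suitable cap decomposition. I would begin by observing that on $A_k\subseteq J_j$ (with the normalization $a_j=1$), the weight $\Lambda_P^{1/p'}$ is essentially constant: $D^2(P(|\cdot|))(\xi)$ has one radial eigenvalue $P''(|\xi|)$ and $d-1$ tangential eigenvalues $P'(|\xi|)/|\xi|$, each of size $\sim 2^{-(2j-2)k}$ because $|\xi|^{2j}$ dominates on $I_k$; hence $\Lambda_P(\xi)\sim 2^{-d(2j-2)k/(d+2)}$, and by the admissibility relation $q=(d+2)p'/d$, $\Lambda_P(\xi)^{1/p'}\sim 2^{-(2j-2)k/q}$ throughout $A_k$. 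Up to an $O(1)$ multiplicative factor one may thus treat $\Lambda_P(\nabla)^{1/p'}\mathcal E_P\chi_{A_k}$ as $2^{-(2j-2)k/q}\mathcal E_P\chi_{A_k}$.

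Next I would decompose each annulus $A_{k_i}$ into a bounded collection $\{C_{\alpha,i}\}$ of at most $O_{d,c_0}(1)$ caps, each contained in a ball $B(\xi_{\alpha,i},c_0 2^{-k_i})$ with $|\xi_{\alpha,i}|\in I_{k_i}$, where $c_0$ is the constant of Theorem~\ref{T:bilinear}. The triangle inequality in $L^{q/2}$ reduces the claim to a single ordered pair of caps $(C_{\alpha,1},C_{\beta,2})$, which I may assume are centered at $\xi_i:=\xi_{\alpha_i,i}$ with $k_1>k_2$. Translating the $\xi$-integral over the $i$-th cap by $\xi_i$ (and absorbing the constant $P(|\xi_i|)$) multiplies $\mathcal E_P\chi_{C_{\alpha_i,i}}f_i$ by a unimodular spacetime factor and leaves all relevant norms unchanged, while replacing the phase by $\tilde h_i(\eta):=P(|\xi_i+\eta|)-P(|\xi_i|)$ on $|\eta|<c_0 2^{-k_i}$.

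With $J:=2j$ I would then set $g_i(\zeta):=2^{Jk_i}\tilde h_i(2^{-k_i}\zeta)$ on $B(0,c_0)$ and check that $g_1,g_2$ are elliptic phases in the sense required by Corollary~\ref{C:bilin Str gen}. Using the expansion $P(t)=t^{2j}+\sum_{\ell\ne j}a_\ell t^{2\ell}$ together with the dyadic domination $a_\ell 2^{-2\ell k_i}\le 2^{-2jk_i}$ valid on $I_{k_i}$, the $n$-th derivative of $P(|\cdot|)$ at $\xi_i$ has size $\lesssim 2^{-(2j-n)k_i}$, so $|D^n g_i(0)|\lesssim 2^{(J-n)k_i}\cdot 2^{-(2j-n)k_i}=1$; in particular $|\nabla g_i(0)|\sim 1$ and the eigenvalues of $D^2 g_i(0)$ are $\sim 1$. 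Thus $g_1,g_2$ are elliptic on $B(0,c_0)$ with parameters depending only on $d$ and $j$, and the hypothesis $|\nabla g_1(0)|\lesssim|\nabla g_2(0)|\sim 1$ of Theorem~\ref{T:bilinear} holds trivially because both gradients have unit size. The translated extension operators are then precisely the operators $\mathcal E_1,\mathcal E_2$ of Corollary~\ref{C:bilin Str gen} with this $J=2j$.

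Finally, the factor $2^{-k_i(J-2)/q}$ produced by the corollary matches exactly the weight $\Lambda_P^{1/p'}\sim 2^{-(2j-2)k_i/q}$ isolated in the first paragraph, so the corollary yields
\[
\bigl\|\bigl(\Lambda_P(\nabla)^{1/p'}\mathcal E_P\chi_{C_{\alpha,1}}f_1\bigr)\bigl(\Lambda_P(\nabla)^{1/p'}\mathcal E_P\chi_{C_{\beta,2}}f_2\bigr)\bigr\|_{L^{q/2}_{t,x}}\lesssim 2^{-\delta_p|k_1-k_2|}\|f_1\|_{L^p_\xi}\|f_2\|_{L^p_\xi},
\]
and summing over the $O(1)$ pairs of caps gives \eqref{E:dyadic bilin}. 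The main obstacle is the verification carried out in the third paragraph: one must show that the rescaled phases $g_i$ remain uniformly elliptic with unit gradient at the origin, and this is precisely where the structural hypotheses on $P$ (nonnegative coefficients and the domination by $a_j t^{2j}$ on $I_{k_i}$) are essential. Everything else is bookkeeping: rotational symmetry of $S_P$ is not actually used by Corollary~\ref{C:bilin Str gen}, and summing over caps loses only a dimensional constant.
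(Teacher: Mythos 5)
Your proposal is correct and follows essentially the same route as the paper: the paper likewise notes that the rescaled annular phases $g_k$ are uniformly elliptic with $|\nabla g_k|\sim 1$ on $A_0$, decomposes the unit annulus into $O(1)$ balls of radius $c_0$ (so each annulus $A_{k_i}$ into caps of radius $c_02^{-k_i}$), and concludes from Corollary~\ref{C:bilin Str gen} and the triangle inequality. Your explicit identification of $\Lambda_P^{1/p'}\sim 2^{-k(J-2)/q}$ on $A_k$ (which may be absorbed into the density since the weight commutes with $\mathcal E_P$ as multiplication of $f$ by the symbol) with the corollary's normalizing factors, together with the translation/modulation of each cap to the frequency origin and the ellipticity check for the translated rescaled phases with $J=2j$, simply makes explicit the bookkeeping the paper leaves implicit.
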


\begin{proof}
We know that the $g_k$ are elliptic with uniform parameters; let these be denoted by $A,\eps$.  Since $|\nabla g_k| \sim 1$ on $A_0$, we may decompose $A_0$ as a finite union of balls of radius $c_0$, with $c_0$ sufficiently small that $A c_0 \ll \eps$ (as was required for Theorem~\ref{T:bilinear}).  Then \eqref{E:dyadic bilin} follows from Corollary~\ref{C:bilin Str gen} and the triangle inequality.
\end{proof}

\subsection*{Summation} Now we put the pieces together.  Using boundedness of the Littlewood--Paley square function, Minkowski's inequality and the fact that $q \leq 4$, Lemma~\ref{L:dyadic bilin}, the fact that $\delta > 0$, and finally the fact that $q > p$, we have for any $j \geq 2$ that
\begin{align*}
&\|\Lambda_P(\nabla)^{\frac1{p'}} \mathcal E_P \chi_{\{|\xi| \in J_j\}} f\|_{L^q_{t,x}}^q
\lesssim \int \left(\sum_k |\Lambda_P(\nabla)^{\frac1{p'}}\mathcal E_P \chi_{A_k} f|^2\right)^{\frac q2}\, dx\, dt\\
&\qquad \lesssim \sum_{k_1 \leq k_2} \int |(\Lambda_P(\nabla)^{\frac1{p'}} \mathcal E_P \chi_{A_{k_1}} f)(\Lambda_P(\nabla)^{\frac1{p'}} \mathcal E_P \chi_{A_{k_2}} f)|^{\frac q2}\, dx\, dt\\
&\qquad \lesssim  \sum_{k_1 \leq k_2} 2^{-\frac q2 \delta|k_1-k_2|} \|\chi_{A_{k_1}} f\|_{L^p_\xi}^{\frac q2}\|\chi_{A_{k_2}}f\|_{L^p_\xi}^{\frac q2} \lesssim \sum_k \|\chi_{A_k} f\|_{L^p_\xi}^q \leq \|f\|_{L^p_\xi}^q.  
\end{align*}
This completes the proof of Theorem~\ref{T:main}, modulo the proof of Theorem~\ref{T:bilinear}.  \qed  

We note that related applications of square functions (albeit more complex ones) have also appeared in the work \cite{CKZ1, CKZ2} of Carbery--Kenig--Ziesler.  

We will give the proof of Theorem~\ref{T:bilinear} over the next 6 sections.  The argument is essentially that of Tao in \cite{TaoParab}, but modifications are needed throughout to deal with the degenerate curvature.  

%%%%%%%%%%%%%%%%%%%%%%%%%%%%%%%%%%%%%%%%%%%%%%%
%%%%%%%%%%%%%%%%%%%%%%%%%%%%%%%%%%%%%%%%%%%%%%%
%%%%%%%%%%%%%%%%%%%%%%%%%%%%%%%%%%%%%%%%%%%%%%%
\section{Preliminary reductions}
%%%%%%%%%%%%%%%%%%%%%%%%%%%%%%%%%%%%%%%%%%%%%%%
%%%%%%%%%%%%%%%%%%%%%%%%%%%%%%%%%%%%%%%%%%%%%%%
%%%%%%%%%%%%%%%%%%%%%%%%%%%%%%%%%%%%%%%%%%%%%%%

After making an invertible affine transformation of the frequency space $\R^{1+d}$, we may assume that $\nabla g_1(0) = 0$, that $D^2 g_1(0)=I_d$ (the identity), that $\nabla g_2(0) = e_1$ (the first coordinate vector), and that $D^2 g_2(0)$ is positive definite with eigenvalues comparable to 1.  We recall that the hypersurface $S_1$ is at scale $2^{-k_1}$, with $k_1 \geq C_{d,J,\eps} \gg 1$, while $S_2$ is at scale 1.  

For $R \geq 1$, let $Q_R$ denote the set
$$
Q_R = \{(t,x) \in \R^{1+d} : \tfrac12 2^{k_1(J-2)}R \leq t \leq 2^{k_1(J-2)}R, \quad |x| \leq R\}.
$$
The main step in the proof of our bilinear restriction theorem is the following local estimate.  

\begin{proposition} \label{P:local rest}
For every $\delta > 0$ and $R \geq 1$, 
\begin{equation} \label{E:local rest}
\|\mathcal E_1 f_1 \mathcal E_2 f_2\|_{L^{\frac{d+3}{d+1}}_{t,x}(Q_R)} \lesssim_\delta 2^{k_1\delta} R^\delta 2^{\frac{k_1(J-2)(d-1)}{2(d+3)}}\|f_1\|_{L^2_\xi}\|f_2\|_{L^2_\xi}, \qquad f_2,f_2 \in L^2.
\end{equation}
The implicit constant is allowed to depend on $d, J, \delta$, but not on $R$ or $k_1$.  
\end{proposition}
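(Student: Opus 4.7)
The natural approach is to adapt Tao's bilinear restriction argument \cite{TaoParab} to the present two-scale setting. Let $A(R)$ denote the best constant in \eqref{E:local rest}; the goal is
\begin{equation*}
A(R) \leq C_\delta\, 2^{k_1\delta}R^\delta \cdot 2^{k_1(J-2)(d-1)/(2(d+3))}.
\end{equation*}
I would establish this by an induction on scales in $R$: prove a recursive estimate of the form $A(R) \leq C_\delta \cdot (\mathrm{Main}) + \tfrac12 A(R/2)$ (or some variant with a fractional drop in $R$), so that iterating from $R$ down to $O(1)$---where the bound holds trivially by Cauchy--Schwarz and the $L^\infty$-bound $\|\mathcal E_j f_j\|_\infty \lesssim 2^{-k_j d/2}\|f_j\|_{L^2}$---yields the claimed majorant with the $R^\delta$-loss absorbing the recursion.

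\emph{Wave packet analysis.} For the inductive step I would first rescale $\mathcal E_1$ via $\xi = 2^{-k_1}\eta$, turning it into a standard elliptic extension on the unit ball; this transforms $Q_R$ into a parabolic box of spatial side $2^{-k_1}R$ and time-length $2^{-2k_1}R$. On this rescaled box both $\mathcal E_j f_j$ admit wave packet decompositions at the common parabolic cap scale $R^{-1/2}$. After pigeonholing to uniform tube amplitudes and to caps carrying comparable $L^2$-mass, the bilinear product $\mathcal E_1 f_1\, \mathcal E_2 f_2$ is controlled by counting incidences between a tube $T_1$ from the $\mathcal E_1$-family and tubes $T_2$ from the $\mathcal E_2$-family inside $Q_R$. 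The transversality $|\nabla g_2(0) - \nabla g_1(0)|\sim 1$ (which survives rescaling because $|\nabla g_1(0)|$ is already small) guarantees that in each unit sub-box the two tube directions differ by $\sim 1$, so a given $T_1$ meets only $O(1)$ tubes $T_2$. Combining this incidence bound with $L^2$-orthogonality across caps and the induction hypothesis applied on sub-boxes of $Q_R$ recovers the endpoint estimate: the exponent $(d-1)/(2(d+3))$ arises, as in Tao, from interpolating between the trivial $L^\infty\times L^\infty$ count and the $L^2\times L^2$ orthogonality gain.

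\emph{Main obstacle.} The chief difficulty is the extreme flatness of $S_1$ (principal curvatures of order $2^{-(J-2)k_1}$) and the correspondingly elongated shape of $Q_R$: in the original variables, $\mathcal E_1$-wave packets are tubes of length $\sim 2^{k_1(J-2)}R$, so a naive Kakeya-type count would lose a full factor of $2^{k_1(J-2)}$ per tube. The rescaling above trades flat curvature for a small cap and puts the problem on uniformly elliptic footing, but one must then verify that each ingredient of the induction---particularly the pigeonholing and the $R^\delta$-loss bootstrap---behaves \emph{uniformly in $k_1$}, so that the harmless $2^{k_1\delta}$ on the right-hand side of \eqref{E:local rest} can be obtained without accumulating additional powers of $2^{k_1}$ that would destroy the sharp exponent $(d-1)/(2(d+3))$.
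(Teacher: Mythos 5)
Your central simplification does not work: the change of variables that normalizes $S_1$ necessarily denormalizes $S_2$, so there is no rescaling after which "both $\mathcal E_j f_j$ admit wave packet decompositions at the common parabolic cap scale $R^{-1/2}$" on uniformly elliptic footing. In the rescaled spacetime variables $(t',x')=(2^{-Jk_1}t,\,2^{-k_1}x)$ adapted to $S_1$, the second surface becomes (in the natural frequency variable) a graph with Hessian of size $2^{(J-2)k_1}$ and tube velocities of size $2^{(J-1)k_1}$: its $R^{-1/2}$-cap wave packets are not coherent tubes across the rescaled box, and the claim that a fixed $T_1$ meets $O(1)$ tubes $T_2$ per unit sub-box has no basis in this anisotropic geometry. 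Indeed, if a single affine change of variables could reduce the problem to the standard elliptic bilinear setting, Proposition~\ref{P:local rest} would follow from Tao's theorem with \emph{no} loss in $k_1$; the factor $2^{k_1(J-2)(d-1)/(2(d+3))}$ is precisely the price of the curvature mismatch, and your sketch never actually produces this exponent (in Tao's argument there is no analogous factor to be extracted by interpolating $L^\infty\times L^\infty$ against $L^2\times L^2$).

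The concrete missing ingredient is the treatment of dispersion of $\mathcal E_2 f_2$ over the long time length $2^{k_1(J-2)}R$ of $Q_R$: an $S_2$ wave packet at cap scale $R^{-1/2}$ is a coherent tube only over time intervals of length $\sim R$, so one cannot decompose data at a single time and propagate it across $Q_R$. The paper handles this by cutting $Q_R$ into $\sim 2^{k_1(J-2)}$ cubes $Q_j'$ of side $R$, approximating $\mathcal E_2 f_2$ on $Q_j'$ by $\mathcal E_2 f_2^{(j)}$ with spatially localized data (Lemma~\ref{L:f2j error}), proving the almost-orthogonality $\sum_j\|f_2^{(j)}\|_{L^2}^2\lesssim\|f_2\|_{L^2}^2$ by Cotlar--Stein (Lemma~\ref{L:CotlarStein}) --- this is exactly where the restriction $R\gtrsim 2^{\delta k_1(J-2)}$, and hence the harmless $2^{k_1\delta}$, enters --- and then running Tao's wave packet decomposition slab by slab (Lemma~\ref{L:time loc packet 2}) while retaining $\ell^2$ control of the coefficients. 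Moreover, the exponent $\tfrac{(J-2)(d-1)}{2(d+3)}k_1$ is produced already in the base case (Lemma~\ref{L:base case}) by H\"older on the elongated box together with the $L^2(Q_R)$ bound \eqref{E:E2f2L2}, and the induction on scales, in the form \eqref{E:induction step} with the $\sim$ relation between tubes and the balls $B$ and the two-scale combinatorial Lemmas~\ref{L:geom combinat1}--\ref{L:geom combinat2}, only improves the power of $R$, not of $2^{k_1}$. Your outer induction-on-$R$ shell is in the right spirit, but without the time-localized packet decomposition, the almost-orthogonality input, and the modified combinatorics for tubes of the two different types, both the inductive step and the stated power of $2^{k_1}$ remain unproved.
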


The remainder of this section will be devoted to a proof of the sufficiency of Proposition~\ref{P:local rest}.  By interpolation with the easy estimate
\begin{equation} \label{E:easy L2}
\|\mathcal E_1 f_1 \mathcal E_2 f_2\|_{L^2_{t,x}} \lesssim \|f_1\|_{L^2_\xi}\|f_2\|_{L^2_\xi},
\end{equation}
it suffices to prove the following ``epsilon removal'' lemma.

\begin{lemma} \label{L:epsilon removal}
Assuming Proposition~\ref{P:local rest}, for any $\delta > 0$ and $\tfrac{d+2}d > q > \tfrac{d+3}{d+1}$, 
\begin{equation} \label{E:epsilon removal}
\|\mathcal E_1 f_1 \mathcal E_2 f_2\|_{L^q_{t,x}} \lesssim_{\delta,q} 2^{k_1 \delta} 2^{\frac{k_1(J-2)(d-1)}{2(d+3)}}\|f_1\|_{L^2_\xi}\|f_2\|_{L^2_\xi}.
\end{equation}
\end{lemma}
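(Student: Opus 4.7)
The plan is a standard $\varepsilon$-removal argument in the style of Tao \cite{TaoParab}: we combine the local $L^{q_0}$-bound from Proposition~\ref{P:local rest} (with its $R^\delta$-loss, where $q_0 = \frac{d+3}{d+1}$) against the trivial global $L^2$-estimate \eqref{E:easy L2} by means of a sparse-covering / almost-orthogonality argument. The crucial enabling fact is that the product $\mathcal E_1 f_1 \cdot \mathcal E_2 f_2$ has spacetime Fourier support contained in the bounded set $S_1 + S_2$, so that after mollification at a scale slightly below $R$ its values on widely separated translates of $Q_R$ become essentially independent.

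More concretely, by real interpolation it suffices to estimate $|E_\lambda| := |\{|\mathcal E_1 f_1 \mathcal E_2 f_2| > \lambda\}|$ under the normalization $\|f_j\|_{L^2_\xi}=1$, showing $|E_\lambda| \lesssim (2^{k_1\delta}B/\lambda)^q$ with $B:=2^{k_1(J-2)(d-1)/(2(d+3))}$. Fix a scale $R \geq 1$ to be chosen and cover $E_\lambda$ by $N \lesssim |E_\lambda|/|Q_R|$ translates $\{Q_R^{(n)}\}$ of the anisotropic cube $Q_R$. Proposition~\ref{P:local rest} applied on each cube, combined with Chebyshev and summation, yields the \emph{dense bound}
\begin{equation*}
|E_\lambda| \lesssim N \lambda^{-q_0} \bigl(2^{k_1\varepsilon} R^\varepsilon B\bigr)^{q_0} \qquad (\varepsilon>0 \text{ arbitrary}).
\end{equation*}
Separately, Tao's sparse covering lemma extracts a sub-collection $\{Q_R^{(n_j)}\}_{j=1}^M$ whose members are $R^A$-separated (for a large $A$) yet which still captures at least a $(\log N)^{-C}$-fraction of $E_\lambda$. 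Convolving $\mathcal E_1 f_1 \mathcal E_2 f_2$ against an \emph{anisotropic} bump function at scale $R^{-A/2}$ (with the same aspect ratio as $Q_R$) leaves it essentially unchanged but decouples its values on distinct sparse cubes up to rapidly decaying tails; combining with \eqref{E:easy L2} yields the \emph{sparse bound}
\begin{equation*}
|E_\lambda| \lesssim (\log N)^{C}\,\lambda^{-2}.
\end{equation*}
Optimizing $R$ as a small power of $\lambda^{-1}$ balances these two bounds and, since $q>q_0$, the $R^\varepsilon$-loss is absorbed provided $\varepsilon$ is small enough relative to $\delta$ and $q-q_0$. Summing the resulting dyadic pieces in $\lambda$ produces \eqref{E:epsilon removal}.

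The main obstacle is the sparse-to-$L^2$ almost-orthogonality step. Because $S_1$ has temporal extent $O(2^{-Jk_1})$ while $S_2$ has temporal extent $O(1)$, the mollifying bump must be chosen anisotropically, thinner in time by a factor $\sim 2^{k_1(J-2)}$ so as to match the aspect ratio of $Q_R$. One must then verify that the error terms arising from this convolution decay rapidly in $R^A$ \emph{uniformly in} $k_1$, so that the final constant depends on $k_1$ only through the advertised $2^{k_1\delta}$ and $B$ factors, not through spurious polynomial powers of $2^{k_1}$. Once this Fourier-support/decoupling input is set up correctly, the remainder of the argument is routine combinatorics and optimization, essentially identical to the paraboloid case in \cite{TaoParab}.
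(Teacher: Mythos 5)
Your proposal follows the sparse-covering $\varepsilon$-removal scheme of Tao's Bochner--Riesz paper rather than the route the paper actually takes (a restricted weak-type reduction followed by the Bourgain/Tao--Vargas duality argument: split $\scriptE_2 1$, then $\scriptE_1 1$, into near and far parts at scales $R_2,R_1$ chosen as in \eqref{E:def R2} in terms of $|E|$ and $k_1$, control the far parts by stationary phase plus Stein--Tomas, pass to thickened surfaces, and apply the local estimate \eqref{E:local rest} on translates of boxes of dimensions $2^{k_1(J-2)}R_1\times R_2^d$). A sparse-type argument is not unreasonable in principle, but as written yours has a genuine gap at its core: the decoupling mechanism you invoke is not valid. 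Compact spacetime Fourier support of $\scriptE_1 f_1\scriptE_2 f_2$ in $S_1+S_2$ does not make its values on widely separated translates of $Q_R$ ``essentially independent'' after mollification -- band-limited functions carry no such spatial independence, and no gain over separated boxes can be extracted from support information alone. In every sparse $\varepsilon$-removal argument the separation is exploited through quantitative \emph{decay} of the Fourier transforms of the surface measures, i.e.\ of $\scriptE_j 1$ (stationary phase, hence curvature), and in the present degenerate setting that decay is anisotropic with constants that degenerate in $k_1$ (for $S_1$ the relevant temporal scale is $2^{-k_1(J-2)}$). This is precisely where the advertised power $2^{k_1(J-2)(d-1)/(2(d+3))}$, rather than a spurious power of $2^{k_1}$, has to come from; you flag the $k_1$-uniformity as something ``one must verify'' but supply no mechanism, and without it the sparse step does not exist.

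The quantitative skeleton also does not close as stated. Your ``sparse bound'' $|E_\lambda|\lesssim(\log N)^C\lambda^{-2}$ is weaker than what Chebyshev applied to \eqref{E:easy L2} already gives with no covering at all, and your ``dense bound'' is circular, since $N\sim|E_\lambda|/|Q_R|$ makes it an inequality of the form $|E_\lambda|\lesssim |E_\lambda|\cdot|Q_R|^{-1}(2^{k_1\varepsilon}R^\varepsilon B/\lambda)^{\frac{d+3}{d+1}}$; optimizing $R$ between these two displays cannot produce $|E_\lambda|\lessapprox (2^{k_1\delta}B/\lambda)^{q}$ for $q$ near $\tfrac{d+3}{d+1}$. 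In Tao's actual scheme the sparse ingredient is not a measure bound at all but a restricted-type/local estimate over a \emph{sparse union} of $N$ boxes with constant essentially independent of $N$, proved by almost orthogonality using the decay of $\scriptE_j 1$ at the separation scale, after which a covering lemma decomposes an arbitrary set into boundedly many sparse families at increasing radii. That sparse-union estimate is the statement you would need to formulate and prove here, with the anisotropic boxes $2^{k_1(J-2)}R\times R^d$ and all the $k_1$-bookkeeping carried through the stationary-phase bounds -- in effect the same analysis the paper performs in its duality argument. As it stands, the proposal outlines the shape of an argument but omits the analytic step on which the lemma (and its precise $k_1$-dependence) actually rests.
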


\begin{proof}
The basic argument is essentially that of \cite{BourgainCone, TV1}, but adjustments are needed throughout to account for the degeneracy of $S_2$ and to obtain the precise power in \eqref{E:epsilon removal}.  For the convenience of the reader, we give the brief proof.  

For the remainder of the section, we will use the notation $A \lessapprox B$ if $A \lesssim_\delta 2^{k_1\delta} B$ for each $\delta > 0$.  

Fix a nonnegative $\phi \in C^\infty_{c}(\R^{1+d})$ with $\phi \equiv 1$ on $\{|(t,x)| \leq 1\}$ and $\sum_{m \in \Z^{d+1}} \phi(\cdot-m) \sim 1$.

We will actually prove that if the local estimate \eqref{E:local rest} holds for slightly expanded surfaces,
$$
S_j = \{(h_j(\xi),\xi) : |\xi| < 6 c_0  2^{-k_j}\},
$$
and corresponding $\mathcal E_j$, then the bilinear restriction estimate $R^*(2\times2 \to q)$ holds for all $q > \frac{d+3}{d+1}$, but for simplicity we will gloss over the fact that $6 \neq 1$ by using the same notation for these expanded $S_j,\mathcal E_j$.

By interpolation with the $L^2$ estimate, it suffices to prove the weak type estimates
$$
|\{|\mathcal E_1 f_1 \scriptE_2 f_2| > \lambda\}| \lessapprox 2^{\frac{k_1(J-2)(d-1)}{2(d+3)}} \|f_1\|_{L^2_\xi} \|f_2\|_{L^2_\xi} \lambda^{-q}, \quad \tfrac{d+2}d > q > \tfrac{d+3}{d+1}.
$$
This in turn may be reduced to proving that
\begin{equation} \label{E:EEE_1}
\|\chi_E \mathcal E_1 f_1 \scriptE_2 f_2\|_{L^1_{t,x}} \lessapprox 2^{\frac{k_1(J-2)(d-1)}{2(d+3)}} |E|^{\frac1{q'}} \|f_1\|_{L^2_\xi}\|f_2\|_{L^2_\xi},
\end{equation}
for all Borel sets $E$.

Fix $f_1$.  By duality \eqref{E:EEE_1} would follow from
$$
\|\scriptE_2^*(\chi_E \scriptE_1 f_1 F_2)\|_{L^2_\xi} \lessapprox 2^{\frac{k_1(J-2)(d-1)}{2(d+3)}} |E|^{\frac1{q'}}\|f_1\|_{L^2_\xi} \|F_2\|_{L^\infty_{t,x}}.
$$
By Plancherel,
$$
\|\scriptE_2^*(\chi_E \scriptE_1 f_1 F_2)\|_{L^2_\xi}^2 = \langle (\chi_E\scriptE_1 f_1 F_2)*\scriptE_21,\chi_E\scriptE_1 f_1F_2\rangle.
$$
Let 
\begin{equation} \label{E:def R2}
R_2 := \max\{1,2^{k_1(J-2)\frac2d(\frac{d}{d+2}-\frac{d-1}{d+3})}|E|^{\frac4d(\frac{d+4}{2(d+2)}-\frac1{q'})}\},
\end{equation}
and define functions
\begin{gather*}
\phi_{2,R_2}(t,x) = \phi(\tfrac{t}{R_2}, \tfrac{x}{R_2}), \qquad \phi_{2,R_2}^c = 1-\phi_{2,R_2}\\
\psi_{2,R_2} = \phi_{2,R_2} \scriptE_2 1, \qquad \psi_{2,R_2}^c = \phi_{2,R_2}^c \scriptE_2 1.
\end{gather*}

By stationary phase,
$$
\|\psi_{2,R_2}^c\|_{L^\infty_{t,x}} \lesssim R_2^{-\frac d2}.
$$
By H\"older and Stein--Tomas (rescaled),
$$
\|\chi_E \scriptE_1 f_1 F_2\|_{L^1_{t,x}} \lesssim 2^{\frac{k_1(J-2)d}{2(d+2)}}|E|^{\frac{d+4}{2(d+2)}}\|f_1\|_{L^2_\xi}\|F_2\|_{L^\infty_{t,x}}.
$$
Hence
$$
\langle (\chi_E \scriptE_1 f_1 F_2)*\psi_{2,R_2}^c,\chi_E\scriptE_1 f_1 F_2\rangle \lesssim 2^{\frac{k_1(J-2)d}{d+2}}R_2^{-\frac d2}|E_2|^{\frac{d+4}{d+2}} \|f_1\|_{L^2_\xi}^2 \|F_2\|_{L^\infty_{t,x}}^2.
$$
Using \eqref{E:def R2}, we see that this is acceptable, so we turn to the main term.

Let $\mu_2$ denote surface measure on $S_2$; then (using rapid decay of $\psi_{2,R_2}$),
$$
\scriptF(\psi_{2,R_2}) = \scriptF(\phi_{2,R_2})*\mu_2 \lesssim \sum_{j=0}^\infty 2^{-Mj} R_2 \chi_{S_{2,2^j R^{-1}}}, 
$$
where $M$ is sufficiently large for later purposes and
$$
S_{2,2^jR_2^{-1}} := \begin{cases} \{(\tau,\xi) : |\xi| < 2c_0, \: |\tau-h_2(\xi)| < 2^j R_2^{-1}\}, \quad &2^j \ll R_2 \\
\{(\tau,\xi) : |(\tau,\xi)| < 2^j R_2^{-1}\}, \quad & 2^j \gtrsim R_2.\end{cases}
$$
Using this and Plancherel,
\begin{align*}
\langle (\chi_E \scriptE_1 f_1 F_2) * \psi_{2,R_2},\chi_E \scriptE_1 f_1 F_2 \rangle &= \| \scriptF(\chi_E \scriptE_1 f_1 F_2)\|_{L^2_{\tau,\xi}(\scriptF(\psi_{2,R_2}))}^2 \\
&\lesssim \sum_{j=0}^\infty 2^{-Mj} R_2 \|\scriptF(\chi_E \scriptE_1 f_1 F_2)\|_{L^2_{\tau,\xi}(S_{2,2^j R_2^{-1}})}^2.
\end{align*}
By a simple covering argument (and translation invariance of our inequality), it suffices to consider the $j=0$ case.  We want
$$
\|\scriptF(\chi_E \scriptE_1 f_1 F_2)\|_{L^2_{\tau,\xi}(S_{2,R_2^{-1}})} \lessapprox 2^{\frac{k_1(J-2)(d-1)}{2(d+3)}}R_2^{-\frac12} |E|^{\frac1{q'}} \|f_1\|_{L^2_\xi}\|F_2\|_{L^\infty_{t,x}}.
$$
By Plancherel and duality, this is equivalent to
\begin{equation} \label{E:EEEtilde}
\|\chi_E \scriptE_1 f_1 \tilde\scriptE_2\tilde f_2\|_{L^1_{t,x}} \lessapprox 2^{\frac{k_1(J-2)(d-1)}{2(d+3)}}R_2^{-\frac12}|E|^{\frac1{q'}}\|f_1\|_{L^2_\xi}\|\tilde f_2\|_{L^2_{\tau,\xi}},
\end{equation}
where 
$$
\tilde \scriptE_2 \tilde f_2 = \scriptF^*(\chi_{S_{2,R_2}}\tilde f_2).
$$

Now fix $\tilde f_2 \in L^2$.  By duality and Plancherel, \eqref{E:EEEtilde} is equivalent to 
$$
\langle(\chi_E F_1 \tilde\scriptE_2 \tilde f_2)*\scriptE_1 1, \chi_E F_1 \tilde \scriptE_2 \tilde f_2\rangle \lessapprox 2^{\frac{k_1(J-2)(d-1)}{d+3}} R_2^{-1} |E|^{\frac2{q'}}\|F_1\|_{L^\infty_{t,x}}^2 \|\tilde f_2\|_{L^2_{\tau,\xi}}^2.
$$
Let
$$
R_1 := \max\{1, 2^{-\frac{2k_1(J-2)(d-1)}{d(d+3)}} |E|^{\frac4d(\frac{d+4}{2(d+2)}-\frac1{q'})}\},
$$
and define
\begin{gather*}
\phi_{1,R_1}(t,x) = \phi(\tfrac{t}{2^{k_1(J-2)}R_1},\tfrac x{R_1}), \qquad \phi_{1,R_1}^c = 1-\phi_{1,R_1}\\
\psi_{1,R_1} = \phi_{1,R_1} \scriptE_1 1, \qquad \psi_{1,R_1}^c = \phi_{1,R_1}^c \scriptE_1 1.
\end{gather*}
Using stationary phase\footnote{In fact, a better stationary phase estimate is possible, but we use the one that also works when $h_1$ is replaced by $2^{-(J-2)k_1}g_1$; similarly for \eqref{E:Fpsi1}.}, H\"older, and Stein--Tomas as before,
$$
\langle (\chi_E F_1 \tilde \scriptE_2 \tilde f_2)*\psi_{1,R_1}^c, \chi_E F_1 \tilde \scriptE_2 \tilde f_2\rangle \lesssim R_1^{-\frac d2} R_2^{-1}|E|^{\frac{d+4}{d+2}} \|F_1\|_{L_{t,x}^\infty}^2 \|\tilde f_2\|_{L_\xi^2}^2,
$$
which is acceptable.  

We compute
\begin{equation} \label{E:Fpsi1}
\scriptF(\psi_{1,R_1}) = \scriptF(\phi_{1,R_1})*\mu_1 \lesssim 2^{k_1(J-2)}R_1 \sum_{j=0}^\infty 2^{-Mj} \chi_{S_1,2^{jR^{-1}}}, 
\end{equation}
where
$$
S_{1,2^jR^{-1}} := \begin{cases} \{(\tau,\xi) : |\xi| < 2 c_0 2^{-k_1}, |\tau-h_1(\xi)| < 2^j 2^{-k_1(J-2)} R_1^{-1}\}, \quad & 2^j \ll R\\
\{(\tau,\xi) : |\xi| < 2^j R^{-1}, |\tau| < 2^j 2^{-k_1(J-2)}R_1^{-1}\}, \quad & 2^j \gtrsim R. \end{cases}
$$
Thus to estimate the main term, it suffices to show that
$$
\|\scriptF(\chi_E F_1 \tilde \scriptE_2 \tilde f_2 )\|_{L^2_{\tau,\xi}(S_{1,R_1^{-1}})} \lessapprox 2^{k_1(J-2)(-\frac12 + \frac{d-1}{2(d+3)})}(R_1R_2)^{-\frac12} |E|^{\frac1{q'}} \|F_1\|_{L_{t,x}^\infty} \|\tilde f_2\|_{L_{\tau,\xi}^2},
$$
or equivalently,
\begin{align*}
&\|\chi_E \tilde\scriptE_1\tilde f_1 \tilde\scriptE_2 \tilde f_2\|_{L_{t,x}^1} \\
&\qquad  \lessapprox 2^{k_1(J-2)(-\frac12 + \frac{d-1}{2(d+3)})}(R_1R_2)^{-\frac12}|E|^{\frac1{q'}} \|\tilde f_1\|_{L^2_{\tau,\xi}} \|\tilde f_2\|_{L^2_{\tau,\xi}}, \qquad \tfrac{d+2}d > q > \tfrac{d+3}{d+1},
\end{align*}
where 
$$
\tilde\scriptE_1 \tilde f_1 = \scriptF^*(\chi_{S_{1,R_1}} \tilde f_1).
$$
By H\"older and the definition of $R_1,R_2$, this would follow from
\begin{align} \label{E:EtildeEtilde}
&\|\tilde\scriptE_1 \tilde f_1 \tilde \scriptE_2\tilde f_2\|_{L_{t,x}^\frac{d+3}{d+1}} \\\notag
&\qquad \lesssim_{\delta} 2^{k_1 \delta} (R_1R_2)^{\delta} 2^{k_1(J-2)(-\frac12+\frac{d-1}{2(d+3)})}(R_1R_2)^{-\frac12}\|\tilde f_1\|_{L_{\tau,\xi}^2}\|\tilde f_2\|_{L_{\tau,\xi}^2}, \quad \delta > 0.
\end{align}

In proving \eqref{E:EtildeEtilde}, we may assume that $\supp \tilde f_j \subseteq S_{j,R_j}$, $j=1,2$.  To avoid a proliferation of tildes, we will let $\tilde\scriptE_j f := \scriptF^*(\chi_{S_{j,3R_j}} f)$.  Let $\varphi$ be a smooth non-negative function with $\sum_{m \in \Z^{d+1}} \varphi(\cdot-m) \sim 1$ and $\hat \varphi$ supported in $\{|(\tau,\xi)| \leq 1\}$.  For $(t_0,x_0) \in \R^{1+d}$, define
$$
\varphi^{(t_0,x_0)}_{R_1,R_2}(t,x) = \varphi(\tfrac{t-t_0}{2^{k_1(J-2)}R_1}, \tfrac{x-x_0}{R_2}), \qquad \phi^{(t_0,x_0)}_{R_1,R_2}(t,x) = \phi(\tfrac{t-t_0}{2^{k_1(J-2)}R_1},\tfrac{x-x_0}{R_2}).
$$
Then 
\begin{equation} \label{E:sum is 1}
\sum_{(t_0,x_0)} \phi^{(t_0,x_0)}_{R_1,R_2}(\varphi^{(t_0,x_0)}_{R_1,R_2})^2 \sim 1 \sim \sum_{(t_0,x_0)}\varphi^{(t_0,x_0)}_{R_1,R_2},
\end{equation}
where the sum is taken over $(t_0,x_0) \in (2^{k_1(J-2)}R_1\Z) \times (R_2 \Z)^d$.  

Using the triangle inequality and \eqref{E:sum is 1}, our assumptions on the supports of $\phi,\hat\varphi$, the local restriction estimate with Fubini, and finally Cauchy--Schwarz, Plancherel, and \eqref{E:sum is 1} again,
\begin{align*}
&\|\tilde\scriptE_1 \tilde f_1 \tilde\scriptE_2 \tilde f_2 \|_{L_{t,x}^{\frac{d+3}{d+1}}(\R^{1+d})} \lesssim \sum_{(t_0,x_0)} \|\phi^{(t_0,x_0)}_{R_1,R_2} (\varphi^{(t_0,x_0)}_{R_1,R_2} \tilde\scriptE_1 \tilde f_1)( \varphi^{(t_0,x_0)}_{R_1,R_2} \tilde\scriptE_2\tilde f_2)\|_{L_{t,x}^{\frac{d+3}{d+1}}(\R^{1+d})}\\
&\qquad \lesssim \sum_{(t_0,x_0)} \|\tilde \scriptE_1(\hat{\varphi^{(t_0,x_0)}_{R_1,R_2}} *\tilde f_1)\tilde \scriptE_2(\hat{\varphi^{(t_0,x_0)}_{R_1,R_2}} *\tilde f_2)\|_{L_{t,x}^{\frac{d+3}{d+1}}(Q_{6R_2}^{(t_0,x_0)})}\\
&\qquad \lesssim_{\delta} \sum_{(t_0,x_0)} 2^{k_1\delta}R_2^\delta (2^{k_1(J-2)}R_1R_2)^{-\frac12} 2^{\frac{k_1(J-2)(d-1)}{2(d+3)}}\\
&\qquad\qquad\qquad\qquad \times\|\scriptF(\varphi_{R_1,R_2}^{(t_0,x_0)})*\tilde f_1\|_{L_{\tau,\xi}^2}\|\scriptF(\varphi_{R_1,R_2}^{(t_0,x_0)})*\tilde f_2\|_{L_{\tau,\xi}^2}\\
&\qquad \lesssim_{\delta} 2^{k_1\delta}R_2^\delta (2^{k_1(J-2)}R_1R_2)^{-\frac12} 2^{\frac{k_1(J-2)(d-1)}{2(d+3)}} \|\tilde f_1\|_{L^2_{\tau,\xi}} \|\tilde f_2\|_{L^2_{\tau,\xi}},
\end{align*}
which is what we wanted.  This completes the proof.  
\end{proof}

%%%%%%%%%%%%%%%%%%%%%%%%%%%%%%%%%%%%%%%%%%%%%%%
%%%%%%%%%%%%%%%%%%%%%%%%%%%%%%%%%%%%%%%%%%%%%%%
%%%%%%%%%%%%%%%%%%%%%%%%%%%%%%%%%%%%%%%%%%%%%%%
\section{Induction} \label{S:induction}
%%%%%%%%%%%%%%%%%%%%%%%%%%%%%%%%%%%%%%%%%%%%%%%
%%%%%%%%%%%%%%%%%%%%%%%%%%%%%%%%%%%%%%%%%%%%%%%
%%%%%%%%%%%%%%%%%%%%%%%%%%%%%%%%%%%%%%%%%%%%%%%

Let $\mathcal R^*(2 \times 2 \to \tfrac{d+3}{d+1}; \delta, \alpha)$ denote the statement that the local estimate
\begin{equation}\label{E:R*loc}
\|\scriptE_1 f_1  \scriptE_2 f_2\|_{L^{\frac{d+3}{d+1}}_{t,x}(Q_R)} \lesssim_{\delta,\alpha} 2^{\delta k_1} R^\alpha 2^{\frac{k_1(J-2)(d-1)}{2(d+3)}} \|f_1\|_{L^2_\xi} \|f_2\|_{L^2_\xi}, 
\end{equation}
holds for all $R \geq 1$ and $f_1,f_2 \in L^2_\xi$.  

\begin{lemma}\label{L:base case}
For all $\delta>0$, $\scriptR(2\times2 \to \tfrac{d+3}{d+1};\delta,\tfrac{d^2-1}{2(d+3)}+\tfrac12)$ holds.
\end{lemma}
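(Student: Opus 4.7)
The plan is to prove the base case by a trivial H\"older estimate on the box $Q_R$ combined with the unconditional bilinear $L^2$ bound \eqref{E:easy L2}; no curvature information or bilinear restriction machinery is required, and the computation will actually gain a factor of $R^{1/2}$ over the stated bound.

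First I would compute $|Q_R|$. Since $Q_R$ is a parallelepiped of temporal length $\sim 2^{k_1(J-2)}R$ and spatial diameter $\sim R$, we have $|Q_R|\sim 2^{k_1(J-2)}R^{d+1}$. Next, since $\tfrac{d+1}{d+3}-\tfrac{1}{2}=\tfrac{d-1}{2(d+3)}$, H\"older's inequality on $Q_R$ yields
$$
\|\mathcal E_1 f_1\,\mathcal E_2 f_2\|_{L^{(d+3)/(d+1)}(Q_R)}\leq |Q_R|^{\frac{d-1}{2(d+3)}}\,\|\mathcal E_1 f_1\,\mathcal E_2 f_2\|_{L^2_{t,x}(\R^{1+d})}.
$$
Substituting the volume computation and applying the global bilinear $L^2$ estimate \eqref{E:easy L2}, I would obtain
$$
\|\mathcal E_1 f_1\,\mathcal E_2 f_2\|_{L^{(d+3)/(d+1)}(Q_R)}\lesssim 2^{k_1(J-2)\frac{d-1}{2(d+3)}}\,R^{\frac{d^2-1}{2(d+3)}}\,\|f_1\|_{L^2_\xi}\|f_2\|_{L^2_\xi}.
$$
Because $R\geq 1$, this is strictly stronger than the required inequality by a factor of $R^{1/2}$, and no subexponential factor $2^{\delta k_1}$ is needed, so the conclusion holds for every $\delta\geq 0$.

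There is no real obstacle here; the whole thing is volume counting plus one off-the-shelf fact. The only ingredient to handle with care is the global $L^2\times L^2\to L^2$ estimate \eqref{E:easy L2} itself, which is the bound recorded in the remark after Corollary~\ref{C:bilin Str gen}: it is proved by Plancherel, a change of variables on $d\sigma_1*d\sigma_2$, and the transversality of $S_1$ and $S_2$, with curvature playing no role. The generous $R^{1/2}$ of slack built into the exponent $\alpha=\tfrac{d^2-1}{2(d+3)}+\tfrac{1}{2}$ is presumably present precisely so that the subsequent bootstrap of Section~\ref{S:induction} has room to whittle $\alpha$ down toward $\delta$ one iteration at a time.
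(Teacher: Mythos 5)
Your argument is correct, and it takes a genuinely different (and simpler) route than the paper. The paper's proof also starts from H\"older with the factor $|Q_R|^{\frac{d+1}{d+3}-\frac12}=|Q_R|^{\frac{d-1}{2(d+3)}}$, but it splits the product as $\|\scriptE_1 f_1\|_{L^\infty_{t,x}}\|\scriptE_2 f_2\|_{L^2_{t,x}(Q_R)}$ and then must prove the nontrivial bound $\|\scriptE_2 f_2\|_{L^2_{t,x}(Q_R)}\lesssim_\delta R^{1/2}$ (rather than the trivial $(2^{k_1(J-2)}R)^{1/2}$): this is done by slicing $Q_R$ into $2^{k_1(J-2)}$ cubes $Q_j'$, approximating $\scriptE_2 f_2$ on $Q_j'$ by time-localized data $f_2^{(j)}$ via nonstationary phase (Lemma~\ref{L:f2j error}), and summing with Cotlar--Stein (Lemma~\ref{L:CotlarStein}); that argument needs the restriction $R\gtrsim 2^{\delta k_1(J-2)}$ and is the source of both the $2^{\delta k_1}$ factor and the extra $+\tfrac12$ in $\alpha$. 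You instead keep the product together and invoke the global bilinear $L^2$ estimate \eqref{E:easy L2}, whose constant is uniform in $k_1$ since it uses only transversality ($|\nabla h_1|\ll|\nabla h_2|\sim 1$ on the relevant supports) and the support sizes, exactly as the remark after Corollary~\ref{C:bilin Str gen} indicates; the paper itself relies on \eqref{E:easy L2} with a uniform constant in the interpolation following Proposition~\ref{P:local rest}, so citing it is legitimate. Your conclusion, $\scriptR^*(2\times2\to\tfrac{d+3}{d+1};0,\tfrac{d^2-1}{2(d+3)})$, is strictly stronger than the stated base case and, since any finite $\alpha$ feeds the induction \eqref{E:induction step} equally well, it serves the same purpose with less work and without the lower bound on $R$. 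Two caveats: your closing guess about why the extra $R^{1/2}$ appears is off --- it is simply the loss incurred by the paper's own method, not slack deliberately built in for the bootstrap; and the machinery you bypass ($f_2^{(j)}$, Lemmas~\ref{L:f2j error} and~\ref{L:CotlarStein}) is not dispensable in the paper as a whole, since it is reused in the wave packet decomposition of Section~\ref{S:packet} (Lemma~\ref{L:time loc packet 2} and the verification of \eqref{E:ell2}).
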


Assuming the lemma, Proposition~\ref{P:local rest} would follow from
\begin{equation} \label{E:induction step}
\scriptR^*(2 \times 2 \to \tfrac{d+3}{d+1}; \delta,\alpha) \implies \scriptR^*(2 \times 2 \to \tfrac{d+3}{d+1}; \delta+C\eps', \max\{(1-\eps)\alpha,C\eps\}+C\eps'\}),
\end{equation}
for all $\alpha > 0$ and $1 \gg \delta,\eps,\eps' > 0$.  We will prove \eqref{E:induction step} in Sections~\ref{S:packet}--\ref{S:combinat}, using Wolff's induction on scales argument from \cite{WolffCone}  (more precisely, a variant of Tao's adaptation in \cite{TaoParab}).  We turn now to the proof of Lemma~\ref{L:base case}.  

\begin{proof}[Proof of Lemma~\ref{L:base case}]
Let $\delta > 0$.  We may assume that $R \gtrsim 2^{\delta k_1(J-2)}$.  (For smaller $R$, use the bound for $\|\scriptE_1 f_1 \scriptE_2 f_2\|_{L^{\frac{d+3}{d+1}}_{t,x}(Q_{2^{\delta k_1(J-2)}})}$).  We also assume that $\|f_1\|_{L^2_\xi} = \|f_2\|_{L^2_\xi} = 1$.  

By H\"older's inequality,
\begin{align*}
\|\scriptE_1 f_1 \scriptE_2 f_2\|_{L^{\frac{d+3}{d+1}}_{t,x}(Q_R)} &\lesssim |Q_R|^{\frac{d+1}{d+3}-\frac12}\|\scriptE_1 f_1\|_{L^\infty_{t,x}} \|\scriptE_2 f_2\|_{L^2_{t,x}(Q_R)}\\
&\lesssim 2^{k_1(J-2)\frac{d-1}{2(d+3)}}R^{\frac{d^2-1}{2(d+3)}}\|f_1\|_{L^2_\xi}\|\scriptE_2 f_2\|_{L^2_{t,x}(Q_R)},
\end{align*}
so it suffices to show that 
\begin{equation} \label{E:E2f2L2}
\|\scriptE_2 f_2\|_{L^2_{t,x}(Q_R)} \lesssim_\delta R^{\frac12}.
\end{equation}

When $k_1 = 0$, \eqref{E:E2f2L2} just follows from H\"older's inequality (in the time direction) and Plancherel.  For larger $k_1$, $Q_R$ is tall and thin, so we decompose it as a union of cubes:  
$$
Q_R = \bigcup_{j=0}^{2^{k_1(J-2)}} Q_j',
$$
where 
$$
Q_j' = Q_j \cap Q_R, \:\ctc{and}\: \:Q_j = \{(t,x) : Rj \leq t \leq R(j+1), \: |x| \leq R\}.
$$
The idea of the proof of \eqref{E:E2f2L2} is that on $Q_j'$, $\scriptE_2 f_2$ is well-approximated by a function $f_2^{(j)}$ whose extension is spatially localized at time $Rj$.  Moreover, for $j \neq k$, these pieces are essentially orthogonal.  

To make this heuristic rigorous, fix a smooth, non-negative function $\phi$ with $\phi \equiv 1$ on $\{|\xi| < 2\}$ and $\phi \equiv 0$ off $\{|\xi| < 3\}$.  For $j \in \Z$, define
$$
f_2^{(j)}(\xi) := e^{-iRj h_2(\xi)}\phi(\tfrac\xi{c_0}) [\phi(\tfrac x{CR}) \scriptE_2 f_2(Rj,x)]\,\hat{\:}\,(\xi),
$$
where the inner Fourier transform is taken with respect to the $x$ variable.  

\begin{lemma} \label{L:f2j error}
For $(t,x) \in Q_j'$ and $M \geq 0$,
\begin{equation} \label{E:f2j error}
|\scriptE_2 f_2(t,x) - \scriptE_2 f_2^{(j)}(t,x)| \lesssim_M R^{-M}.
\end{equation}
\end{lemma}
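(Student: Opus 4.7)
The heuristic is that $f_2^{(j)}$ records the wave-packet data of $\scriptE_2 f_2$ at time $Rj$, localized to the spatial ball $\{|x|\lesssim R\}$. Because $h_2$ has bounded gradient on the frequency support, these packets travel at speed $O(1)$, so at time $t$ with $|t-Rj|\leq R$ they remain in $\{|x|\lesssim R\}$. What $f_2^{(j)}$ discards is the contribution at time $Rj$ from $\{|y|\gtrsim CR\}$; this contribution cannot reach $Q_j'$ in time $R$ and will be killed by non-stationary phase.

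To implement this, set $u(y):=\scriptE_2 f_2(Rj,y)$. Ignoring $2\pi$, one has $\hat u(\xi)=e^{iRj h_2(\xi)}f_2(\xi)$, supported in $\{|\xi|<c_0\}$ where $\phi(\xi/c_0)\equiv 1$. Hence
\begin{equation*}
\scriptE_2 f_2(t,x)-\scriptE_2 f_2^{(j)}(t,x)=\int e^{i(x\xi+(t-Rj)h_2(\xi))}\phi(\xi/c_0)\bigl[\hat u(\xi)-\widehat{\phi_{CR} u}(\xi)\bigr]\,d\xi,
\end{equation*}
where $\phi_{CR}(y):=\phi(y/(CR))$. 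Since $\hat u-\widehat{\phi_{CR} u}=-\widehat{(1-\phi_{CR})u}$, Parseval rewrites the difference as
\begin{equation*}
-\int \bigl(1-\phi(y/(CR))\bigr)\,u(y)\,\Psi(x-y,\,t-Rj)\,dy,\quad \Psi(z,s):=\int e^{i(z\xi+s h_2(\xi))}\phi(\xi/c_0)\,d\xi.
\end{equation*}

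Non-stationary phase finishes the proof. On $\supp\phi(\cdot/c_0)$ the assumption $|\nabla g_2|\lesssim 1$ yields $\|\nabla h_2\|_\infty\leq C_0$, so the gradient in $\xi$ of $z\xi+sh_2(\xi)$ has modulus at least $|z|-C_0|s|$. For $(t,x)\in Q_j'$ we have $|x|\leq R$ and $|s|:=|t-Rj|\leq R$, while the integrand is supported on $\{|y|\geq 2CR\}$; choosing $C$ large enough depending only on $C_0$, $|x-y|\geq (2C-1)R\geq 2C_0|s|$, so integration by parts $M$ times yields $|\Psi(x-y,s)|\lesssim_M|x-y|^{-M}$. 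Combining with the trivial bound $|u(y)|\leq\|f_2\|_{L^1_\xi}\lesssim\|f_2\|_{L^2_\xi}\lesssim 1$ (we are in the normalized setting of Lemma~\ref{L:base case}) and integrating over $\{|y|>2CR\}$ produces a bound of $O(R^{d-M})$; arbitrariness of $M$ closes the argument.

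The main---and essentially only---point is to choose $C$ in the definition of $f_2^{(j)}$ large enough, in terms of $\|\nabla h_2\|_\infty$, so that for every $(t,x)\in Q_j'$ the support of $1-\phi(y/(CR))$ sits safely in the non-stationary regime of $\Psi(x-y,t-Rj)$. No deeper obstacle arises.
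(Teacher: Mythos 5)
Your proof is correct and follows essentially the same route as the paper: both express the difference as the tail $(1-\phi(\cdot/(CR)))\,\scriptE_2 f_2(Rj,\cdot)$ paired against an oscillatory kernel and kill it by non-stationary phase (integration by parts in the frequency variable), using $|t-Rj|\leq R$, $|x|\leq R$, and $|y|\gtrsim CR$ to keep the phase gradient large. The only blemish is the harmless sign slip $\hat u-\widehat{\phi_{CR}u}=\widehat{(1-\phi_{CR})u}$ (no minus sign), which does not affect the absolute-value estimate.
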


\begin{lemma} \label{L:CotlarStein}
\begin{equation} \label{E:CotlarStein}
\sum_{j=0}^{2^{k_1(J-2)}} \|f_2^{(j)}\|_{L^2_\xi}^2 \lesssim \|f_2\|_{L^2_\xi}^2.
\end{equation}
\end{lemma}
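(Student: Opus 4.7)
The plan is to reduce to an almost-orthogonality statement for the operators
\[
W_j : L^2_\xi \to L^2_x, \qquad W_j f(x) := \phi(x/CR)\,\scriptE_2 f(Rj,x),
\]
and then verify that almost-orthogonality by a non-stationary phase argument. Because $e^{-iRjh_2(\xi)}$ has modulus one and $|\phi(\xi/c_0)|\leq 1$, Plancherel in $x$ gives
\[
\|f_2^{(j)}\|_{L^2_\xi}^2 \;\leq\; \bigl\|\widehat{\phi(\cdot/CR)\scriptE_2 f_2(Rj,\cdot)}\bigr\|_{L^2_\xi}^2 \;=\; \|W_j f_2\|_{L^2_x}^2,
\]
so the lemma reduces to showing $\sum_j\|W_j f_2\|_{L^2_x}^2\lesssim\|f_2\|_{L^2_\xi}^2$, i.e.\ that $f\mapsto (W_j f)_j$ is a bounded map $L^2_\xi\to\ell^2(L^2_x)$.

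By the usual $TT^*$/Schur reformulation, it suffices to show $\sup_k \sum_j \|W_j W_k^*\|_{L^2_x\to L^2_x} \lesssim 1$. A direct computation shows that $W_j W_k^*$ is the integral operator on $L^2_x$ with kernel
\[
\phi(x/CR)\,\phi(x'/CR)\int e^{iR(j-k) h_2(\xi)+i(x-x')\xi}\chi(\xi)\,d\xi,
\]
where $\chi$ is a smooth cutoff containing the $\xi$-support of $f_2$.

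The key is a non-stationary phase estimate on the inner oscillatory integral. After the affine normalization $\nabla g_2(0)=e_1$, choosing $c_0$ small gives $|\nabla h_2|\geq \tfrac12$ on $\supp\chi$. Combined with $|x-x'|\lesssim CR$ on the spatial supports, this forces
\[
\bigl|R(j-k)\nabla h_2(\xi)+(x-x')\bigr|\;\gtrsim\; R|j-k|
\]
whenever $|j-k|$ exceeds some absolute constant $C_1=O(C)$, so $N$-fold integration by parts in $\xi$ shows the inner integral is $\lesssim_N (R|j-k|)^{-N}$. Schur's test (using $\int\phi(x'/CR)\,dx'\lesssim (CR)^d$) then produces
\[
\|W_jW_k^*\|_{L^2_x\to L^2_x}\;\lesssim\;(CR)^d(R|j-k|)^{-N},\qquad|j-k|\geq C_1.
\]
For the $O(C_1)$ values of $j$ with $|j-k|<C_1$, I will fall back on the trivial bound $\|W_jW_k^*\|\leq\|W_j\|\|W_k\|\lesssim 1$; taking $N>d+1$ makes the tail sum convergent uniformly in $k$ and $R\geq 1$.

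The main obstacle is really just bookkeeping around the non-stationary phase: one has to verify that the threshold $C_1$ separating the ``near-diagonal'' block from the rapidly-decaying tail is independent of $k_1$ and of the enormous range $j\in\{0,\ldots,2^{k_1(J-2)}\}$ of the outer sum. This independence is precisely what the uniform lower bound $|\nabla h_2|\geq\tfrac12$ on $\supp\chi$ — which in turn comes from the ellipticity of $g_2$ and the small choice of $c_0$ — delivers, so the sum remains $O(1)$ no matter how large $2^{k_1(J-2)}$ becomes.
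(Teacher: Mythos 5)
Your underlying approach is the same as the paper's: the heart of both arguments is the almost-orthogonality of the time-sampled, spatially truncated pieces, proved by non-stationary phase in $\xi$ using that the cutoffs $\phi(\cdot/CR)$ force $|x-x'|\lesssim CR$ while the time separation contributes $R|j-k|\,|\nabla h_2|\sim R|j-k|$. The paper packages this as Cotlar--Stein for operators $T_j$ acting on $L^2_\xi$ and, because it sums the off-diagonal errors over \emph{both} indices, it must invoke the standing assumption $R\gtrsim 2^{\delta k_1(J-2)}$ to absorb a factor $2^{k_1(J-2)}R^{-M}$; your Schur-test bound on $\sup_k\sum_j\|W_jW_k^*\|$ sums in one index only and so avoids that assumption. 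This is a mild tidying, not a different method.

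There is, however, one step that is wrong as written, and it is the crux: with your definition of $W_j$, the kernel of $W_jW_k^*$ is $\phi(x/CR)\phi(x'/CR)\int_{\{|\xi|<c_0\}}e^{iR(j-k)h_2(\xi)+i(x-x')\xi}\,d\xi$, where the amplitude is the \emph{sharp} indicator of the ball coming from the domain of integration in $\scriptE_2$ — not ``a smooth cutoff containing the $\xi$-support of $f_2$'' ($W_k^*$ acts on arbitrary $g\in L^2_x$, so the kernel cannot see $\supp f_2$ at all). With a characteristic-function amplitude you cannot integrate by parts $N$ times: boundary terms cap the decay at a fixed power of $(R|j-k|)^{-1}$, and a bound like $\|W_jW_k^*\|\lesssim R^{d}(R|j-k|)^{-1}$ (or even $R^{d}(R|j-k|)^{-(d+1)/2}$) does not sum to $O(1)$ over $j$. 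The repair is exactly the device built into the paper's definition of $T_j$: since $\supp f_2\subseteq\{|\xi|\leq c_0\}$ and $\phi(\cdot/c_0)\equiv 1$ there, redefine $W_jf(x):=\phi(x/CR)\int e^{i(Rjh_2(\xi)+x\xi)}\phi(\xi/c_0)f(\xi)\,d\xi$ (with $h_2$ taken on the slightly enlarged ball, as the paper already allows). This leaves $W_jf_2$ unchanged, and now the composed kernel has the smooth compactly supported amplitude $\phi(\xi/c_0)^2$, all of whose derivatives are $O(1)$, while $|\nabla_\xi(R(j-k)h_2(\xi)+(x-x')\xi)|\gtrsim R|j-k|$ and the higher phase derivatives are $O(R|j-k|)$; the $N$-fold integration by parts and the bound $\lesssim_N (CR)^d(R|j-k|)^{-N}$ are then legitimate, and the rest of your argument goes through as you describe.
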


We postpone the proofs of Lemmas~\ref{L:f2j error} and~\ref{L:CotlarStein} while we complete the proof of \eqref{E:E2f2L2}.  Choosing $M$ sufficiently large depending on $\delta$, and using \eqref{E:f2j error} together with H\"older's inequality, Plancherel, and finally \eqref{E:CotlarStein},
$$
\|\scriptE_2 f_2\|_{L^2_{t,x}(Q_R)} \lesssim 1+(\sum_{j=0}^{2^{k_1(J-2)}} \|\scriptE_2 f_2^{(j)}\|_{L^2_{t,x}(Q_j')}^2)^{\frac12} 
\lesssim 1+(\sum_{j=0}^{2^{k_1(J-2)}} R\|f_2^{(j)}\|_{L^2_\xi}^2)^{\frac12}
\lesssim R^{\frac12},
$$
and \eqref{E:E2f2L2} (and hence Lemma~\ref{L:base case}) is proved.
\end{proof}

\begin{proof}[Proof of Lemma~\ref{L:f2j error}]
Because $\supp f_2 \subseteq \{|\xi| < c_0\}$,
$$
\scriptE_2 f_2(t,x) = \iiint e^{i(t-Rj,x-y)(h_2(\eta),\eta)} \phi(\tfrac\eta{c_0}) e^{i(Rj,y)(h_2(\xi),\xi)} \phi(\tfrac\xi{c_0}) f_2(\xi)\, d\xi\, dy\, d\eta.
$$
Thus
\begin{equation} \label{E:E2-E2j}
\scriptE_2 f_2(t,x) - \scriptE_2 f_2^{(j)}(t,x) = \int P(t,x;\xi) e^{iRj h_2(\xi)} \phi(\tfrac\xi{c_0})f_2(\xi)\, d\xi,
\end{equation}
where
\begin{equation} \label{E:P}
P(t,x;\xi) = \iint e^{i(t-Rj,x-y)(h_2(\eta),\eta)}\phi(\tfrac\eta{c_0}) \, d\eta \, e^{iy\xi}(1-\phi(\tfrac y{CR}))\, dy.
\end{equation}

For $(t,x) \in Q_j'$ and $|y| > CR$, $|t-Rj| \leq R$ and $|x-y| \geq |y|-R$, so
$$
|\nabla_\eta(t-Rj,x-y)(h_2(\eta),\eta)| = |(t-Rj)\nabla h_2(\eta)+(x-y)| \gtrsim |y|,
$$
so integrating by parts in the inner integral of \eqref{E:P},
$$
|P(t,x;\xi)| \lesssim_M \int(1+|y|)^{-(M+d)}(1-\phi(\tfrac y{CR})) \, dy \lesssim R^{-M}.
$$
Inserting this in \eqref{E:E2-E2j} and using H\"older (and $\|f_2\|_{L^2_\xi} \sim 1$) gives
$$
|\scriptE_2 f_2(t,x) - \scriptE_2 f_2^{(j)}(t,x)| \lesssim_M R^{-M}\|f_2\|_{L^1_\xi} \lesssim R^{-M}.
$$
\end{proof}

\begin{proof}[Proof of Lemma~\ref{L:CotlarStein}]
Define
$$
T_j f(\xi) = e^{-iR j h_2(\xi)}\phi(\tfrac{\xi}{c_0})[\phi(\tfrac\cdot{CR})\scriptE_2 (\phi(\tfrac{\eta}{c_0}) f(\eta))(Rj,\cdot)]\hat{\:}(\xi).
$$
Then by the support condition on $f_2$, $f_2^{(j)} = T_j f_2$.  Each $T_j$ is self-adjoint.  When $|k-j| \gg 1$, we compute
$$
T_k^*T_j f(\xi) = e^{-iRkh_2(\xi)}\phi(\tfrac{\xi}{c_0}) \int K_{jk}(\xi,\zeta) e^{i R_j h_2(\zeta)} \phi(\tfrac{\zeta}{c_0})f(\zeta)\, d\zeta,
$$
where
$$
K_{jk}(\xi,\zeta) = \iint e^{i(y\zeta-x\xi)}\phi(\tfrac y{CR})\phi(\tfrac x{CR}) \int e^{i(Rk-Rj,x-y)(h_2(\eta),\eta)}\phi(\tfrac\eta{c_0})^2\, d\eta\, dx\, dy.
$$
On the support of the integrand, $|x-y| \lesssim R$ so for $|k-j| \gg 1$, 
$$
|\nabla_\eta [(Rk-Rj,x-y)\cdot(h_2(\eta),\eta)]| \gtrsim R|k-j|.
$$
  Integrating by parts $M+2d$ times in the inner integral and using H\"older's inequality,
$$
|K_{jk}(\xi,\zeta)| \lesssim_M R^{-M}|k-j|^{-M}.
$$
Applying H\"older's inequality again, we thus see that 
\begin{equation} \label{E:CShypo}
\|T_k^*T_j f\|_{L^2_\xi} = \|T_kT_j^* f\|_{L^2_\xi} \lesssim R^{-M}(1+|k-j|)^{-M}\|f\|_{L^2_\xi}, \qquad f \in L^2;
\end{equation}
by Plancherel, this is also valid for $|k-j| \lesssim 1$.  

By \eqref{E:CShypo} and Cotlar--Stein, 
\begin{align*}
&\|f\|_{L^2_\xi}^2 \gtrsim \|\sum_{j =1}^{2^{k_1(J-2)}} T_j f\|_{L^2_\xi}^2 
= \sum_{j =1}^{2^{k_1(J-2)}}  \|T_j f\|_{L^2_\xi}^2 + \sum_{1 \leq j \neq k \leq 2^{k_1(J-2)}} \langle T_j f,T_k f\rangle \\
&\qquad \geq \sum_{j =1}^{2^{k_1(J-2)}}  \|T_j f\|_{L^2_\xi}^2 -C_M \sum_{j =1}^{2^{k_1(J-2)}}  \sum_{k \neq j} R^{-M}|j-k|^{-M}\|f\|_{L^2_\xi}^2 \\
& \qquad \geq \sum_{j =1}^{2^{k_1(J-2)}}  \|T_j f\|_{L^2_\xi}^2 - C_M2^{k_1(J-2)}R^{-M}\|f\|_{L^2_\xi}^2.
\end{align*}
Using our lower bound $R \gtrsim 2^{k_1(J-2)\delta}$, we obtain \eqref{E:CotlarStein}.  
\end{proof}

\subsection*{Notation} We recycle notation, and will say for the remainder of the article that $A \lessapprox B$ if $A \lesssim_{\eps} 2^{\eps k_1}R^\eps B$ for all $\eps > 0$.  

Thus we want to show that 
\begin{equation} \label{E:local approx}
\|\mathcal E_1 f_1 \mathcal E_2 f_2\|_{L^{\frac{d+3}{d+1}}_{t,x}(Q_R)} \lessapprox (R^{(1-\eps)\alpha}+R^{C\eps})2^{k_1\delta} 2^{\frac{k_1(J-2)(d-1)}{2(d+3)}}\|f_1\|_{L^2_\xi}\|f_2\|_{L^2_\xi},
\end{equation}
and we assume (for the remainder of the argument) that $\scriptR(2\times2 \to \tfrac{d+3}{d+1};\delta,\alpha)$ holds, that $R \gtrsim 2^{k_1(J-2)\delta}$, and $\|f_1\|_{L^2_\xi} = \|f_2\|_{L^2_\xi} = 1$.

%%%%%%%%%%%%%%%%%%%%%%%%%%%%%%%%%%%%%%%%
%%%%%%%%%%%%%%%%%%%%%%%%%%%%%%%%%%%%%%%%
%%%%%%%%%%%%%%%%%%%%%%%%%%%%%%%%%%%%%%%%
\section{Wave packet decomposition} \label{S:packet}
%%%%%%%%%%%%%%%%%%%%%%%%%%%%%%%%%%%%%%%%
%%%%%%%%%%%%%%%%%%%%%%%%%%%%%%%%%%%%%%%%
%%%%%%%%%%%%%%%%%%%%%%%%%%%%%%%%%%%%%%%%

We recall that 
$$
Q_R := \{(t,x) : \tfrac12 2^{k_1(J-2)}R \leq t \leq 2^{k_1(J-2)}R, \: |x| \leq R\}.
$$
Define
$$
X_j := R^{\frac12} \Z^d, \qquad \Xi_j := (\R^{-\frac12}\Z^d) \cap B(0,4 c_0 2^{-k_j}), \qquad V_j := \nabla h_j(\Xi_j), \qquad j=1,2.
$$
For $j=1,2$, an $S_j$-tube is a set of the form
$$
T_j = \{(t,x) : |x-x_j(T_j) + tv_j(T_j)| < R^{\frac12}\},
$$
where $x_j(T_j) \in X_1$ and $v_j(T_j) \in V_j$.  

\begin{proposition} \label{P:packet}
There exist coefficients $(c_{T_j})$ and wave packets $(\phi_{T_j})$, indexed in those $S_j$-tubes $T_j$ satisfying $\dist(T_j,Q_R) \lesssim R$, such that for any $M > 0$,
\begin{equation}
\label{E:decomp}
\|\scriptE_1 f_1 \scriptE_2f_2\|_{L^\frac{d+3}{d+1}_{t,x}(Q_R)} \lesssim \|\sum_{T_1} c_{T_1}\phi_{T_1} \sum_{T_2} c_{T_2}\phi_{T_2}\|_{L^\frac{d+3}{d+1}_{t,x}(Q_R)} + O(1).
\end{equation}
Furthermore, the following hold for each $j=1,2$ and every tube $T_j$ appearing in the sum:
\begin{gather}
\label{E:ell2}
\|(c_{T_j})\|_{\ell^2_{T_j}} \lesssim 1\\
\label{E:free wave}
\phi_{T_j} = \scriptE_j \hat{\phi_{T_j}(0,\cdot)}\\
\label{E:support tube}
\supp \hat{\phi_{T_j}(0,\cdot)} \subseteq \{|\xi-\xi_j(T_j)| \lesssim R^{-1/2}\}\\
\label{E:decay tube}
|\phi_{T_j}(t,x)| \lesssim R^{-\frac d4}(1+\tfrac{|x-x_j(T_j) + tv_j(T_j)|}{R^{1/2}})^{-M}, \qquad (t,x) \in Q_R\\
\label{E:ortho tube}
\|\sum_{T_j} c_{T_j}' \phi_{T_j}(t,\cdot)\|_{L^2_x} \lesssim \|c_{T_j}'\|_{\ell^2_{T_j}}, \quad \ctc{for all} (c_{T_j}') \in \ell^2_{T_j}, \: t \in \R.
\end{gather}
\end{proposition}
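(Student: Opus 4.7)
The plan is a Córdoba-type wave packet decomposition, performed symmetrically for the two surfaces $S_1, S_2$ and multiplied at the end. First, I would frequency-localize each $f_j$ at scale $R^{-1/2}$: fix a Schwartz bump $\eta \in C_c^\infty(\R^d)$ supported in $B(0,1)$ and write $f_j = \sum_{\xi_j \in \Xi_j} \eta_{\xi_j}(\cdot) f_j$ with $\eta_{\xi_j}(\xi) := \eta(R^{1/2}(\xi - \xi_j))$. Each piece is frequency-supported in $B(\xi_j, R^{-1/2})$, and expanding it as a Fourier series on a cube of side $\sim R^{-1/2}$ gives
\begin{equation*}
\eta_{\xi_j}(\xi) f_j(\xi) = \sum_{x_j \in R^{1/2}\Z^d} c_{(x_j,\xi_j)}\, R^{d/4}\, e^{-ix_j \cdot (\xi - \xi_j)}\,\tilde\eta_{\xi_j}(\xi),
\end{equation*}
where $\tilde\eta_{\xi_j}$ is a mildly enlarged bump equal to 1 on $\supp\eta_{\xi_j}$, and $c_{(x_j,\xi_j)}$ is the Fourier coefficient. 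I would then index $S_j$-tubes by $T_j = (x_j,\xi_j)$ and define
\begin{equation*}
\phi_{T_j}(t,x) := \mathcal{E}_j\bigl[R^{d/4}\, e^{-ix_j\cdot(\xi-\xi_j)}\,\tilde\eta_{\xi_j}\bigr](t,x);
\end{equation*}
this delivers \eqref{E:free wave} and \eqref{E:support tube} immediately. Properties \eqref{E:ell2} and \eqref{E:ortho tube} would then follow from Plancherel applied to the Fourier series, together with the approximate frequency disjointness of the family $\{\eta_{\xi_j}\}_{\xi_j\in\Xi_j}$ (and, for \eqref{E:ortho tube}, an additional Plancherel in $x$ at fixed $t$).

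The principal obstacle is the wave packet decay \eqref{E:decay tube}. Substituting $\xi = \xi_j + R^{-1/2}\zeta$ converts the extension integral defining $\phi_{T_j}$ into
\begin{equation*}
\phi_{T_j}(t,x) = R^{-d/4}\, e^{i\Phi_0(t,x;\xi_j)} \int e^{i\Psi_j(t,x;\zeta)}\, \tilde\eta(\zeta)\, d\zeta,
\end{equation*}
where
\begin{equation*}
\Psi_j(t,x;\zeta) = R^{-1/2}(x - x_j + t v_j)\cdot\zeta + \tfrac{t}{2R}\,\zeta^\top D^2 h_j(\xi_j)\,\zeta + O\bigl(tR^{-3/2}|\zeta|^3\bigr).
\end{equation*}
Iterated integration by parts in $\zeta$ produces the claimed $(1 + R^{-1/2}|x - x_j + tv_j|)^{-M}$ decay provided the quadratic and higher-order terms in $\Psi_j$ are uniformly bounded on $\supp\tilde\eta$. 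The scaling $h_j(\xi) = 2^{-Jk_j}g_j(2^{k_j}\xi)$ yields $\|D^\alpha h_j\|_\infty \lesssim 2^{-(J-2)k_j}$ for $|\alpha|\geq 2$, so for $j=1$ and $t \lesssim 2^{k_1(J-2)}R$ one has $t\|D^2 h_1\|/R \lesssim 1$ essentially by design of the time scale of $Q_R$. The analogous estimate for $j=2$ (where $k_2 = 0$) is the hardest step, and I expect it to require either a finer frequency discretization adapted to $S_2$ or a slab-by-slab decomposition of $Q_R$ into time intervals of length $\sim R$ before one can apply non-stationary phase.

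Finally, to reduce the sum in \eqref{E:decomp} to tubes with $\dist(T_j, Q_R) \lesssim R$, I would observe that \eqref{E:decay tube} gives $|\phi_{T_j}(t,x)| \lesssim R^{-M}$ pointwise on $Q_R$ for tubes at distance $\gg R$. Their aggregate contribution to the left-hand side of \eqref{E:decomp} is $O(R^{-M+C})$ after H\"older's inequality and the trivial $L^\infty$ bound on $\mathcal{E}_1 f_1,\mathcal{E}_2 f_2$, which is $O(1)$ once $M$ is chosen large enough.
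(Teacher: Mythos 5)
Your construction is fine for the $S_1$ packets and for the soft properties \eqref{E:free wave}--\eqref{E:support tube}, \eqref{E:ell2}, \eqref{E:ortho tube} of a single-scale decomposition, but it has a genuine gap exactly at the point you flag: the decay estimate \eqref{E:decay tube} for $j=2$. Since $k_2=0$, the quadratic term in your phase $\Psi_2$ is of size $t\|D^2h_2\|/R \sim 2^{k_1(J-2)}$ at the top of $Q_R$, so non-stationary phase fails there; worse, this is not a technical loss but a real phenomenon: a packet built from data at time $0$ disperses to spatial width $\sim 2^{k_1(J-2)}R^{1/2} \gg R^{1/2}$ (indeed $\gg R$ for $R$ in the relevant range), so \eqref{E:decay tube} is simply false for the packets you define when $j=2$. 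Saying the fix ``may require a finer discretization or a slab-by-slab decomposition'' leaves the decisive step of the proposition unproved.

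Moreover, even granting the slab-by-slab route (which is what the paper does: decompose $Q_R$ into $\sim 2^{k_1(J-2)}$ time slabs $Q_j'$ of length $R$, replace $\scriptE_2 f_2$ on $Q_j'$ by $\scriptE_2 f_2^{(j)}$ with $f_2^{(j)}$ the spatially truncated data at time $Rj$ as in Lemma~\ref{L:f2j error}, and apply Tao's elliptic decomposition, Lemma~\ref{L:time loc packet 2}, on each slab), your proposal is missing the ingredient that makes this compatible with \eqref{E:ell2}: a naive application gives coefficients of $\ell^2$ norm $\lesssim \|f_2^{(j)}\|_{L^2}\lesssim 1$ on each of the $2^{k_1(J-2)}$ slabs, hence a total $\ell^2$ norm of order $2^{k_1(J-2)/2}$, which destroys the gain $2^{\frac{k_1(J-2)(d-1)}{2(d+3)}}$ in Proposition~\ref{P:local rest}. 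The paper recovers \eqref{E:ell2} via the almost-orthogonality bound $\sum_j\|f_2^{(j)}\|_{L^2_\xi}^2 \lesssim \|f_2\|_{L^2_\xi}^2$ of Lemma~\ref{L:CotlarStein} (a Cotlar--Stein argument that uses $R \gtrsim 2^{\delta k_1(J-2)}$), together with the bookkeeping needed to reassemble the per-slab decompositions into the single product estimate \eqref{E:decomp}: tubes from slab $j$ carry shifted centers $x_2 - Rj\nabla h_2(\xi_2)$, and one sums only over $C$-separated families of slabs and pigeonholes to pick one family. None of this appears in your outline, so as written the argument does not yield the proposition with the stated uniformity in $k_1$. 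Your treatment of $j=1$ and of the far tubes in \eqref{E:decomp} is consistent with the paper's (rescaled standard decomposition plus rapid decay), and those parts are fine.
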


The proof of this proposition will occupy the remainder of the section.  

We begin with the decomposition of $\scriptE_2f_2$.  Heuristically, an $S_2$-wave packet is concentrated on a tube that is transverse to the long axis of $Q_R$, so on $Q_R$ it should be concentrated on a tube of diameter $R^{\frac12}$ and length $R$.  Unfortunately, this heuristic neglects the role of dispersion, which means that we cannot simply decompose the ``initial data'' $\scriptE_2 f_2(0,\cdot)$ into pieces with Fourier support on $R^{-\frac12}$ balls and spatial concentration on $R^{\frac12}$ balls, and then propagate that decomposition forward.  Instead, we will apply Tao's elliptic wave packet decomposition \cite{TaoParab} to $\scriptE_2 f_2^{(j)}$ on $Q_j'$.  The precise statement we need is as follows.

\begin{lemma}[\cite{TaoParab}] \label{L:time loc packet 2}
For each $0 \leq j \leq 2^{k_1(J-2)}$, there exist coefficients $(c_{T_2}^{(j)})$ and wave packets $(\phi_{T_2}^{(j)})$, indexed in those tubes $T_2$ with $\dist(T_2,Q_j') \lesssim R$, that satisfy \emph{(\ref{E:free wave}-\ref{E:ortho tube})}, with the superscripts $(j)$ inserted, as well as 
\begin{gather} 
\label{E:time loc ell22}
\|(c_{T_2}^{(j)})\|_{\ell^2_{T_2}} \lesssim \|f_2^{(j)}\|_{L^2_\xi},\\
\label{E:time loc decomp2}
\scriptE_2 f_2^{(j)}(t,x) = \sum c_{T_2}^{(j)} \phi_{T_2}^{(j)} + O(R^{-M}), \qquad M > 0, \: (t,x) \in Q_j'.
\end{gather}
\end{lemma}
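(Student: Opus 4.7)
This lemma is Tao's elliptic wave packet decomposition from \cite{TaoParab} applied to the extension operator $\mathcal{E}_2$ on a cube of sidelength $\sim R$ at the position of $Q_j'$. Since $S_2$ is elliptic at unit scale and $f_2^{(j)}$ is Fourier-supported in $\{|\xi| < 3c_0\}$, the hypotheses of Tao's construction are met directly; the only adaptation required is a bookkeeping step, to reconcile the convention of indexing tubes by their position at $t = 0$ with the fact that the natural physical-space decomposition of $\mathcal{E}_2 f_2^{(j)}$ takes place at the center time $t = Rj$ of $Q_j'$.

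First I would fix a smooth frequency partition of unity $\sum_{\xi_0 \in \Xi_2}\chi_{\xi_0} \equiv 1$ on $B(0, 3c_0)$, each $\chi_{\xi_0}$ supported in an $O(R^{-1/2})$-ball about $\xi_0$, together with a fattened bump $\psi_{\xi_0}$ equal to $1$ on $\supp\chi_{\xi_0}$. Since the lattice $R^{1/2}\Z^d = X_2$ is dual to balls of radius $R^{-1/2}$, the modulated bumps $\{e^{-ix_0 \cdot \xi} \psi_{\xi_0}(\xi) : x_0 \in X_2\}$ form a nearly tight frame for the space of $L^2_\xi$-functions supported in $\supp\psi_{\xi_0}$; expanding
$$
\chi_{\xi_0}(\xi)\, f_2^{(j)}(\xi) \;=\; \sum_{x_0 \in X_2} c_{T_2}^{(j)}\, e^{-ix_0 \cdot \xi}\, \psi_{\xi_0}(\xi), \qquad T_2 := (x_0, \xi_0),
$$
yields coefficients with $\|(c_{T_2}^{(j)})\|_{\ell^2_{T_2}} \lesssim \|f_2^{(j)}\|_{L^2_\xi}$ by Plancherel, which is \eqref{E:time loc ell22}. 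I would then define the wave packets by
$$
\phi_{T_2}^{(j)}(t, x) \;:=\; \mathcal{E}_2\bigl(e^{-ix_0 \cdot \xi}\psi_{\xi_0}\bigr)(t, x),
$$
so that \eqref{E:free wave} and \eqref{E:support tube} are built in.

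Next I would verify \eqref{E:decay tube} and \eqref{E:ortho tube}. For \eqref{E:decay tube}, the trivial bound gives $|\phi_{T_2}^{(j)}| \lesssim R^{-d/2}$, while integration by parts in $\xi$ against the phase $t h_2(\xi) + (x - x_0)\cdot\xi$ -- whose gradient on $\supp\psi_{\xi_0}$ equals $x - x_0 + t v_2(T_2) + t D^2 h_2(\xi_0)(\xi - \xi_0) + O(tR^{-1})$ -- supplies decay in $|x - x_0 + t v_2(T_2)|$ outside a stationary set of size $O(t R^{-1/2})$; combining with the stationary-phase bound on that set (of size $\lesssim t^{-d/2}$) yields \eqref{E:decay tube} on $Q_R$ in the standard way. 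The fixed-time orthogonality \eqref{E:ortho tube} then follows from Plancherel: distinct $\xi_0$ contribute essentially disjoint $R^{-1/2}$-frequency cells, and within a single cell the $X_2$-lattice modulations are orthonormal.

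The reproducing identity \eqref{E:time loc decomp2} is the main content, and its only subtlety -- which I would regard as the main obstacle -- is the restriction of the sum to tubes with $\dist(T_2, Q_j') \lesssim R$. Summing the Gabor expansion over all $(x_0, \xi_0) \in X_2 \times \Xi_2$ formally reconstructs $\mathcal{E}_2 f_2^{(j)}$ everywhere. The tubes omitted from the final sum are those whose time-$Rj$ center $x_0 - Rj v_2(T_2)$ lies far outside the spatial support $\{|y| \lesssim R\}$ of the data $\phi(\cdot/CR)\, \mathcal{E}_2 f_2(Rj, \cdot)$ that defines $f_2^{(j)}$. For such tubes the coefficient $c_{T_2}^{(j)}$ is $O(R^{-M})$ by the rapid decay of the Gabor coefficients of a spatially localized function, so discarding them contributes only an $O(R^{-M})$ error on $Q_j'$, establishing \eqref{E:time loc decomp2}.
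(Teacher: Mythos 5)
There is a genuine gap here, and it is precisely the dispersion issue flagged at the start of Section~\ref{S:packet}. Your construction expands $f_2^{(j)}$ \emph{at time $t=0$} into Gabor atoms $e^{-ix_0\cdot\xi}\psi_{\xi_0}$ of frequency width $R^{-1/2}$ and then extends. Such a packet is concentrated in the tube through $(0,x_0)$ with velocity $v_2(\xi_0)$ only for times $|t|\lesssim R$: since $D^2h_2\sim 1$, the group velocity varies by $\sim R^{-1/2}$ across the cell, so at time $t$ the packet is spread over a set of width $\sim |t|R^{-1/2}$ about $x_0-t\nabla h_2(\xi_0)$. The lemma is used on $Q_j'\subseteq Q_R$, where $t\sim Rj$ with $j\sim 2^{k_1(J-2)}$, and the only standing bound is $R\gtrsim 2^{\delta k_1(J-2)}$, so typically $j\gg R^{1/2}$ and the spread $jR^{1/2}$ dwarfs the tube width $R^{1/2}$ (it can even exceed $R$). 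Then \eqref{E:decay tube} fails for any fixed $M>d/2$: stationary phase gives height $\approx R^{d/4}t^{-d/2}$ on a set of radius $\sim tR^{-1/2}$ around the tube axis, which exceeds the claimed bound $R^{-d/4}(tR^{-1})^{-M}$ there; your own sketch concedes a stationary set of size $O(tR^{-1/2})$, which is not the $R^{1/2}$-localization that the lemma and the later arguments (e.g.\ $|\phi_{T_2}|\lesssim R^{-M}$ off $R^\eps q$) require. The same chirp defeats your last step: $f_2^{(j)}=e^{-iRjh_2(\xi)}g_j$ with only $\check g_j$ localized in $\{|x|\lesssim CR\}$, and $Rjh_2$ varies quadratically by $\sim j\gg1$ across each $R^{-1/2}$ cell, so the Gabor coefficients at frequency $\xi_0$ are spread over $|x_0-Rj\nabla h_2(\xi_0)|\lesssim R+jR^{1/2}$. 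Hence tubes at distance $\gg R$ from $Q_j'$ carry non-negligible coefficients, discarding them is not an $O(R^{-M})$ error, and \eqref{E:time loc decomp2} is not established. (There is also a normalization slip: with atoms of $L^2$ norm $\sim R^{-d/4}$, Plancherel gives $\|(c^{(j)}_{T_2})\|_{\ell^2}\sim R^{d/4}\|f_2^{(j)}\|_{L^2}$, not \eqref{E:time loc ell22}; but that is mere bookkeeping.)

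The repair is to base the decomposition at time $Rj$ rather than $t=0$, which is what the paper does: since $\scriptE_2 f_2^{(j)}(t,x)=\scriptE_2\bigl[e^{iRjh_2}f_2^{(j)}\bigr](t-Rj,x)$ and $e^{iRjh_2}f_2^{(j)}$ is a frequency cutoff of the transform of data supported in $\{|y|\lesssim CR\}$, one applies the standard ($j=0$) wave packet decomposition of \cite{TaoParab} to this function on $Q_0'$ and then translates the packets in time by $Rj$; the associated tube then has spatial parameter $x_2-Rj\nabla h_2(\xi_2)$, i.e.\ it passes near $Q_j'$, and over a time interval of length $R$ the dispersion from an $R^{-1/2}$ cell is only $O(R^{1/2})$, so \eqref{E:decay tube} and the restriction to tubes with $\dist(T_2,Q_j')\lesssim R$ are legitimate. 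Equivalently, in your language, the atoms must carry the chirp $e^{-iRjh_2(\xi)}$ inside each frequency cell (atoms $e^{-iRjh_2(\xi)}e^{-ix_0\cdot\xi}\psi_{\xi_0}$); with that change your argument becomes the paper's.
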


\begin{proof} 
For $j=0$, this follows from the wave packet decomposition in \cite{TaoParab}.  Given any $1 \leq j \leq 2^{k_1(J-2)}$, we may decompose
$$
\scriptE_2[e^{iRjh_2(\xi)} f_2^{(j)}](t,x) = \sum_{T_2} c_{T_2} \phi_{T_2} + O(R^{-M}), \ctc{on} Q_0'.
$$
Now we translate.  Our constants are the same: $c_{T_2}^{(j)} := c_{T_2}$, but our wave packets are shifted: $\phi_{T_2}^{(j)}(t,x) := \phi_{T_2}(t-Rj,x)$.  Thus $\phi_{T_2}^{(j)}$ is associated to a tube with parameters $x_j-Rj\nabla h_j(\xi_2)$ and $\xi_2$, where $x_2,\xi_2$ are the parameters for $\phi_{T_2}$.  The conclusions claimed in the lemma are then immediate from those obtained in the case $j=0$, and we are done.  
\end{proof}

Now let $\Lambda \subseteq \{0,1,\ldots,2^{k_1(J-2)}\}$ be a $C$-separated set for some sufficiently large $C$.  Applying the decomposition in Lemma~\ref{L:time loc packet 2} to each of the functions $f_2^{(j)}$, and then using the estimate in Lemma~\ref{L:f2j error}, together with the assumption $R \gtrsim 2^{\delta k_1}$, we obtain
\begin{equation} \label{E:decomp2}
\scriptE_2 f_2(t,x) = \sum_{T_2} c_{T_2}\phi_{T_2} + O(R^{-M}), \qquad (t,x) \in \bigcup_{j \in \Lambda} Q_j',
\end{equation}
where the tubes appearing in the sum all lie within a distance $O(R)$ of one of the $Q_j'$ with $j \in \Lambda$.  The conclusions (\ref{E:free wave}-\ref{E:decay tube}) follow immediately from Lemma~\ref{L:time loc packet 2}.  Inequality \eqref{E:ell2} just follows from \eqref{E:time loc ell22} and Lemma~\ref{L:CotlarStein}, and finally, \eqref{E:ortho tube} is just a consequence of the corresponding conclusion (with superscripts $(j)$ inserted) in Lemma~\ref{L:time loc packet 2}.  

Now we turn to the wave packet decomposition of $\scriptE_1 f_1$, which is essentially a rescaling of the elliptic case.

\begin{lemma} \label{L:packet1}
There exist coefficients $(c_{T_1})$ and wave packets $(\phi_{T_1})$, indexed in those tubes with $\dist(T_1,Q_R) \lesssim R$ and satisfying \emph{(\ref{E:ell2}-\ref{E:ortho tube})}, as well as
\begin{equation} \label{E:decomp1}
\scriptE_1 f_1(t,x) = \sum_{T_1} c_{T_1} \phi_{T_1} + O(R^{-M}), \qquad (t,x) \in Q_R.
\end{equation}
\end{lemma}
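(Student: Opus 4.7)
The plan is to deduce the lemma by parabolically rescaling the standard elliptic wave packet decomposition of \cite{TaoParab}. Since $h_1(\xi)=2^{-Jk_1}g_1(2^{k_1}\xi)$, the change of variables $\tilde t:=2^{-Jk_1}t$, $\tilde x:=2^{-k_1}x$, $\eta:=2^{k_1}\xi$ converts $\mathcal E_1 f_1(t,x)$ into $\tilde{\mathcal E}\tilde f_1(\tilde t,\tilde x)$, where $\tilde{\mathcal E}$ is the standard elliptic extension operator for $g_1$ on $B(0,c_0)$ and $\tilde f_1(\eta):=2^{-k_1 d}f_1(2^{-k_1}\eta)$, so that $\|\tilde f_1\|_{L^2}=2^{-k_1 d/2}\|f_1\|_{L^2}$. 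Under this rescaling $Q_R$ becomes $\tilde Q_R$, a box of temporal extent $\tilde R:=2^{-2k_1}R$ and spatial extent $2^{k_1}\tilde R=2^{-k_1}R$.

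I would then apply Tao's wave packet decomposition at scale $\tilde R$ to $\tilde{\mathcal E}\tilde f_1$ and pull back, after renormalizing by $c_{T_1}:=2^{k_1 d/2}\tilde c_{\tilde T_1}$ and $\phi_{T_1}(t,x):=2^{-k_1 d/2}\tilde\phi_{\tilde T_1}(\tilde t,\tilde x)$, so that $c_{T_1}\phi_{T_1}=\tilde c_{\tilde T_1}\tilde\phi_{\tilde T_1}$. The scale $\tilde R$ is chosen so that rescaled tubes of spatial width $\tilde R^{1/2}=2^{-k_1}R^{1/2}$ pull back to tubes of spatial width $R^{1/2}$ in the original coordinates, the rescaled position lattice $\tilde R^{1/2}\Z^d$ pulls back to $X_1=R^{1/2}\Z^d$, and the rescaled frequency lattice $\tilde R^{-1/2}\Z^d\cap B(0,4c_0)$ pulls back to $\Xi_1=R^{-1/2}\Z^d\cap B(0,4c_0 2^{-k_1})$. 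A short computation using $\nabla h_1(\xi)=2^{-(J-1)k_1}\nabla g_1(2^{k_1}\xi)$ shows that the rescaled tube inequality $|\tilde x-\tilde x_0+\tilde t\nabla g_1(\eta_0)|<\tilde R^{1/2}$ becomes precisely $|x-x_1(T_1)+tv_1(T_1)|<R^{1/2}$. Each of the six conclusions \eqref{E:ell2}--\eqref{E:ortho tube} and \eqref{E:decomp1} then follows from the corresponding property in Tao's construction: \eqref{E:ell2} becomes $\|c_{T_1}\|_{\ell^2}\lesssim 2^{k_1 d/2}\|\tilde f_1\|_{L^2}=\|f_1\|_{L^2}=1$; the $L^\infty$ decay $R^{-d/4}$ in \eqref{E:decay tube} emerges from the cancellation between the renormalization factor $2^{-k_1 d/2}$ and $\tilde R^{-d/4}=2^{k_1 d/2}R^{-d/4}$; and the error $O(\tilde R^{-M})$ in \eqref{E:decomp1} becomes $O(R^{-M'})$ for any desired $M'$ thanks to the standing assumption $R\gtrsim 2^{k_1(J-2)\delta}$.

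The one technical subtlety is that $\tilde Q_R$ is not a cube of side $\tilde R$ but a box with aspect ratio $2^{k_1}$ between space and time. I would resolve this by covering $\tilde Q_R$ with $\lesssim 2^{k_1 d}$ spatial translates of an $\tilde R$-cube and patching the resulting Tao decompositions; patching is consistent because the wave packets are indexed by tubes living in fixed position and frequency lattices common to all sub-cubes, and each wave packet is spatially concentrated on its tube, so only $O(1)$ sub-cubes contribute to any given tube's description. This patching is the main step requiring care, but it amounts to bookkeeping rather than analysis: only the single surface $S_1$ appears in the lemma, with neither curvature-based nor transversality-based arguments invoked, so all genuine analytic content is contained in Tao's original construction.
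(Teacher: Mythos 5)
Your overall strategy is the paper's own (the paper's proof is literally ``rescale the standard wave packet decomposition from \cite{TaoParab}''), and your lattice/width computations are correct: the pulled-back caps, positions, tube inequality, and the cancellation giving the $R^{-d/4}$ decay all check out. But the specific rescaling you chose creates a genuine gap. Rescaling both space and time puts you at the scale $\tilde R = 2^{-2k_1}R$, and the only standing assumption in force is $R \gtrsim 2^{\delta k_1(J-2)}$ with $\delta$ small; this bounds $2^{k_1}$ by a \emph{large} power of $R$, not a small one, so it does not give $\tilde R \gtrsim R^{c}$ for any $c>0$ — indeed it allows $\tilde R \ll 1$. In that regime ``apply Tao's decomposition at scale $\tilde R$'' is not available (the cited decomposition is a scale-$\geq 1$ statement), and your error bookkeeping fails in exactly the way that matters: $O(\tilde R^{-M}) = 2^{2k_1 M}R^{-M}$ cannot be converted into $O(R^{-M'})$ for arbitrary $M'$ under $R \gtrsim 2^{k_1(J-2)\delta}$; the inequality you invoke runs in the wrong direction. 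Since the induction on scales genuinely uses all $R$ down to $\sim 2^{\delta k_1(J-2)} \ll 2^{2k_1}$, this is not a vacuous regime, and the error term in \eqref{E:decomp1} is needed at strength $R^{-M}$ (with $M$ large depending on $\delta$) to be absorbed after multiplying by $\scriptE_2 f_2$ and integrating over the large box $Q_R$.

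The fix is to rescale \emph{time only}: set $s = 2^{-k_1(J-2)}t$, so that the phase becomes $2^{k_1(J-2)}h_1(\xi) = 2^{-2k_1}g_1(2^{k_1}\xi)$, which has Hessian $D^2g_1(2^{k_1}\xi) \sim I$ on its (small) domain $B(0,c_02^{-k_1})$, i.e.\ is elliptic with uniform parameters, while $Q_R$ becomes an honest box of side $\sim R$ in $(s,x)$. Tao's decomposition then applies directly at scale $R$ (the small frequency support is harmless), and undoing the time dilation sends tube directions $\nabla\bigl(2^{k_1(J-2)}h_1\bigr)(\xi_0)$ to $\nabla h_1(\xi_0) = v_1(T_1)$ while leaving widths $R^{1/2}$, the lattices $X_1,\Xi_1$, the decay $R^{-d/4}$, the $\ell^2$ and fixed-time $L^2_x$ bounds, and the $O(R^{-M})$ error untouched. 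This also removes the need for your covering-and-patching step (which, as you note, is only bookkeeping, and would otherwise be fine).
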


\begin{proof}
This may be obtained by rescaling the standard wave packet decomposition from \cite{TaoParab}.
\end{proof}

From here, the proof of Propositon~\ref{P:packet} is quick.  

\begin{proof}[Proof of Proposition~\ref{P:packet}]
 Given a $C$-separated subset $\Lambda\subseteq\{0,\ldots,2^{k_1(J-2)}\}$, let $(c_{T_2}^{\Lambda})$, $(\phi_{T_2}^{\Lambda})$ denote the coefficients and wave packets appearing in \eqref{E:decomp2}.  Then
$$
|\scriptE_2 f_2(t,x)| \leq \sum_{\Lambda} |\sum_{T_2} c_{T_2}^{\Lambda} \phi_{T_2}^{\Lambda}|+O(R^{-M}), \qquad (t,x) \in Q_R,
$$
where the sum is taken over a disjoint collection of $C$ such $\Lambda$'s.  Combining this with the wave packet decomposition in Lemma~\ref{L:packet1} and the fact that $|\scriptE_j f_j| \lesssim 1$ (because $\|f_j\|_{L^1_\xi} \lesssim \|f_j\|_{L^2_\xi} = 1$)
$$
|\scriptE_1 f_1 \scriptE_2 f_2(t,x)| \leq \sum_{\Lambda} |\sum_{T_1} c_{T_1} \phi_{T_1} \sum_{T_2}c_{T_2}^{\Lambda} \phi_{T_2}^{\Lambda}| + O(R^{-M}).
$$
The estimate \eqref{E:decomp} follows from H\"older, the triangle inequality, the pigeonhole principle, which lets us pick a single $\Lambda$, and $R \gtrsim 2^{\delta k_1}$.  The properties (\ref{E:ell2}-\ref{E:ortho tube}) have already been established, so we are done.  
\end{proof}

%%%%%%%%%%%%%%%%%%%%%%%%%%%%%%%%%%%%%%%%
%%%%%%%%%%%%%%%%%%%%%%%%%%%%%%%%%%%%%%%%
%%%%%%%%%%%%%%%%%%%%%%%%%%%%%%%%%%%%%%%%
\section{The local and global terms}
%%%%%%%%%%%%%%%%%%%%%%%%%%%%%%%%%%%%%%%%
%%%%%%%%%%%%%%%%%%%%%%%%%%%%%%%%%%%%%%%%
%%%%%%%%%%%%%%%%%%%%%%%%%%%%%%%%%%%%%%%%

The wave packet decomposition allows for a number of reductions.  These follow the general scheme of \cite{TaoParab}, but modifications are needed throughout to account for the degeneracy.

First, it suffices to show
\begin{equation} \label{E:big bad sum}
\|(\sum_{T_1} c_{T_1} \phi_{T_1})(\sum_{T_2} c_{T_2} \phi_{T_2})\|_{L^{\frac{d+3}{d+1}}_{t,x}(Q_R)} \lessapprox 2^{k_1\delta}(R^{(1-\eps)\alpha}+R^{C\eps})2^{\frac{k_1(J-2)(d-1)}{2(d+3)}},
\end{equation}
whenever the sums are taken over $S_j$-tubes $T_j$ with $\dist(T_j,Q_R) \lesssim R$, $\|(c_{T_j})\|_{\ell^2_{T_j}} \lesssim 1$, and the wave packets are as described in Proposition~\ref{P:packet}.  We only sum over $O(R^{\frac d2})$ $S_1$-tubes and $O(2^{k_1(J-2)}R^{\frac d2})$ $S_2$-tubes, so we may assume that for each $T_j$ in the sum, $|c_{T_j}| \gtrsim R^{-c_d}2^{-k_1(J-2)c_d}$.  This leaves $O(k_1 \log R)$ possible dyadic values for $c_{T_j}$ and by pigeonholing, it suffices to prove
\begin{equation} \label{E:reduced sum}
\begin{aligned}
&\|(\sum_{T_1 \in \mathcal T_1} \phi_{T_1})(\sum_{T_2 \in \mathcal T_2} \phi_{T_2})\|_{L^{\frac{d+3}{d+1}}_{t,x}(Q_R)} \\
&\qquad\qquad \lessapprox 2^{k_1\delta}(R^{(1-\eps)\alpha}+R^{C\eps})2^{\frac{k_1(J-2)(d-1)}{2(d+3)}}(\#\mathcal T_1\#\mathcal T_2)^{\frac12},
\end{aligned}
\end{equation}
whenever each $\mathcal T_j$ is a collection of $S_j$-tubes $T_j$ with $\dist(T_j,Q_R) \lesssim R$.  

We decompose $Q_R = \bigcup_{B \in \mathcal B} B$, where $\mathcal B$ is a collection of finitely overlapping translates of $R^{-\eps}Q_R$.  We also make a second, finer decomposition $Q_R = \bigcup_{q \in \scriptQ} q$, where $\scriptQ$ is a collection of finitely overlapping $R^{1/2}$ balls.  For $q \in \scriptQ$, define
$$
\mathcal T_j(q)= \{T_j \in \scriptT_j : T_j \cap R^\eps q \neq \emptyset\}.
$$
Given dyadic values $1 \leq \mu_1,\mu_2,\lambda_1,\lambda_2 \lesssim 2^{k_1(J-2)}R^{2(1+d)}$, define
\begin{align}\label{E:mu1mu2}
\scriptQ(\mu_1,\mu_2) &= \{q \in \scriptQ : \tfrac12 \mu_j \leq \#\scriptT_j(q) \leq \mu_j, \: j=1,2\},\\
\label{E:lambda1lambda2}
\scriptT_j(\lambda_j,\mu_1,\mu_2) &= \{T_j \in \scriptT_j : \tfrac12 \lambda_j \leq \#\{q \in \scriptQ(\mu_1,\mu_2) : T_j \in \scriptT_j(q)\} \leq \lambda_j\},\\
\label{E:B1B2}
B_j(T_j,\lambda_j,\mu_1,\mu_2) &= \mathop{\rm{arg\, max}}_{B \in \scriptB} \#\{q \in \scriptQ(\mu_1,\mu_2) : T_j \in \scriptT_j(q) \ctc{and} q \cap B \neq \emptyset\}.
\end{align}
If $B \in \scriptB$ and $T_j \in \scriptT_j$, say $T_j \sim_{\lambda_j,\mu_1,\mu_2} B$ if $B \subseteq C B_j(T_j,\lambda_j,\mu_1,\mu_2)$ and say $T_j \sim B$ if $T_j \sim_{\lambda_j,\mu_1,\mu_2} B$ for some $\lambda_j,\mu_1,\mu_2$.  (Here $C$ is sufficiently large for the proof of Lemmas~\ref{L:geom combinat1} and~\ref{L:geom combinat2} in Section~\ref{S:combinat}.)  Finally, given $B$, let $\mathcal T_j^\sim(B) = \{T_j \in \scriptT_j : T_j \sim B\}$, $\scriptT_j^{\not\sim}(B) = \scriptT_j \setminus \scriptT_j^\sim(B)$.  

By the triangle inequality,
\begin{align*}
\|(\sum_{T_1 \in\scriptT_1} \phi_{T_1})(\sum_{T_2 \in \scriptT_2} \phi_{T_2})\|_{L^{\frac{d+3}{d+1}}_{t,x}(Q_R)}
\leq
&\sum_{B \in \scriptB} \|(\sum_{\scriptT_1^\sim(B)} \phi_{T_1})(\sum_{\scriptT_2^\sim(B)} \phi_{T_2})\|_{L^{\frac{d+3}{d+1}}_{t,x}(B)}\\
&  +
\sum_{B \in \scriptB} \|(\sum_{\in \scriptT_1^{\not\sim}(B)} \phi_{T_1})(\sum_{\scriptT_2^\sim(B)} \phi_{T_2})\|_{L^{\frac{d+3}{d+1}}_{t,x}(B)}\\
&+
\sum_{B \in \scriptB} \|(\sum_{\scriptT_1^{\sim}(B)} \phi_{T_1})(\sum_{\scriptT_2^{\not\sim}(B)} \phi_{T_2})\|_{L^{\frac{d+3}{d+1}}_{t,x}(B)}\\
&+
\sum_{B \in \scriptB} \|(\sum_{\scriptT_1^{\not\sim}(B)} \phi_{T_1})(\sum_{\scriptT_2^{\not\sim}(B)} \phi_{T_2})\|_{L^{\frac{d+3}{d+1}}_{t,x}(B)}.
\end{align*}
As in \cite{TaoParab}, we will think of the first as the ``local term,'' and the last three as ``global.''  

The local term may be bounded easily using the induction hypothesis and the fact that there are only $O(\log R)$ possible dyadic values of $\lambda_1,\lambda_2,\mu_1,\mu_2$:
\begin{align*}
&\sum_{B \in \scriptB}\|(\sum_{\scriptT_1^\sim(B)} \phi_{T_1})(\sum_{\scriptT_2^\sim(B)} \phi_{T_2})\|_{L^{\frac{d+3}{d+1}}_{t,x}(B)}\\
&\qquad\qquad\lessapprox \sum_{B \in \scriptB} 2^{k_1\delta}R^{(1-\eps)\alpha}2^{\frac{k_1(J-2)(d-1)}{2(d+3)}}(\#\scriptT_1^\sim(B)\#\scriptT_2^\sim(B))^{\frac12} \\
&\qquad\qquad \leq 2^{k_1\delta}R^{(1-\eps)\alpha} 2^{\frac{k_1(J-2)(d-1)}{2(d+3)}} \bigl(\sum_{B \in \scriptB} \#\scriptT_1^\sim(B)\bigr)^{\frac12}\bigl(\sum_{B \in \scriptB}\#\scriptT_2^\sim(B))^{\frac12}\\
&\qquad\qquad= 2^{k_1\delta}R^{(1-\eps)\alpha}2^{\frac{k_1(J-2)(d-1)}{2(d+3)}}\bigl(\sum_{T_1 \in \scriptT_1} \sum_{B:B \sim T_1} 1\bigr)^{\frac12} \bigl(\sum_{T_2 \in \scriptT_2} \sum_{B:B \sim T_2}1\bigr)^{\frac12} \\
&\qquad\qquad\lessapprox 2^{k_1\delta}R^{(1-\eps)\alpha}2^{\frac{k_1(J-2)(d-1)}{2(d+3)}}(\#\scriptT_1\#\scriptT_2)^{\frac12}.
\end{align*}

It remains to control the global terms.  

%%%%%%%%%%%%%%%%%%%%%%%%%%%%%%%%%%%%%%%%%%%%%%%%%%%
%%%%%%%%%%%%%%%%%%%%%%%%%%%%%%%%%%%%%%%%%%%%%%%%%%%
%%%%%%%%%%%%%%%%%%%%%%%%%%%%%%%%%%%%%%%%%%%%%%%%%%%

\section{Reduction to two combinatorial estimates}

%%%%%%%%%%%%%%%%%%%%%%%%%%%%%%%%%%%%%%%%%%%%%%%%%%%
%%%%%%%%%%%%%%%%%%%%%%%%%%%%%%%%%%%%%%%%%%%%%%%%%%%
%%%%%%%%%%%%%%%%%%%%%%%%%%%%%%%%%%%%%%%%%%%%%%%%%%%

It suffices to show that for each $B \in \scriptB$,
\begin{equation} \label{E:global bound}
\|\sum_{T_1 \in \scriptT_1'(B)} \sum_{T_2 \in \scriptT_2'(B)} \phi_{T_1} \phi_{T_2}\|_{L^{\frac{d+3}{d+1}}_{t,x}(B)} 
\lessapprox  2^{k_1\delta}R^{C\eps}2^{\frac{k_1(J-2)(d-1)}{2(d+3)}}(\#\scriptT_1\#\scriptT_2)^{\frac12},
\end{equation}
in each of the cases $\scriptT_1'(B) = \scriptT_1^{\not\sim}(B)$ and $\scriptT_2'(B) \subseteq \scriptT_2$;  $\scriptT_1'(B) \subseteq \scriptT_1$ and $\scriptT_2'(B) = \scriptT_2^{\not\sim}(B)$.  The arguments for the different cases will only diverge in the proofs of the combinatorial estimates.  For convenience, we will use the notation $\scriptT_j'(\cdot)$ (with various arguments within the parentheses)  to refer to $\scriptT_j$ or $\scriptT_j^{\not\sim}$, depending on which case we are in.  

\begin{lemma}\label{L:L1 est}
\begin{equation} \label{E:L1 est}
\|\sum_{T_1 \in \scriptT_1'(B)}\sum_{T_2 \in \scriptT_2'(B)} \phi_{T_1}\phi_{T_2}\|_{L^1_{t,x}(B)} \lesssim 2^{\frac{k_1(J-2)}2}R (\#\scriptT_1'(B)\#\scriptT_2'(B))^{1/2}.
\end{equation}
\end{lemma}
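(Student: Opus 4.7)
The proof proceeds by Cauchy--Schwarz in $(t,x)\in B$ followed by $L^2_x$ orthogonality on each time slice, with the crucial observation that $S_1$-tubes and $S_2$-tubes remain in $B$ for very different amounts of time. By Cauchy--Schwarz,
\begin{equation*}
\|\sum_{T_1}\sum_{T_2}\phi_{T_1}\phi_{T_2}\|_{L^1(B)} \leq \|\sum_{T_1\in\scriptT_1'(B)}\phi_{T_1}\|_{L^2(B)}\|\sum_{T_2\in\scriptT_2'(B)}\phi_{T_2}\|_{L^2(B)};
\end{equation*}
writing $I_B$ for the time-projection of $B$ (of length $\sim 2^{k_1(J-2)}R^{1-\eps}$) and $B_x(t)$ for its time-$t$ slice (a spatial ball of radius $\sim R^{1-\eps}$), Fubini reduces each $L^2(B)$-norm squared to $\int_{I_B}\|\sum_{T_j}\phi_{T_j}(t,\cdot)\|_{L^2_x(B_x(t))}^2\,dt$.

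The key step is the slice-wise bound. Call a tube $T_j$ \emph{active at time $t$} if $\dist(x_j(T_j)-tv_j(T_j),B_x(t))\lesssim R^{1/2}$. Splitting the slice sum into active and inactive tubes, the inactive contribution is negligible by the pointwise decay \eqref{E:decay tube} (choosing the parameter $M$ in Proposition~\ref{P:packet} large enough), while applying the orthogonality \eqref{E:ortho tube} to the active portion with coefficients $c'_{T_j}=\1_{\{T_j\text{ active at }t\}}$ gives
\begin{equation*}
\|\sum_{T_j\in\scriptT_j'(B)}\phi_{T_j}(t,\cdot)\|_{L^2_x(B_x(t))}^2\lesssim \#\{T_j\in\scriptT_j'(B):T_j\text{ active at }t\}+O(R^{-M}).
\end{equation*}
Integrating in $t$ and swapping the order of summation,
\begin{equation*}
\|\sum_{T_j\in\scriptT_j'(B)}\phi_{T_j}\|_{L^2(B)}^2 \lesssim \sum_{T_j\in\scriptT_j'(B)}|\{t\in I_B:T_j\text{ active at }t\}|.
\end{equation*}

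The duration of activity now depends sharply on the tube direction. For $j=1$, $|v_1(T_1)|\lesssim 2^{k_1(1-J)}$, so the spatial center $x_1-tv_1$ moves by at most $|I_B|\cdot 2^{k_1(1-J)}\sim 2^{-k_1}R^{1-\eps}\ll R^{1-\eps}$ over $I_B$; hence any $T_1$-tube that is active at some time in $I_B$ is active throughout, with duration $\sim|I_B|\sim 2^{k_1(J-2)}R^{1-\eps}$. For $j=2$, $|v_2(T_2)|\sim 1$, so the center traverses the $R^{1-\eps}$-wide spatial slice in time $\lesssim R^{1-\eps}$. Therefore
\begin{equation*}
\|\sum_{T_1}\phi_{T_1}\|_{L^2(B)}^2\lesssim 2^{k_1(J-2)}R^{1-\eps}\,\#\scriptT_1'(B),\qquad \|\sum_{T_2}\phi_{T_2}\|_{L^2(B)}^2\lesssim R^{1-\eps}\,\#\scriptT_2'(B),
\end{equation*}
and multiplying the two Cauchy--Schwarz factors yields $2^{k_1(J-2)/2}R^{1-\eps}(\#\scriptT_1'(B)\#\scriptT_2'(B))^{1/2}$, which is slightly stronger than \eqref{E:L1 est}. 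The only minor subtlety is the passage from $L^2_x(\R^d)$-orthogonality in \eqref{E:ortho tube} to $L^2_x(B_x(t))$; the active/inactive splitting and large-$M$ wave packet decay handle this cleanly, so there is no serious obstacle.
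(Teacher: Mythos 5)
Your argument is essentially the paper's: Cauchy--Schwarz to separate the two factors, fixed-time $L^2_x$ orthogonality via \eqref{E:free wave} and \eqref{E:ortho tube}, the trivial time-length bound $2^{k_1(J-2)}R$ for the $S_1$-factor, and transversality ($|v_2(T_2)|\sim 1$) to show each $S_2$-tube meets $B$ only for time $O(R)$ --- which the paper implements by cutting $B$ into $R^{1-\eps}$-cubes $B_j$ and noting each $T_2$ lies near $O(1)$ of them, while you count activity time by Fubini. The one quantitative fix needed is your activity threshold: with cutoff $\lesssim R^{1/2}$ the inactive tubes do not contribute $O(R^{-M})$ (a tube just beyond the threshold gains only a constant from \eqref{E:decay tube}, and there are $\sim R^{d/2}$ directions), so take the threshold to be $R^{1/2+\eps}$ (or $R^{1-\eps}$, as in the paper's definition of $\scriptT_2'(B_j)$), after which the rest of your argument goes through unchanged and indeed gives the slightly stronger bound with $R^{1-\eps}$ in place of $R$.
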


\begin{proof}
We begin by estimating the contributions from the $S_j$-tubes separately in the cases $j=1,2$.  

By H\"older's inequality, \eqref{E:free wave}, and \eqref{E:ortho tube},
\begin{equation} \label{E:L1 est T1}
\|\sum_{T_1 \in \scriptT_1'(B)} \phi_{T_1}\|_{L^2_{t,x}(B)} \lesssim 2^{\frac{k_1(J-2)}2}R^{1/2}\|\sum_{T_1 \in \scriptT_1'(B)} \phi_{T_1}(0)\|_{L^2_x} \lesssim 2^{\frac{k_1(J-2)}2}R^{1/2}(\#\scriptT_1)^{1/2}.
\end{equation}

Write $B = \bigcup_{j=0}^{2^{k_1(J-2)}} B_j$, where each $B_j$ is an $R^{1-\eps}$ cube, and for each $j$, let $\scriptT_2'(B_j)$ denote the set of tubes $T_2 \in \scriptT_2'(B)$ for which $\dist(T_2,B_j) \lesssim R^{1-\eps}$.  Note that each tube is in $\scriptT_2(B_j)$ for $O(1)$ values of $j$ by transversality of $S_2$.  Using the decay estimate \eqref{E:decay tube}, H\"older and the fact that $R \gtrsim 2^{\delta c_d k_1}$, \eqref{E:free wave}, \eqref{E:ortho tube}, and the near disjointness of the sets $\scriptT_2'(B_j)$, 
\begin{align} \notag
&\|\sum_{T_2 \in \scriptT_2'(B)}\phi_{T_2}\|_{L^2_{t,x}(B)}^2 
= \sum_{j=0}^{2^{k_1(J-2)}}\|\sum_{T_2 \in \scriptT_2'(B)}\phi_{T_2}\|_{L^2_{t,x}(B_j)}^2 \\\notag
&\qquad = \sum_{j=0}^{2^{k_1(J-2)}}\|\sum_{T_2 \in \scriptT_2'(B_j)}\phi_{T_2} + O(R^{-M})\|_{L^2_{t,x}(B_j)}^2 \\\label{E:L1 est T2}
&\qquad \lesssim 1+R\sum_{j=0}^{2^{k_1(J-2)}}\|\sum_{T_2 \in \scriptT_2'(B_j)}\phi_{T_1}(0)\|_{L^2_x}^2 \lesssim R\sum_{j=0}^{2^{k_1(J-2)}}\#\scriptT_2'(B_j) \lesssim R\#\scriptT_2'(B).
\end{align}

Finally, \eqref{E:L1 est} just follows from \eqref{E:L1 est T1}, \eqref{E:L1 est T2}, and Cauchy--Schwarz.  
\end{proof}

By interpolation, \eqref{E:global bound} will then follow from the estimate
\begin{equation} \label{E:global L2}
\|\sum_{\scriptT_1'(B)} \sum_{\scriptT_2'(B)} \phi_{T_1}\phi_{T_2}\|_{L^2_{t,x}(B)} \lessapprox 2^{k_1\delta}R^{C\eps}R^{-\frac{d-1}4}(\#\scriptT_1\#\scriptT_2)^{1/2}.
\end{equation}

We decompose
\begin{equation} \label{E:decomposed}
\|\sum_{\scriptT_1'(B)}\sum_{\scriptT_2'(B)} \phi_{T_1}\phi_{T_2}\|_{L^2_{t,x}(B)}^2 \leq \sum_{q \subseteq 2 B}\| \sum_{\scriptT_1'(B)}\sum_{\scriptT_2'(B)} \phi_{T_1}\phi_{T_2}\|_{L^2_{t,x}(q)}^2.
\end{equation}
By the decay estimate, if $T_j \not\in \scriptT_j(q)$ (i.e.\ $T_j \cap R^\eps q = \emptyset$), $|\phi_{T_j}| \lesssim R^{-M}$ on $q$, for arbitrarily large $M$, so the contribution from any tubes not in $\scriptT_j(q)$ is negligible.  By this and pigeonholing, it suffices to prove that
\begin{equation} \label{E:refined}
\sum_{q \in \scriptQ(\mu_1,\mu_2)}\|\sum_{\scriptT_1'(q)}\sum_{\scriptT_2'(q)} \phi_{T_1}\phi_{T_2}\|_{L^2_{t,x}(q)}^2 \lessapprox R^{C\eps - \frac{d-1}2}\#\scriptT_1\#\scriptT_2,
\end{equation}
where
\begin{equation}\label{E:Tj'q}
\scriptT_j'(q) = \scriptT_j'(B) \cap \scriptT_j(q) \cap \scriptT_j(\lambda_j,\mu_1,\mu_2),
\end{equation}
and $1 \leq \mu_1,\mu_2,\lambda_1,\lambda_2 \lesssim R^{100d}$ are arbitrary dyadic values, which will remain fixed for the remainder of the section.

Given $\xi_1 \in B(0,2^{-k_1+1}c_0)$ and $\xi_2' \in B(0,2c_0)$, or $\xi_1' \in B(0,2^{-k_1+1}c_0)$ and $\xi_2 \in B(0,2c_0)$ (respectively), the functions
\begin{equation}\label{E:level set is pi}
\begin{aligned}
\xi_1' &\mapsto (h_1(\xi_1) + h_2(\xi_1'+\xi_2'-\xi_1)) - (h_1(\xi_1') + h_2(\xi_2'))\\
\xi_2' &\mapsto (h_1(\xi_1'+\xi_2'-\xi_2) + h_2(\xi_2)) - (h_1(\xi_1') + h_2(\xi_2'))
\end{aligned}
\end{equation}
have gradients comparable to 1, so the hypersurfaces
\begin{align*}
\pi_1(\xi_1,\xi_2') = \{\xi_1'\in B(0,2c_0) : h_1(\xi_1) + h_2(\xi_1'+\xi_2'-\xi_1) = h_1(\xi_1') + h_2(\xi_2')\},\\
\pi_2(\xi_2,\xi_1') = \{\xi_2'\in B(0,2c_0) : h_1(\xi_1'+\xi_2'-\xi_2) + h_2(\xi_2) = h_1(\xi_1') + h_2(\xi_2')\},
\end{align*}
are smoothly embedded.  

Given $\xi_1,\xi_1' \in \Xi_1$ and $\xi_2,\xi_2' \in \Xi_2$, define collections
\begin{equation} \label{E:pi tubes}
\begin{aligned}
\scriptT_1'(q,\xi_1,\xi_2') &= \{T_1' \in \scriptT_1'(q) : \dist(\xi(T_1'),\pi_1(\xi_1,\xi_2')) \lesssim R^{C\eps-1/2}\}\\
\scriptT_2'(q,\xi_2,\xi_1') &= \{T_2' \in \scriptT_2'(q) : \dist(\xi(T_2'),\pi_1(\xi_2,\xi_1')) \lesssim R^{C\eps-1/2}\},
\end{aligned}
\end{equation}
and quantities
\begin{equation} \label{E:nuj}
\begin{aligned}
\nu_1(q) = \sup_{\xi_1 \in \Xi_1,\xi_2' \in \Xi_2} \#\scriptT_1'(q,\xi_1,\xi_2')\\
\nu_2(q) = \sup_{\xi_2 \in \Xi_2,\xi_1' \in \Xi_1} \#\scriptT_2'(q,\xi_2,\xi_2').
\end{aligned}
\end{equation}

\begin{lemma}\label{L:combinatorial to L2}
For any $q \in \scriptQ(\mu_1,\mu_2)$, and $j=1,2$,
\begin{equation} \label{E:combinatorial to L2}
\|\sum_{\scriptT_1'(q)}\sum_{\scriptT_2'(q)} \phi_{T_1}\phi_{T_2}\|_{L^2_{t,x}(q)}^2 
\lessapprox R^{C\eps}R^{-(d-1)/2}\nu_j(q)\#\scriptT_1(q)\#\scriptT_2(q)
\end{equation}
\end{lemma}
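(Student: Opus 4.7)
The plan is to prove \eqref{E:combinatorial to L2} by a Plancherel/almost-orthogonality argument on the sum hypersurface $S_1+S_2$, coupled with a combinatorial count organized by the level-set hypersurfaces $\pi_j$ of \eqref{E:level set is pi}. I begin by localizing: choose a Schwartz bump $\eta_q$ adapted to $q$ with $\eta_q \gtrsim 1$ on $q$ and $\hat\eta_q$ essentially supported in a ball of radius $R^{-1/2}$, so that $\|F\|_{L^2(q)}^2 \lesssim \|\eta_q^{1/2} F\|_{L^2}^2$, where $F := \sum_{\scriptT_1'(q)}\sum_{\scriptT_2'(q)}\phi_{T_1}\phi_{T_2}$. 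By the free-wave property \eqref{E:free wave} and the Fourier localization \eqref{E:support tube}, each $\widehat{\phi_{T_j}}$ is concentrated on $S_j$ within $R^{-1/2}$ of $(h_j(\xi_j(T_j)),\xi_j(T_j))$, so after multiplying by $\eta_q$ the spacetime Fourier support of $\eta_q\phi_{T_1}\phi_{T_2}$ lies in an $R^{-1/2}$-ball about the sum point
\[
z(T_1,T_2) := \bigl(h_1(\xi_1(T_1))+h_2(\xi_2(T_2)),\,\xi_1(T_1)+\xi_2(T_2)\bigr) \in S_1+S_2.
\]

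Next I partition $\scriptT_1'(q)\times\scriptT_2'(q)$ into groups $\mathcal P(z)$ according to which cell of a suitable sub-grid of $R^{-1/2}\Z^{1+d}$ contains $z(T_1,T_2)$. Standard pigeonholing over $O(R^{C\eps})$ shifted sub-grids, chosen sufficiently separated that distinct cells produce pairwise-disjoint Fourier supports up to Schwartz tails, followed by Plancherel and Cauchy--Schwarz, yields
\[
\|\eta_q F\|_{L^2}^2 \lessapprox \sum_z \#\mathcal P(z) \sum_{(T_1,T_2)\in\mathcal P(z)} \|\eta_q\phi_{T_1}\phi_{T_2}\|_{L^2}^2.
\]
The pointwise bound $|\phi_{T_j}|\lesssim R^{-d/4}$ from \eqref{E:decay tube}, combined with $|q|\lesssim R^{(d+1)/2}$, gives $\|\eta_q\phi_{T_1}\phi_{T_2}\|_{L^2}^2 \lesssim R^{-(d-1)/2}$ per pair, so
\[
\|\eta_q F\|_{L^2}^2 \lessapprox R^{-(d-1)/2}\sum_z \#\mathcal P(z)^2.
\]

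It remains to show $\sum_z \#\mathcal P(z)^2 \lesssim \nu_j(q)\,\#\scriptT_1(q)\,\#\scriptT_2(q)$ for $j=1,2$. The left side counts ordered $4$-tuples $(T_1,T_2,T_1',T_2') \in \scriptT_1'(q)^2\times\scriptT_2'(q)^2$ satisfying $\xi_1(T_1)+\xi_2(T_2)=\xi_1(T_1')+\xi_2(T_2')$ and $h_1(\xi_1(T_1))+h_2(\xi_2(T_2))=h_1(\xi_1(T_1'))+h_2(\xi_2(T_2'))$, each coincidence understood up to $R^{C\eps-1/2}$. For $j=1$ I enumerate by first choosing $(T_1,T_2')$ freely, contributing a factor of $\#\scriptT_1(q)\#\scriptT_2(q)$; the spatial coincidence then forces $\xi_2(T_2) = \xi_1(T_1')+\xi_2(T_2')-\xi_1(T_1)$, so $T_2$ is determined by $T_1'$ up to $O(1)$; and the temporal coincidence becomes, after substitution, exactly the condition $\xi_1(T_1')\in\pi_1(\xi_1(T_1),\xi_2(T_2'))$ within $R^{C\eps-1/2}$, so $T_1'\in\scriptT_1'(q,\xi_1(T_1),\xi_2(T_2'))$, giving at most $\nu_1(q)$ possibilities by \eqref{E:nuj}. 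The case $j=2$ is symmetric, enumerating instead by $(T_1',T_2)$ and using $\pi_2$. Inserting the resulting combinatorial bound into the previous display yields \eqref{E:combinatorial to L2}.

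The main obstacle I expect is the almost-orthogonality in the second step: the Fourier supports of $\eta_q\phi_{T_1}\phi_{T_2}$ for different $z$-cells are only essentially disjoint after accounting for the Schwartz tails of $\hat\eta_q$, and organizing the pigeonhole/sub-grid construction so that the loss is only $R^{C\eps}$ requires some care. The transversality hypothesis $|\nabla h_1 - \nabla h_2|\gtrsim 1$ ensures that the map $(\xi_1,\xi_2)\mapsto z(\xi_1,\xi_2)$ is a controlled local diffeomorphism from $\Xi_1\times\Xi_2$ onto $S_1+S_2$, which is what makes both the orthogonal grouping and the smooth hypersurface structure of $\pi_j$ in the combinatorial count work.
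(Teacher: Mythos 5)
Your proposal is correct and follows essentially the same route as the paper: both arguments expand the square, use Plancherel/Fourier-support considerations on $S_1+S_2$ to see that only quadruples with coincident sums (up to $O(R^{C\eps-1/2})$) contribute, bound each surviving product's contribution by $R^{-(d-1)/2}$, and then count the quadruples exactly as you do---$(T_1,T_2')$ free, at most $\nu_1(q)$ choices of $T_1'$ via the level-set characterization of $\pi_1$, and $O(R^{C\eps})$ choices of $T_2$ (and symmetrically for $j=2$). The only cosmetic differences are your cell-grouping with $\eta_q$ in place of directly estimating the inner products $\langle\phi_{T_1}\phi_{T_2},\phi_{T_1'}\phi_{T_2'}\rangle$, and your pointwise-size-times-$|q|$ bound in place of the paper's transversality-based bound $\|\phi_{T_1}\phi_{T_2}\|_{L^2}\lesssim R^{-(d-1)/4}$, both of which yield the same numerology within the allowed $R^{C\eps}$ losses.
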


\begin{proof}
We give the proof when $j=1$.  
By simple arithmetic,
\begin{align*}
\|\sum_{\scriptT_1'(q)}\sum_{\scriptT_2'(q)} \phi_{T_1}\phi_{T_2}\|_{L^2_{t,x}(q)}^2 \leq \sum_{T_1,T_1' \in \scriptT_1'(q)}\sum_{T_2,T_2' \in \scriptT_2'(q)} \langle \phi_{T_1}\phi_{T_2},\phi_{T_1'}\phi_{T_2'}\rangle.
\end{align*}

By Plancherel, $\langle \phi_{T_1}\phi_{T_2},\phi_{T_1'}\phi_{T_2'}\rangle$ equals zero unless
\begin{equation}\label{E:defining pi}
\begin{gathered}
\xi(T_1) + \xi(T_2) = \xi(T_1') + \xi(T_2') + O(R^{-1/2})\\
h_1(\xi(T_1)) + h_2(\xi(T_2)) = h_1(\xi(T_1')) + h_2(\xi(T_2')) + O(R^{-1/2}),
\end{gathered}
\end{equation}
i.e.\ unless $\dist(T_1',\pi_1(\xi_1(T_1),\xi_2(T_2'))), \dist(T_2',\pi_2(\xi_2(T_2),\xi_1(T_1'))) \lesssim R^{-1/2}$.  

By Plancherel, a simple change of variables using transversality of the surfaces $S_1,S_2$, and the small frequency support of the $\phi_j(0)$,
$$
\|\phi_{T_1}\phi_{T_2}\|_{L^2_{t,x}} = \|\scriptF(\phi_{T_1})*\scriptF(\phi_{T_2})\|_{L^2_{\tau,\xi}} \lesssim R^{-(d-1)/4}\|\hat{\phi_{T_1}(0)}\|_{L^2_\xi} \|\hat{\phi_{T_2}(0)}\|_{L^2_\xi} \sim R^{-(d-1)/4}.
$$

We claim that, given $T_1,T_1',T_2$, \eqref{E:defining pi} can hold for at most $O(R^{C\eps})$ tubes $T_2$ in $\scriptT_2'(q)$.  Indeed, the second map in \eqref{E:level set is pi} has gradient comparable to 1, so the equations \eqref{E:defining pi} essentially determine $\xi_2(T_2')$; when combined with $q$, this direction determines $T_2'$.  

Putting these observations together, 
\begin{align*}
&\|\sum_{\scriptT_1'(q)}\sum_{\scriptT_2'(q)} \phi_{T_1}\phi_{T_2}\|_{L^2_{t,x}(q)}^2 
\lesssim \sum_{T_1 \in \scriptT_1'(q)}\sum_{T_2' \in \scriptT_2'(q)}\sum_{T_1 \in \scriptT_1'(q,\xi_1(T_1),\xi_2(T_2'))} R^{C\eps-(d-1)/2} \\
&\qquad \qquad \lessapprox R^{C\eps-(d-1)/2}\#\scriptT_1'(q)\#\scriptT_2'(q)\nu_1(q).
\end{align*}
The proof that 
$$
\|\sum_{\scriptT_1'(q)}\sum_{\scriptT_2'(q)} \phi_{T_1}\phi_{T_2}\|_{L^2_{t,x}(q)}^2 \lessapprox R^{C\eps-(d-1)/2}\#\scriptT_1'(q)\#\scriptT_2'(q)\nu_2(q)
$$
is exactly the same.
\end{proof}

It remains to control the sum on $q$ of the right side of \eqref{E:combinatorial to L2}.  We will show that if $\scriptT_1' = \scriptT_1^{\not\sim}$, then 
\begin{align} \label{E:combinatorial est nu1}
\sum_{\scriptQ(\mu,B)} \#\scriptT_1^{\not\sim}(q) \#\scriptT_2'(q) \nu_1(q) &\lessapprox R^{C\eps}\#\scriptT_1\#\scriptT_2
\end{align}
and that if $\scriptT_2' = \scriptT_2^{\not\sim}$, then
\begin{align}
\label{E:combinatorial est nu2}
\sum_{\scriptQ(\mu,B)} \#\scriptT_1'(q) \#\scriptT_2^{\not\sim}(q) \nu_1(q) &\lessapprox R^{C\eps}\#\scriptT_1\#\scriptT_2,
\end{align}
where $\scriptT_j'(q)$ is as in \eqref{E:Tj'q} and $\scriptQ(\mu,B) = \{q \in \scriptQ(\mu_1,\mu_2): q \subseteq 2B\}$.  These are our combinatorial estimates.

%%%%%%%%%%%%%%%%%%%%%%%%%%%%%%%%%%%%%%%%%%%%%%%
%%%%%%%%%%%%%%%%%%%%%%%%%%%%%%%%%%%%%%%%%%%%%%%
%%%%%%%%%%%%%%%%%%%%%%%%%%%%%%%%%%%%%%%%%%%%%%%
\section{Proofs of the combinatorial estimates} \label{S:combinat}
%%%%%%%%%%%%%%%%%%%%%%%%%%%%%%%%%%%%%%%%%%%%%%%
%%%%%%%%%%%%%%%%%%%%%%%%%%%%%%%%%%%%%%%%%%%%%%%
%%%%%%%%%%%%%%%%%%%%%%%%%%%%%%%%%%%%%%%%%%%%%%%

This section will be devoted to the proofs of the combinatorial estimates \eqref{E:combinatorial est nu1} and \eqref{E:combinatorial est nu2}.  There are some differences in the proofs due to the differing geometries of the intersections of $S_j$ tubes with $Q_R$ for $j=1,2$, but the two inequalities are more similar than not.  We will begin with \eqref{E:combinatorial est nu1} and indicate the changes necessary for \eqref{E:combinatorial est nu2}.  The argument is adapted from that of \cite{TaoParab}, so we will be somewhat brief.  

Recalling the role played by $\mu$ from \eqref{E:mu1mu2}, using Fubini, and then recalling the role of $\lambda$ from \eqref{E:lambda1lambda2} and \eqref{E:Tj'q},
\begin{align*}
&\sum_{q \in \scriptQ(\mu,B)} \#\scriptT_1^{\not\sim}(q)\#\scriptT_2'(q) \nu_1 
\lesssim \mu_2\nu_1 \sum_{q \in \scriptQ(\mu,B)} \#\scriptT_1^{\not\sim}(q) \\
&\qquad \qquad = \mu_2\nu_1 \sum_{T_1 \in \scriptT_1^{\not\sim}(B)} \#\{q \in \scriptQ(\mu,B) : T_1 \in \scriptT_1(B)\} \lesssim \mu_2\nu_1\lambda_1 \#\scriptT_1^{\not\sim}(B).
\end{align*}
Thus \eqref{E:combinatorial est nu1} will be proven if we can show that for an arbitrary (henceforth fixed) $q_0 \in \scriptQ(\mu,B)$ and arbitrary (also fixed) $\xi_1 \in \Xi_1, \xi_2' \in \Xi_2$,
\begin{equation} \label{E:q0 est}
\#\scriptT_1^{\not\sim}(q_0,\xi_1,\xi_2') \lessapprox 2^{k_1\delta} R^{C\eps} \frac{\#\scriptT_2}{\mu_2\lambda_1}.
\end{equation}

If $T_1 \in \scriptT_1^{\not\sim}(q_0,\xi_1,\xi_2')$, $B \not\subseteq C B_1(T_1,\lambda_1,\mu_1,\mu_2)$, so
$$
\#\{q \in \scriptQ(\mu) : T_1 \cap R^\eps q \neq \emptyset, \: \: q \cap \tfrac12 C B = \emptyset\} \gtrsim R^{-C\eps}\lambda_1.
$$
Furthermore, if $q \in \scriptQ(\mu)$, $\#\{T_2 \in \scriptT_2 : T_2 \cap R^\eps q \neq \emptyset\} \gtrsim \mu_2$, so
\begin{align*}
&\#\{(q,T_1,T_2) \in \scriptQ(\mu) \times \scriptT_1^{\not\sim}(q_0,\xi_1,\xi_2') \times \scriptT_2 : T_1 \cap R^\eps q \neq \emptyset, \:T_2 \cap R^\eps q \neq \emptyset, \: q \cap \tfrac12C B = \emptyset\}\\
&\qquad\qquad\qquad \gtrsim R^{-C\eps} \lambda_1 \mu_2 \#\scriptT_1^{\not\sim}(q_0,\xi_1,\xi_2').
\end{align*}
Thus it suffices to show that the left side of this inequality is bounded ($\lessapprox$) by $2^{k_1\delta} R^{C\eps}\#\scriptT_2$.  This will follow from the next lemma.

\begin{lemma}\label{L:geom combinat1}
If $T_2 \in \scriptT_2$,
$$
\#\{(q,T_1) \in \scriptQ \times \scriptT_1^{\not\sim}(q_0,\xi_1,\xi_2') : T_1 \cap R^\eps q \neq \emptyset, \: T_2 \cap R^\eps q \neq \emptyset, \: q \cap \tfrac{C}2 B = \emptyset\} \lessapprox 2^{k_1\delta} R^{C\eps}.
$$
\end{lemma}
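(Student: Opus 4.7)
The plan is to decouple the count along two axes: first, bound the number of contributing balls $q$ per admissible $T_1$ by transversality of the $S_1$- and $S_2$-tubes; second, bound the number of admissible $T_1$'s themselves using both the constraint that $T_1$ pass through the well-separated regions $q_0 \in B$ and $q \subseteq Q_R \setminus \tfrac{C}{2}B$, together with the slab condition $\dist(\xi(T_1),\pi_1(\xi_1,\xi_2')) \lesssim R^{C\eps-1/2}$.

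For the first reduction: since $|v_1(T_1)| \lesssim 2^{-(J-1)k_1} \ll 1$ while $|v_2(T_2)| \sim 1$, the tubes $T_1$ and $T_2$ are quantitatively transverse, so their $R^\eps$-fattenings meet in a spacetime parallelepiped of size $O(R^{1/2+\eps})$ transverse to the near-common line of intersection, hence meet only $O(R^{C\eps})$ balls of the $R^{1/2}$-cover $\scriptQ$. It thus suffices to show $\#\{\text{admissible }T_1\} \lessapprox R^{C\eps}$.

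For the second reduction I would parametrize admissible $T_1$ by the direction $v_1 = v_1(T_1) \in V_1$; the offset $x_1(T_1)$ is then determined up to $O(1)$ grid choices by the condition $T_1 \cap R^\eps q_0 \neq \emptyset$. The requirement that $T_1$ pass within $R^{1/2+\eps}$ of $T_2$ at \emph{some} time forces
\[
\bigl|P_{v_2^\perp}(x_{q_0}-x_2(T_2))+t_{q_0}\, P_{v_2^\perp}v_1\bigr| \lesssim R^{1/2+\eps};
\]
using $|t_{q_0}| \sim 2^{k_1(J-2)}R$, this pins down the projection $P_{v_2^\perp}v_1$ to a ball of radius $\lesssim R^{1/2+\eps}/|t_{q_0}|$, leaving a one-parameter line $\ell \subset V_1$ of eligible directions (parallel to $v_2(T_2) \approx e_1$, with $O(R^{C\eps})$ discrete transverse fuzz at the $v$-grid scale $2^{-k_1(J-2)}R^{-1/2}$). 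Pulling back via $\nabla h_1$ gives a smooth curve $\ell_\xi$ in $\Xi_1$ whose tangent direction is $\approx e_1$; on the other hand, the normal to $\pi_1(\xi_1,\xi_2')$ at the relevant point is $\nabla h_2(\xi_1'+\xi_2'-\xi_1) - \nabla h_1(\xi_1') \approx v_2(T_2) \approx e_1$ (from the defining relation of $\pi_1$), so $\ell_\xi$ is in fact nearly normal to $\pi_1$. The intersection of $\ell_\xi$ with the slab $\{\dist(\cdot,\pi_1(\xi_1,\xi_2')) \lesssim R^{C\eps-1/2}\}$ therefore contains $O(R^{C\eps})$ $\Xi_1$-lattice points, which yields the desired bound.

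The main obstacle is making the transversality estimate quantitative with constants uniform in $k_1, R, J$, and handling degenerate regimes where, for instance, $P_{v_2^\perp}(x_{q_0}-x_2(T_2))$ is nearly zero (so the determination of $v_1^\perp$ above becomes meaningless) or where $\ell_\xi$ happens to be accidentally tangent to $\pi_1$. This is precisely where the hypothesis $q \cap \tfrac C2 B = \emptyset$ together with $q_0 \in B$ enters: it guarantees either $|t_q - t_{q_0}| \gtrsim 2^{k_1(J-2)}R^{1-\eps}$ or the analogous spatial separation $|x_q-x_{q_0}| \gtrsim R^{1-\eps}$, providing the quantitative non-degeneracy needed. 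The constant $C$ in the definition of the $\sim$-relation must be chosen sufficiently large to absorb the accumulated $R^{O(\eps)}$ losses into the final $R^{C\eps}$ bound.
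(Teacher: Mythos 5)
Your first reduction is fine: for a fixed pair $(T_1,T_2)$ the two tubes make an angle $\sim 1$, so only $O(R^{C\eps})$ cubes $q$ can meet both $R^\eps$-dilates, and reducing to the bound $\#\{\text{admissible }T_1\}\lessapprox R^{C\eps}$ (which is in fact true) is legitimate. The gap is in the second step. The displayed inequality is not a consequence of ``$T_1$ passes within $R^{1/2+\eps}$ of $T_2$ at some time'': the near-approach occurs at an unknown time $t_q$, and the correct condition reads $|P_{v_2^\perp}(x_{q_0}-x_2(T_2))-(t_q-t_{q_0})P_{v_2^\perp}v_1|\lesssim R^{1/2+\eps}$, where $t_q-t_{q_0}$ takes either sign, has $|t_q-t_{q_0}|$ anywhere in $[c\,2^{k_1(J-2)}R^{1-\eps},\,2^{k_1(J-2)}R]$, and sweeps an interval of length $\sim R$ as the meeting point moves along $T_2$. (Your version in effect evaluates the separation of the two core lines at a fixed time.) Consequently $P_{v_2^\perp}v_1$ is not pinned to a single ball of radius $R^{1/2+\eps}/|t_{q_0}|$: its admissible values sweep a thickened segment of the ray through $P_{v_2^\perp}(x_{q_0}-x_2(T_2))$, whose length, measured in units of that radius, can be of size $\sim 2^{-k_1(J-1)}R^{1/2}$ --- a genuine power of $R$ once $R\gg 2^{Ck_1}$, which is allowed here. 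Worse, this leftover freedom points along $v_2^\perp$, i.e.\ essentially tangent to $\pi_1(\xi_1,\xi_2')$, so your final step (intersecting a pulled-back line with the slab, using that $e_1$ is nearly normal to $\pi_1$) cannot remove it: within your decoupled framework the count of admissible directions is not $\lessapprox R^{C\eps}$.

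The structural error is that projecting onto $v_2^\perp$ and declaring the $v_2$-component of $v_1$ free discards exactly the coupling that makes the count small. The genuinely free parameter is the location $(t_q,x_q)$ of the near-approach point along $T_2$, and it moves \emph{both} components of the required direction $v_1=\frac{x_q-x_0}{t_q-t_0}+O(2^{-k_1(J-2)}R^{-1/2+C\eps})$; the admissible set is a thickened curve parametrized by $t_q$, not ``small ball times a free line parallel to $e_1$.'' The saving geometry --- and the paper's actual argument --- is that this constraint must be used jointly with the slab condition: the near-approach point lies on $T_2$ and within $O(R^{1/2+\eps})$ of the hypersurface $\Gamma+(t_0,x_0)$ swept out by the directions $\nabla h_1(\pi_1(\xi_1,\xi_2'))$, and $T_2$ is transverse to $\Gamma$ (equivalently, the direction curve $t_q\mapsto\frac{x_q-x_0}{t_q-t_0}$ has tangent $\approx -v_2/(t_q-t_0)$, nearly normal to $\pi_1$ --- the very transversality you invoke, but attached to the wrong one-parameter family). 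This confines the meeting point to an $O(R^{1/2+C\eps})$-ball, and hence both the cubes $q$ and the directions of $T_1$ to $O(R^{C\eps})$ choices. With that repair your counting scheme closes, but as written the second reduction fails.
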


\begin{proof}
Let $(t_0,x_0)$ and $(t,x)$ denote the centers of $q_0$ and $q$, respectively.  Suppose that the pair $(q,T_1)$ is in the set above.  Since $T_1 \cap R^\eps q_0, T_1 \cap R^\eps q \neq \emptyset$,
$$
x-x_0 = (t-t_0)v_1(T_1) + O(R^{1/2+\eps}),
$$
which implies that $|x-x_0| \lesssim 2^{-k_1(J-2)}|t-t_0| + O(R^{1/2+\eps})$.  On the other hand, $q_0 \subseteq 2B$ and $q \cap \tfrac{C}2 B = \emptyset$ together imply that $|t-t_0| \gtrsim 2^{k_1(J-2)}R^{1-\eps}$ or $|x-x_0| \gtrsim R^{1-\eps}$; by the preceding observation, the former must hold.  This implies two things.   

First, $(t,x)$ must lie within $O(R^{1/2+\eps})$ of the hypersurface $\Gamma+(t_0,x_0)$, where
$$
\Gamma = \Gamma(\xi_1,\xi_2') = \{(t,x) : t \gtrsim 2^{k_1(J-2)}R^{1-\eps}, \: x = t \nabla h_1(\xi_1') \: \text{for some}\: \xi_1' \in \pi_1(\xi_1,\xi_2')\}.
$$
We will show that $\Gamma$ is transverse to directions in $V_2$.  Assuming this for a moment, our tube $T_2$ intersects $\Gamma$ in a ball of radius $R^{1/2}$ and thus picks out $O(R^{C\eps})$ cubes $q$.

Second, $v_1(T_1) = \tfrac{x-x_0}{t-t_0} + O(2^{-k_1(J-2)}R^{-1/2+C\eps})$, so given $q$, there are at most $O(R^{C\eps})$ possible choices for $T_1$. 

The proof of the lemma will be complete once we verify the transversality.  By ellipticity of $g_1$, $\nabla h_1$ is an invertible function.  Unwinding the definitions,
$$
\Gamma = \{(t,x) : h_2(\xi_1+(\nabla h_1)^{-1}(\tfrac xt)-\xi_2') - h_1((\nabla h_1)^{-1}(\tfrac xt)) = h_1(\xi_1)-h_2(\xi_2')\}.
$$
Thus (undoing the scalings), the normal at $(t,x)$ is parallel to the vector
\begin{align*}
&(-2^{-k_1(J-1)}\nabla g_2(\eta_2)(D^2 g_1(\eta_1'))^{-1}\nabla g_1(\eta_1')+2^{-2k_1(J-1)}\nabla g_1(\eta_1')(D^2 g_1(\eta_1'))^{-1}\nabla g_1(\eta_1'),\\
&\qquad\qquad \nabla g_2(\eta_2)(D^2g_2(\eta_1'))^{-1}-2^{-k_1(J-1)}\nabla g_1(\eta_1')(D^2g_1(\eta_1'))^{-1}),
\end{align*}
where $\frac xt = \nabla h_1(\xi_1') = 2^{-k_1(J-1)}\nabla g_1(\eta_1')$ and $\eta_2 = \xi_2 = \xi_1+\xi_1'-\xi_2'$ and $|\eta_1|,|\eta_2| < c_0$.  Recalling that $D^2 g_1$ is close to the identity and $\nabla g_2$ is close to $e_1$, we see that this normal makes a large angle with any $(1,-v_2(T_2))$, so we have the transversality we want.  
\end{proof}

This completes the proof of \eqref{E:combinatorial est nu1}.  Now we turn to \eqref{E:combinatorial est nu2}.  Simply changing subscripts in the earlier argument, we can reduce matters to proving the following.
\begin{lemma}\label{L:geom combinat2}
If $T_1 \in \scriptT_1$,
$$
\#\{(q,T_2) \in \scriptQ \times \scriptT_2^{\not\sim}(q_0,\xi_1,\xi_2') : T_2 \cap R^{\eps}q \neq \emptyset, \: T_1 \cap R^\eps q \neq \emptyset, \: q \cap \tfrac{C}2 B = \emptyset\} \lessapprox 2^{k_1\delta}R^{C\eps}.
$$
\end{lemma}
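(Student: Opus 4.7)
My plan is to adapt the proof of Lemma~\ref{L:geom combinat1} with the roles of $T_1$ and $T_2$ swapped, exploiting the fact that $|v_2(T_2)| \sim 1$ rather than $|v_1(T_1)| \lesssim 2^{-k_1(J-2)}$. Let $(t_0,x_0), (t,x)$ denote the centers of $q_0, q$. From $T_2 \cap R^\eps q_0, T_2 \cap R^\eps q \neq \emptyset$ one obtains $x-x_0 = (t-t_0)v_2(T_2) + O(R^{1/2+\eps})$, and since $|v_2(T_2)| \sim 1$, $|x-x_0| \sim |t-t_0|$ up to a negligible additive error. The constraints $q_0 \subseteq 2B$ and $q \cap \tfrac{C}{2}B = \emptyset$ force $|t-t_0| \gtrsim 2^{k_1(J-2)}R^{1-\eps}$ or $|x-x_0| \gtrsim R^{1-\eps}$; the first alternative together with $|x-x_0|\sim |t-t_0|$ would imply $|x-x_0| \gtrsim 2^{k_1(J-2)}R^{1-\eps}$, contradicting $|x|,|x_0| \lesssim R$ once $k_1$ is sufficiently large relative to $\eps$ (using $R \gtrsim 2^{\delta k_1(J-2)}$). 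Hence $R^{1-\eps} \lesssim |t-t_0|, |x-x_0| \lesssim R$.

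It follows that $(t,x)$ lies within $O(R^{1/2+\eps})$ of the hypersurface $\Gamma + (t_0,x_0)$, where
$$
\Gamma = \{(t,x) : R^{1-\eps} \lesssim t \lesssim R, \: x = t\,\nabla h_2(\xi_2') \text{ for some } \xi_2' \in \pi_2\},
$$
with $\pi_2$ the $(d-1)$-dimensional constraint surface attached to $\scriptT_2^{\not\sim}(q_0,\xi_1,\xi_2')$. Granted the transversality of $\Gamma$ to $(1,v_1(T_1))$, the fixed tube $T_1$ meets an $O(R^{1/2+\eps})$-neighborhood of $\Gamma+(t_0,x_0)$ in a set of diameter $O(R^{1/2+C\eps})$, which contains $O(R^{C\eps})$ cubes $q$; moreover, for each such $q$, the relation $v_2(T_2) = (x-x_0)/(t-t_0) + O(R^{-1/2+C\eps})$ (using $|t-t_0| \gtrsim R^{1-\eps}$), together with the $R^{-1/2}$-spacing of $\Xi_2$, leaves only $O(R^{C\eps})$ choices for $\xi(T_2)$, and passage through $R^\eps q$ then determines $T_2$ up to $O(R^{C\eps})$ tubes. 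Multiplying these counts yields the claimed bound.

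The main obstacle is verifying the transversality of $\Gamma$ with $(1,v_1(T_1))$, which is the geometric input specific to this lemma. Writing $\Gamma$ as the zero set of $F(t,x) := h_1(\xi_1'+\xi_2'-\xi_2) + h_2(\xi_2) - h_1(\xi_1') - h_2(\xi_2')$ with $\xi_2' := (\nabla h_2)^{-1}(x/t)$, a direct computation gives
$$
\nabla F\cdot(1,v_1(T_1)) = \tfrac{1}{t}\bigl\langle (D^2 h_2(\xi_2'))^{-1}[\nabla h_1(\xi_1'+\xi_2'-\xi_2) - \nabla h_2(\xi_2')],\, v_1(T_1) - \nabla h_2(\xi_2')\bigr\rangle.
$$
Using $|\nabla h_1|, |v_1(T_1)| \lesssim 2^{-(J-2)k_1} \ll 1$, $\nabla h_2(\xi_2') \approx e_1$, and $D^2 h_2 \approx D^2 g_2(0)$ positive-definite with eigenvalues $\sim 1$, the inner product reduces to $e_1^T(D^2 g_2(0))^{-1}e_1 \sim 1$ plus small errors, so $\nabla F \cdot (1,v_1(T_1))\sim 1/t$; since $|\nabla F|\sim 1/t$ as well, the angle between $(1,v_1(T_1))$ and the tangent plane of $\Gamma$ is bounded below by an absolute constant. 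This is the essential difference with Lemma~\ref{L:geom combinat1}, where the transversality instead ran against $V_2$-directions.
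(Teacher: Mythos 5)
Your proposal is correct and follows essentially the same route as the paper's own proof, which likewise reduces to $|t-t_0|,|x-x_0|$ being comparable and $\gtrsim R^{1-\eps}$ and then runs the transversality/counting argument of Lemma~\ref{L:geom combinat1} with the roles of the tubes exchanged; your explicit computation of $\nabla F\cdot(1,v_1(T_1))$ simply fills in the step the paper dismisses as ``similar (but slightly simpler).'' One small remark: ruling out the alternative $|t-t_0|\gtrsim 2^{k_1(J-2)}R^{1-\eps}$ by contradiction is not justified when $2^{k_1(J-2)}\leq R^{\eps}$ (the hypothesis $R\gtrsim 2^{\delta k_1(J-2)}$ bounds $R$ from below, not above), but this is harmless: in that case the alternative still gives $|t-t_0|,|x-x_0|\gtrsim R^{1-\eps}$, which is all your argument uses.
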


\begin{proof}
As before, let $(t_0,x_0), (t,x)$ denote the centers of $q_0,q$.  This time, if $(q,T_2)$ is in the above set, $T_2 \cap R^\eps q_0,T_2 \cap R^\eps q \neq \emptyset$, which implies that $|t-t_0| \lesssim R$.  Thus since $q_0 \subseteq 2B$ and $q \cap \tfrac C2 B = \emptyset$, $|x-x_0| \gtrsim R^{1-\eps}$.  Now $x-x_0 = (t-t_0)v_2(T_2) + O(R^{1/2+\eps})$, and since $|v_2(T_2)| \lesssim 1$, $|t-t_0| \gtrsim R^{1-\eps}$ as well.  

Now we know that $(t,x)$ must lie within $O(R^{1/2+\eps})$ of the hypersurface 
$$
\{(t,x) : |t-t_0| \gtrsim R^{1-\eps}, \: (x-x_0) = (t-t_0)\nabla h_2(\xi_2'), \: \text{for some}\: \xi_2' \in \pi_2(\xi_2,\xi_1')\}.
$$
It is similar (but slightly simpler) to show that this hypersurface is transverse to directions in $V_1$ (such directions are nearly vertical), so $T_1$ intersects it in a ball of radius $R^{1/2}$, picking out $O(R^{C\eps})$ cubes $q$.

Between the estimate $v_2(T_2) = \tfrac{x-x_0}{t-t_0} + O(R^{-1/2+C\eps})$ and the fact that $T_2$ intersects $R^\eps q$, there are only $O(R^{C\eps})$ possibilities for $T_2$ as well, so we are done.  
\end{proof}

%%%%%%%%%%%%%%%%%%%%%%%%%%%%%%%%%%%%%%%%%%%%%%%
%%%%%%%%%%%%%%%%%%%%%%%%%%%%%%%%%%%%%%%%%%%%%%%
%%%%%%%%%%%%%%%%%%%%%%%%%%%%%%%%%%%%%%%%%%%%%%%
\section{Extensions and remarks}

The same argument gives bounds for restriction to the graph of $a_1|\xi|^{k_1} + \cdots + a_n |\xi|^{k_n}$, for any coefficients $a_1,\ldots,a_n > 0$ and real powers $2 \leq k_1 < \cdots < k_n$; the coefficients however will depend on the $k_i$, not just on $k_n$.  

Let $P(t) = a_2 t^2 + \cdots + a_n t^n$, and assume that $P''(t) > 0$ for all $t > 0$.  Let $\nmin$ and $\nmax$ be the degrees of the lowest and highest (respectively) terms of $P$; their coefficients, $a_{\nmin}$ and $a_{\nmax}$, must be positive.  Let $\Imin = \{t \geq 0 : a_{\nmin}t^{\nmin} \geq \max_i |a_i t^i|\}$, $\Imax = \{t \geq 0 : a_{\nmax} t^{\nmax} \geq \max_i |a_i t_i|\}$, and $\Imed = [0,\infty) \setminus (\Imin\cup\Imax)$.  Then $\Imin$ contains all points sufficiently small and $\Imax$ all points sufficiently large, so $\{(P(|\xi|),\xi) : |\xi| \in \Imed\}$ is compact and elliptic.  The methods of the preceding sections apply on $\{(P(|\xi|),\xi) : |\xi| \in I_\bullet\}$ for $\bullet = \rm{min},\rm{max}$, and we can obtain a non-uniform version of Theorem~\ref{T:main} for restriction to the graph of $P(|\xi|)$.  Arguing similarly (but only separating out the low frequencies), we may prove such a nonuniform theorem for hypersurfaces of the form
$$
\{(\phi(|\xi|),\xi) : |\xi| \leq R\},
$$
whenever $\phi$ is smooth, $\phi'(0) = 0$, $\phi$ is finite type at 0, and $\phi''(t) > 0$ for $t > 0$.  It would be nice to know more uniform versions of these results.  

As a corollary of Theorem~\ref{T:main}, we can obtain an unweighted result, which is necessarily nonuniform.

\begin{corollary} \label{C:unweighted}
Let $P$ be a polynomial on $\R$ with $P'(0) = 0$ and $P''(t) > 0$ for all $t > 0$.  Let $\nmin$ denote the lowest nonzero power of $t$ appearing in $P$ and $\nmax$ the greatest.  Then, conditional on the restriction conjecture $\scriptR(p_0 \to q_0)$ for the admissible pair $(p_0,q_0)$, 
\begin{equation} \label{E:unweighted}
\|\hat f(P(|\xi|),\xi)\|_{L^r(d\xi)} \lesssim \|f\|_p, 
\end{equation}
provided $1 \leq p < p_0$ and either $r \geq p$ and $\tfrac{dp'}{\nmax + d} \leq r \leq \tfrac{dp'}{\nmin+d}$, or $r < p$ and $\tfrac{dp'}{\nmax+d} < r < \tfrac{dp'}{\nmin+d}$.  The implicit constant depends on $d,p,P$.  
\end{corollary}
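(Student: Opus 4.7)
The plan is to derive the unweighted estimate from the weighted restriction
\begin{equation*}
\left(\int |\hat f(P(|\xi|),\xi)|^{q_1} \Lambda_P\, d\xi\right)^{1/q_1} \lesssim \|f\|_{p_1}, \qquad q_1 = \tfrac{dp_1'}{d+2},
\end{equation*}
valid for every restriction-admissible $(p_1,q_1)$ with $p_1 \in [1,p_0)$, which follows from the non-uniform version of Theorem~\ref{T:main} for polynomials satisfying $P'(0)=0$ and $P''(t)>0$ for $t>0$ discussed earlier in this section. Setting $d\mu := \Lambda_P\, d\xi$, one has
\begin{equation*}
\|\hat f(P(|\xi|),\xi)\|_{L^r(d\xi)} = \|\hat f(P(|\xi|),\xi)\, \Lambda_P^{-1/r}\|_{L^r(d\mu)},
\end{equation*}
so the task reduces to bounding the right-hand side by a Hölder-type inequality in $L(d\mu)$.

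First I would measure the size of $\Lambda_P^{-1/r}$. The asymptotics $\Lambda_P(\xi) \sim |\xi|^{d(\nmin-2)/(d+2)}$ as $\xi \to 0$ and $\Lambda_P(\xi) \sim |\xi|^{d(\nmax-2)/(d+2)}$ as $|\xi|\to\infty$ yield, after an elementary computation of the distribution function relative to $d\mu$, that $\Lambda_P^{-1/r} \in L^{t,\infty}(d\mu)$ exactly when $t \in \bigl[\tfrac{r(d+\nmax)}{\nmax-2},\tfrac{r(d+\nmin)}{\nmin-2}\bigr]$, and belongs to strong $L^t(d\mu)$ only for $t$ in the open interior of that interval. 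Under the Hölder relation $1/r = 1/q + 1/t$ with $q=dp'/(d+2)$, these ranges translate precisely into $r \in \bigl[\tfrac{dp'}{d+\nmax},\tfrac{dp'}{d+\nmin}\bigr]$, either closed or open.

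For $r$ strictly in the interior of this interval I would apply plain Hölder in $L(d\mu)$,
\begin{equation*}
\|\hat f\|_{L^r(d\xi)} \leq \|\hat f\|_{L^q(d\mu)}\, \|\Lambda_P^{-1/r}\|_{L^t(d\mu)} \lesssim \|f\|_p,
\end{equation*}
with $1/r = 1/q + 1/t$, using the weighted restriction for the first factor and finiteness of the strong $L^t$ norm for the second. This handles all cases with $r<p$ and the open interior when $r\geq p$.

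For $r \geq p$ at the two endpoints, where only the weak norm of $\Lambda_P^{-1/r}$ survives, I would upgrade to Lorentz spaces. Real (Marcinkiewicz) interpolation of the restriction operator between two admissible pairs bracketing $(p,q)$ — both available from $\scriptR(p_0\to q_0)$ and the trivial $(1,\infty)$ endpoint — yields the Lorentz refinement $\|\hat f\|_{L^{q,r}(d\mu)} \lesssim \|f\|_{L^{p,r}}$. The hypothesis $r \geq p$ furnishes the inclusion $L^p \subseteq L^{p,r}$, so the right side is controlled by $\|f\|_p$. Finally, O'Neil's Hölder inequality for Lorentz spaces,
\begin{equation*}
\|\hat f\, \Lambda_P^{-1/r}\|_{L^r(d\mu)} \lesssim \|\hat f\|_{L^{q,r}(d\mu)}\, \|\Lambda_P^{-1/r}\|_{L^{t,\infty}(d\mu)},
\end{equation*}
with $1/r = 1/q + 1/t$ and the identity $L^{r,r}(d\mu) = L^r(d\mu)$, closes the argument at the endpoints. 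The main obstacle is bookkeeping: the Lorentz spaces produced by real interpolation live with respect to the weighted measure $d\mu$ rather than Lebesgue, and the same convention must be used for the weak-norm bound on $\Lambda_P^{-1/r}$, so that Marcinkiewicz interpolation and O'Neil's inequality compose cleanly.
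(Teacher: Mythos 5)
Your argument is correct, and it reaches the corollary by a route organized differently from the paper's. The paper splits frequency space into the three regions $\Amin,\Amed,\Amax$, applies the Lorentz--H\"older inequality with explicit power weights against Lebesgue measure on each region, lowers the second Lorentz index of the target from $q$ to $p$ by Marcinkiewicz interpolation along segments of constant weight (this is \eqref{E:cor 2}, at fixed $p$), concludes for $r\ge p$ from $L^{r,p}\subseteq L^{r}$, and for $r<p$ decomposes further into dyadic annuli and sums a geometric series, the strict inequalities guaranteeing convergence. You instead work globally with the affine measure $d\mu=\Lambda_P\,d\xi$: all of the region and dyadic bookkeeping is compressed into the determination of the exponents $t$ for which $\Lambda_P^{-1/r}\in L^{t}(d\mu)$ or $L^{t,\infty}(d\mu)$, and your computation of that window, together with its translation into the stated $r$-range under $1/t=1/r-1/q$, is accurate; plain H\"older then covers the whole open range (in particular every case with $r<p$) in one stroke, while at the endpoints you interpolate the restriction operator itself between two admissible pairs bracketing $(p,q)$ to get $\|\hat f\|_{L^{q,r}(d\mu)}\lesssim\|f\|_{L^{p,r}}$, use $r\ge p$ via $L^{p}\subseteq L^{p,r}$, and close with O'Neil's inequality. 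Both routes rely on the same toolbox (the weighted estimate \eqref{E:cor 1} from Theorem~\ref{T:main} or its non-uniform extension, Lorentz H\"older, real interpolation), but they deploy the interpolation differently: the paper interpolates within a family of power-weighted estimates at fixed $p$, whereas you interpolate across two admissible pairs, trading the explicit weighted bounds \eqref{E:cor 2} (which have some independent interest) for a shorter, decomposition-free argument. Two details worth making explicit: the bracketing pairs lie on the scaling line $q=dp'/(d+2)$, which is affine in $(1/p,1/q)$, so real interpolation does return the target exponent $q$ exactly; and the degenerate cases ($p=1$, $\nmin=2$, or the monomial case $\nmin=\nmax$) require reading your $t$-interval with an infinite endpoint or as a single point, where the argument only simplifies---the paper's proof carries the same implicit caveats.
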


For a given value of $p$ the range of $r$ in the corollary is sharp.  In particular, the full conjectured range of unweighted bounds would follow from a resolution of the restriction conjecture.  We note that in certain cases, some of the exponents $r$ covered in the corollary may be less than 1.  

The proof of the corollary uses an argument dating back at least to Drury--Marshall in \cite{DM85} and some simple observations.  

\begin{proof}
We give the proof when $\nmin < \nmax$.  In the monomial case $\nmin=\nmax$, the argument is similar but simpler.  

By Theorem~\ref{T:main} (or the extension mentioned above),
\begin{equation} \label{E:cor 1}
\|\hat f(P(|\xi|),\xi) \Lambda_P(\xi)^{\frac{d}{(d+2)p'}}\|_{L^q} \lesssim \|f\|_p, \qquad 1\leq p < p_0, \quad q = \tfrac{dp'}{d+2}.
\end{equation}
Let $\Imin,\Imed,\Imax$ be the intervals defined just before the statement of the corollary, and let $A_\bullet = I_\bullet$ for $\bullet = \rm{min}, \rm{med}, \rm{max}$.  

Since $|\Amed|<\infty$ and $\Lambda_P(\xi) \sim 1$ on $\Amed$, 
$$
\|\hat f(P(|\xi|),\xi)\|_{L^r(\Amed)} \lesssim \|f\|_p, \qquad 0 < r \leq \tfrac{dp'}{d+2}.
$$
This includes the range in the corollary, so it suffices to control the low and high frequency parts.  

For $\xi \in A_\bullet$, $\Lambda_P(\xi) \sim |\xi|^{\frac{(n_\bullet-2)d}{d+2}}$.  Thus by \eqref{E:cor 1} and the Lorentz space version of H\"older's inequality (\cite{SteinWeiss}),
$$
\|\hat f(P(|\xi|),\xi) |\xi|^{\frac{n_\bullet+d}{p'} - \frac dr}\|_{L^{r,q}(A_\bullet)} \lesssim \|f\|_p, \qquad 0 < r \leq q = \tfrac{dp'}{d+2}, \quad 1 \leq p < p_0.
$$
Performing Marcinkiewicz interpolation along segments with $\frac{n_\bullet+d}{p'}-\frac dr$ equal to a constant,
\begin{equation} \label{E:cor 2}
\|\hat f(P(|\xi|),\xi) |\xi|^{\frac{n_\bullet+d}{p'} - \frac dr}\|_{L^{r,p}(A_\bullet)} \lesssim \|f\|_p, \qquad 0 < r < \tfrac{dp'}{d+2}, \: 1 < p < p_0.
\end{equation}

Now we turn to the low frequency part.  By \eqref{E:cor 2}, 
$$
\|\hat f(P(|\xi|),\xi) \|_{L^{r,p}(\Amin)} \lesssim \|f\|_p, \qquad r = \tfrac{dp'}{\nmin+d}.
$$
When $r \geq p$, the left side bounds the $L^r(\Amin)$ norm, which in turn bounds the $L^s(\Amin)$ norm for all $s \leq r$, since $|\Amin| < \infty$.  If $r < p$, we set $A_k = \{|\xi| \sim 2^k\}$ and let $q = \tfrac{dp'}{d+2}$.  Then $q > r$, so by H\"older's inequality and \eqref{E:cor 1},
\begin{equation} \label{E:cor 3}
\begin{aligned}
\|\hat f(P(|\xi|),\xi)\|_{L^r(A_k)} &\lesssim 2^{kd(\frac1r-\frac1q)}2^{-k\frac{\nmin-2}{p'}}\|\hat f(P(|\xi|),\xi)\|_{L^q(A_k;\Lambda_P)}\\
& \lesssim 2^{k(\frac dr - \frac{\nmin+d}{p'})}\|f\|_p.
\end{aligned}
\end{equation}
For $r < \tfrac{dp'}{\nmin+d}$, we can sum over those $k$ such that $A_k \cap \Amin \neq \emptyset$, obtaining
$$
\|\hat f(P(|\xi|),\xi)\|_{L^r(\Amin)} \lesssim \|f\|_p.
$$

Now we turn to the high frequency terms.  Since $|\xi| \gtrsim 1$ on $\Amax$, \eqref{E:cor 2} implies that
$$
\|\hat f(P(|\xi|),\xi)\|_{L^{r,p}(\Amax)} \lesssim \|f\|_p, \qquad \tfrac{dp'}{\nmax+d} \leq r \leq \tfrac{dp'}{d+2}, \: 1 \leq p < p_0.
$$
If $r \geq p$, the left side of this inequality bounds the $L^r$ norm and we are done.  If $r < p$, we argue exactly as in \eqref{E:cor 3} to obtain
$$
\|\hat f(P(|\xi|),\xi)\|_{L^r(A_k)} \lesssim 2^{k(\frac dr - \frac{\nmax+d}{p'})}\|f\|_p,
$$
which is summable over large $k$ for $r > \tfrac{dp'}{\nmax+d}$.  
\end{proof}

The sharpness of the corollary in the case $r \geq p$ is known, and for $r < p$, it has a similar proof to the analogous result in \cite{BAJM}.  

We close with the essentially trivial deduction of uniform local estimates from elliptic restriction theorems off the scaling line.    Our motivations are two-fold.  First, this allows us to obtain bounds in the Bourgain--Guth range (\cite{BourgainGuth}).  Second, in the negatively curved case, no scaling-critical estimates are known beyond Stein--Tomas (\cite{VargasNeg, LeeNeg}), so these arguments may be helpful in a consideration of more general hypersurfaces.  

\begin{proposition}\label{P:off scaling}
Assume that $\scriptR^*(p \to q)$ holds for some $q$ greater than the maximum of $\tfrac{(d+2)p'}d$ and $\tfrac{2(d+1)}d$.  Then for all bounded sets $K \subseteq \R^d$ and even polynomials $P$ with non-negative coefficients,
$$
\|\Lambda_P(\nabla)^{1/\tilde p'}\scriptE_P(\chi_K f)\|_{L^{q}} \lesssim \||\xi|^\alpha f\|_{L^{p}}, \qquad \tilde p' := \tfrac{dq}{d+2}, \qquad  \alpha < \tfrac d{\tilde p}-\tfrac d{p}.
$$
The implicit constant depends on $K$, $\alpha$, and the degree of $P$.  
\end{proposition}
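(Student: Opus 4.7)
The plan is to mimic the dyadic decomposition from the proof of Theorem~\ref{T:main}, using the off-scaling-line elliptic hypothesis $\scriptR^*(p\to q)$ in place of the scaling-critical hypothesis and Corollary~\ref{C:bilin Str gen}. Being off the scaling line, the dyadic pieces will now gain a true geometric factor between scales, which obviates the need for any square-function or bilinear machinery --- summation by the triangle inequality will suffice.

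I would first follow the initial decomposition from Section~\ref{S:linear}: write $P(t) = \sum_j a_j t^{2j}$ and partition $(0,\infty)$ into at most $\deg(P)$ consecutive intervals $J_j$ on which $P$ is comparable to the monomial $a_j t^{2j}$. After reducing to a single $J_j$ via the triangle inequality and normalizing $a_j = 1$ by an affine change of variables, we further decompose $\{|\xi|\in J_j\}$ into dyadic annuli $A_k = \{|\xi|\sim 2^{-k}\}$. Because $K$ is bounded, only $k \geq -C_K$ contribute to $\chi_K f$.

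Next, on each $A_k$ I would rescale as in the proof of Lemma~\ref{L:dyadic freq}: setting $\eta = 2^k\xi$ and $(\tilde t,\tilde x) = (2^{-2jk}t, 2^{-k}x)$, the rescaled phase $g_k(\eta) = 2^{2jk}P(2^{-k}|\eta|)$ is elliptic on $A_0$ with parameters depending only on $\deg(P)$. Applying the hypothesis $\scriptR^*(p\to q)$ to $\scriptE_{g_k}$ and undoing the rescaling, and then using $\Lambda_P(\xi)\sim 2^{-k(2j-2)d/(d+2)}$ on $A_k$ together with $\tilde p' = dq/(d+2)$, a direct bookkeeping computation yields
\begin{equation*}
\|\Lambda_P(\nabla)^{1/\tilde p'}\scriptE_P\chi_{A_k}f\|_{L^q}
\lesssim 2^{k(d/p - d/\tilde p)}\|\chi_{A_k}f\|_{L^p}.
\end{equation*}
The exponent $d/p - d/\tilde p$ is strictly negative: the assumption $q > (d+2)p'/d$ forces $\tilde p' > p'$, equivalently $\tilde p < p$.

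Finally, I would sum over $k \geq -C_K$ using the triangle inequality and then H\"older's inequality to obtain
\begin{equation*}
\sum_{k\geq -C_K} 2^{k(d/p - d/\tilde p)}\|\chi_{A_k}f\|_{L^p}
\leq \Bigl(\sum_k 2^{k(d/p - d/\tilde p+\alpha)p'}\Bigr)^{1/p'}
\Bigl(\sum_k 2^{-k\alpha p}\|\chi_{A_k}f\|_{L^p}^p\Bigr)^{1/p}.
\end{equation*}
The first factor converges (thanks to the lower cutoff on $k$) precisely when $\alpha < d/\tilde p - d/p$, while the second factor is comparable to $\||\xi|^\alpha f\|_{L^p}$. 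There is no real obstacle in this argument --- its content is a scaling computation coupled with the observation that the off-scaling gap produces summable decay, making the almost-orthogonality step from Section~\ref{S:linear} unnecessary.
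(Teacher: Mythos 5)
Your proposal is correct and follows essentially the same route as the paper: decompose into the monomial-like intervals $J_j$ and dyadic annuli, rescale each annulus to apply the elliptic hypothesis $\scriptR^*(p\to q)$, and exploit the off-scaling gain $2^{k(d/p-d/\tilde p)}$ (your bookkeeping exponent matches the paper's $-kd+\tfrac{(d+2)k}{q}+\tfrac{dk}{p}$), with the lower cutoff on $k$ from the boundedness of $K$ making the geometric factor summable. The only cosmetic difference is the summation step: you use H\"older in $k$, whereas the paper simply absorbs $|\xi|^\alpha$ into each dyadic piece and bounds $\||\xi|^\alpha\chi_{A_{jk}}f\|_p\leq\||\xi|^\alpha f\|_p$ before summing the geometric series.
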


\begin{proof}
We may assume that $K = B(0,R)$ for some $R>0$.  Choose intervals $J_j$ as in Section~\ref{S:linear}:  so that $P(t) \sim a_j t^{2j}$ on $J_j$.  It suffices to prove uniform estimates over each annulus $A_j := \{\xi \in K : |\xi| \in J_j\}$.  For $k \in \Z$, $2^{-k} \leq 2R$, let 
$$
A_{jk} = \{\xi \in A_j : 2^{-k-1} \leq |\xi| \leq 2^{-k}\}.
$$
Rescaling $\scriptR^*(p \to q)$,
\begin{align*}
\|\Lambda_P(\nabla)^{1/\tilde p'} \scriptE_P(\chi_{A_{jk}}f)\|_{q} &\lesssim 2^{-kd+\tfrac{(d+2)k}{q}+\tfrac{dk}{p}}\|f\chi_{A_{jk}}\|_p\\
&\sim 2^{k(\alpha - (\frac d{\tilde p}-\frac d{p}))}\||\xi|^\alpha \chi_{A_{jk}} f\|_{L^{p}}.
\end{align*}
The right side is clearly summable, with bounds depending on $\alpha, R$.  
\end{proof}

There is the question of the endpoint $\alpha = \tfrac d{\tilde p}-\tfrac d{p}$.  When $q \geq p$, it is possible to deduce, using the methods of this article, conditional results, but the exponents are typically worse than those in Proposition~\ref{P:off scaling}.   If $q < p$, the endpoint is false.  This can be seen by considering functions of the form $f = \sum 2^{kd/\tilde p} e^{ix_k \cdot \xi} f_k$, with the $x_k$ sufficiently widely separated, $\supp f_k \subseteq \{2^{-k-1} < |\xi| < 2^{-k}\}$, and the $f_k$ quasi-extremal in the sense that 
$$
\|\Lambda_P(|\nabla|)^{1/\tilde p'} \scriptE_P f_k\|_q \gtrsim \||\nabla|^\alpha f_k\|_p \sim 2^{-kd/\tilde p}.  
$$

By way of comparison, a scaling critical adjoint restriction theorem for elliptic hypersurfaces, $\scriptR^*(p_0 \to q_0)$, would imply (by H\"older and Theorem~\ref{T:main}) that for any compact $K \subseteq \R^d$, $q > q_0$, $p \geq \tilde p := (\tfrac{(d+2)q}d)'$, and $\alpha < \tfrac d{\tilde p}-\tfrac dp$,
\begin{equation} \label{E:off line holder}
\|\Lambda_P(\nabla)^{\frac1{\tilde p'}} \scriptE_P f\|_q \lesssim \||\xi|^\alpha f\|_p, \qquad f \in L^q(\R^d)\, \: \supp f \subseteq K.
\end{equation}
If we instead use the Lorentz space version of H\"older's inequality and argue as in the proof of Corollary~\ref{C:unweighted}, we would have \eqref{E:off line holder} for all $q > q_0$, $q \geq p \geq \tilde p$ and $\alpha \leq \tfrac d{\tilde p} - \tfrac dp$.  In both cases, the implicit constants in \eqref{E:off line holder} depend on $q,p,K,\alpha,q_0$, and the degree of $P$.

%%%%%%%%%%%%%%%%%%%%%%%%%%%%%%%%%%%%%%%%%%%%%%%
%%%%%%%%%%%%%%%%%%%%%%%%%%%%%%%%%%%%%%%%%%%%%%%
%%%%%%%%%%%%%%%%%%%%%%%%%%%%%%%%%%%%%%%%%%%%%%%

%%%%%%%%%%%%%%%%%%%%%%%%%%%%%%%%%%%%%%%%%%%%%%%
%%%%%%%%%%%%%%%%%%%%%%%%%%%%%%%%%%%%%%%%%%%%%%%
%%%%%%%%%%%%%%%%%%%%%%%%%%%%%%%%%%%%%%%%%%%%%%%

\end{document}